\def\ddefloop#1{\ifx\ddefloop#1\else\ddef{#1}\expandafter\ddefloop\fi}
\xpatchcmd{\paragraph}{\normalfont}{{\normalfont\bfseries}}{}{}
\def\ddef#1{\expandafter\def\csname bb#1\endcsname{\ensuremath{\mathbb{#1}}}}
\def\ddef#1{\expandafter\def\csname ff#1\endcsname{\ensuremath{\mathfrak{#1}}}}
\def\ddef#1{\expandafter\def\csname cc#1\endcsname{\ensuremath{\mathcal{#1}}}}
\newcommand{\pre}{\mathrm{pre}}
\newcommand{\Spec}{\mathrm{Spec}\,}
\newcommand{\Sch}{\mathrm{Sch}}
\newcommand{\op}{\mathrm{op}}
\newcommand{\Bl}{\mathrm{Bl}}
\newcommand{\reg}{\mathrm{reg}}
\newcommand{\ord}{\mathrm{ord}\,}
\renewcommand{\div}{\mathrm{div}\,} 
\newcommand{\scalar}[1]{\langle #1\rangle}
\newcommand{\Pic}{\mathrm{Pic}}
\newcommand{\p}{x}
\newcommand{\compp}{C_\p}
\newcommand{\vanishp}{V_\p}
\newcommand{\betap}{\beta_\p}
\newcommand{\betapq}{\beta_{\p,\q}}
\newcommand{\coord}{z}
\newcommand{\coordp}{z_\p}
\newcommand{\GIT}{/\!\!/}
\newcommand{\Hom}{\operatorname{Hom}}
\newcommand{\q}{{q}}
\newcommand{\qreg}{{q_\reg}} 
\newcommand{\qone}{{q_1}} 
\newcommand{\qtwo}{{q_2}} 
\newcommand{\qonereg}{{q_{1,\reg}}} 
\newcommand{\qtworeg}{{q_{2,\reg}}} 
\newcommand{\qpr}{{q'}} 
\newcommand{\gr}{\mathrm{gr}}
\newcommand{\qext}{{q_\gr}}
\newcommand{\Qpre}{{\ccQ_{g,n}^{\pre}}} 
\newcommand{\ibar}{{\ccQ(\iota)}} 
\newcommand{\imap}{{\overline{\ccM}(\iota)}} 
\newcommand{\Mgn}{{\overline{\ccM}_{g,n}}} 
\newcommand{\Qgn}{{\ccQ_{g,n}}} 
\newcommand{\css}{\overline{\ccM}^c(X)} 
\newcommand{\cssgn}{\overline{\ccM}^c_{g,n}(X,\beta)}
\newcommand{\Qs}{{\ccQ(s)}}
\newcounter{mainresults}
\newtheorem{theorem}[subsubsection]{Theorem}
\newtheorem{corollary}[subsubsection]{Corollary}
\newtheorem{lemma}[subsubsection]{Lemma}
\newtheorem{proposition}[subsubsection]{Proposition}
\newtheorem{maintheorem}[mainresults]{Theorem}
\theoremstyle{definition}
\newtheorem{definition}[subsubsection]{Definition}
\newtheorem{construction}[subsubsection]{Construction}
\newtheorem{mainconstruction}[mainresults]{Construction}
\newtheorem{maindefinition}[mainresults]{Definition}
\newtheorem{remark}[subsubsection]{Remark}
\newtheorem{example}[subsubsection]{Example}
\newtheorem{notation}[subsubsection]{Notation}
\title[The contraction morphism between maps and quasimaps]{The contraction morphism between maps and quasimaps to toric varieties}
\author[Alberto Cobos Rabano]{Alberto Cobos Rabano}
\address{Alberto Cobos Rabano, KU Leuven, Department of Mathematics, Celestijnenlaan 200B box 2400, BE-3001 Leuven, Belgium}
\email{alberto.cobosrabano@kuleuven.be}
\date{}
\begin{document}

\begin{abstract}
    Given $X$ a smooth projective toric variety, we construct a morphism from a closed substack of the moduli space of stable maps to $X$ to the moduli space of quasimaps to $X$. If $X$ is Fano, we show that this morphism is surjective. The construction relies on the notion of degree of a quasimap at a base-point, which we define. We show that a quasimap is determined by its regular extension and the degree of each of its basepoints.
\end{abstract}

\maketitle

\tableofcontents

\section{Introduction}

\subsection{Results}

To a smooth projective toric variety $X$ we can associate two moduli spaces: $\Mgn(X,\beta)$, the moduli of stable maps, and $\Qgn(X,\beta)$, the moduli of stable toric quasimaps. These moduli spaces are two different compactifications of the space of maps from smooth curves to $X$. We study a geometric comparison of these compactifications. Our main result is the following.

\begin{mainconstruction}[{\Cref{constr: c_X}}]\label{intro_constr: c_X}
    Let $X$ be a smooth projective toric variety. We construct a closed substack $\cssgn$ of $\Mgn(X,\beta)$ and a morphism of stacks 
    \[ 
        c_X\colon \cssgn \to \Qgn(X,\beta)
    \]
    extending the identity on the locus of maps from a smooth source.
\end{mainconstruction}

Our motivation comes from the comparison between Gromov--Witten and quasimap invariants of toric Fano varieties in enumerative geometry. 
For $X=\bbP^N$, the morphism $c_{\bbP^N}$, which is defined globally, agrees with the one used in \cite{stable_quotients} to compare the virtual fundamental classes of both spaces. 
More generally, $c_X$ is defined globally on $\Mgn(X,\beta)$ if all toric divisors in $X$ are nef.
\Cref{intro_constr: c_X} paves the way for a geometric comparison for toric varieties in any genus. The following result makes the Fano case easier to treat.

\begin{maintheorem}[\Cref{thm: surjectivity Fano}]\label{intro_thm: surjectivity Fano}
    Let $X$ be a smooth Fano toric variety. The contraction morphism
    \[
        c_X\colon \cssgn\to \ccQ_{g,n}(X,\beta)
    \]
   is surjective.
\end{maintheorem}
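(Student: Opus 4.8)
The plan is to exhibit, over an arbitrary geometric point, an explicit preimage; since surjectivity of a morphism of algebraic stacks may be checked on geometric points, it suffices to show that every geometric point $q$ of $\Qgn(X,\beta)$ lies in the image of $c_X$. Fix such a $q$, with prestable source $(C,p_1,\dots,p_n)$. By the results established above, $q$ has a regular extension $\qreg$, which is a prestable map $C\to X$ of some effective class $\beta_0$, together with finitely many basepoints $x_1,\dots,x_k\in C$ — smooth points of $C$ disjoint from the nodes and the markings — carrying well-defined degrees $\beta_1,\dots,\beta_k$, effective curve classes with $\beta_0+\sum_{i=1}^{k}\beta_i=\beta$. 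Set $y_i:=\qreg(x_i)\in X$.

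The crux is the following assertion, and it is the only place the Fano hypothesis is used: for each $i$ there is a genus-zero stable map $(T_i,q_i,f_i)$ to $X$ of class $\beta_i$, with a single marked point $q_i$, such that $f_i(q_i)=y_i$. Here one uses that the classes occurring as basepoint degrees are constrained — the contribution of a basepoint to $\langle D_\rho,\beta\rangle$ is an order of vanishing of a section, so $\langle D_\rho,\beta_i\rangle\ge 0$ for every torus-invariant divisor $D_\rho$ — together with the facts that $\mori(X)$ is generated by classes of torus-invariant curves and that $-K_X=\sum_\rho D_\rho$ is ample. From these one shows that each $\beta_i$ is a non-negative combination of classes of torus-invariant rational curves that are mobile enough to pass through an arbitrary point, i.e.\ have surjective evaluation morphism $\overline{\ccM}_{0,1}(X,\,\cdot\,)\to X$; gluing a chain of such curves — the first through $y_i$, each subsequent one attached at a point of intersection with the previous — realizes $\beta_i$ through $y_i$. (When all toric divisors are nef, in particular for $X=\bbP^N$, this step needs no input and $c_X$ is defined globally, as mentioned in the introduction.)

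Granting the assertion, glue $C$ to each $T_i$ by identifying $x_i$ with $q_i$; this yields a genus-$g$ prestable $n$-pointed curve $\widehat C$ with a map $\widehat f\colon\widehat C\to X$ restricting to $\qreg$ on $C$ and to $f_i$ on each $T_i$. Each tail $T_i$ has one node and positive class $\beta_i$, hence is not contracted; and a contracted rational component of $C$ is constant for $q$ as well, so by quasimap-stability it carries enough basepoints that, once the corresponding tails are attached, it has at least three special points — concretely, the basepoints, which break the automorphisms of the quasimap, become nodes, which break the automorphisms of the map. Thus $(\widehat C,p_\bullet,\widehat f)$ is a stable map of class $\beta$. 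Finally, the components of $\widehat C$ other than the $T_i$ are exactly those of $C$, on which the stable quasimap $q$ already lives; hence $c_X$ contracts precisely the tails $T_i$, and $c_X(\widehat C,p_\bullet,\widehat f)$ is a quasimap whose regular extension is $\qreg$ and whose basepoint at each $x_i$ has degree $(f_i)_*[T_i]=\beta_i$. By the statement that a quasimap is determined by its regular extension together with the degrees of its basepoints, this quasimap equals $q$; in particular $(\widehat C,p_\bullet,\widehat f)$ lies in $\cssgn$ and maps to $q$.

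The main obstacle is the assertion of the second paragraph — realizing an admissible basepoint degree by a rational curve through a prescribed point. This is the step that forces the Fano assumption, and the argument must be organized around the interplay between the combinatorial description of $\mori(X)$ by torus-invariant curves and the ampleness of the anticanonical divisor $-K_X=\sum_\rho D_\rho$.
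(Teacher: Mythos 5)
Your overall architecture---pass to the regular extension $\qreg$, attach at each basepoint $x_i$ a one-pointed genus-zero stable map of class $\beta_i$, and then use \Cref{prop: description css and c_X} and \Cref{lem: equality of quasimaps} to check that the glued map lies in $\cssgn$ and is sent to $q$---is sound and runs parallel to the paper's grafting construction; the gluing and stability bookkeeping is essentially fine. But the step you yourself single out as the crux is not proved, and the justification you sketch for it is wrong. You assert that $\beta_i\cdot D_\rho\ge 0$ for every toric divisor $D_\rho$ because these pairings are orders of vanishing; in fact one only has $\ord_{x_i}(s_\rho)\ge\beta_i\cdot D_\rho$, with equality for the rays outside a suitable maximal cone, and the pairing can be negative for the remaining rays. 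The paper's \Cref{tab: list betaps} gives counterexamples: for $X=\Bl_0\bbP^2$ the basepoint degree $\beta_i=E$ occurs, and $E\cdot D_3=E\cdot E=-1$. Consequently your proposed decomposition of $\beta_i$ into classes of torus-invariant curves with surjective evaluation $\overline{\ccM}_{0,1}(X,\cdot)\to X$ does not exist in general: for $\beta_i=E$ the only curve in that class is the exceptional curve, evaluation is not surjective, and no curve of class $E$ passes through a general point of $X$. So the assertion ``every admissible basepoint degree is realized by a rational tail through an arbitrary prescribed point'' is false as stated.

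What makes the theorem true is exactly the constraint you never invoke: if $\beta_i\cdot D_\rho<0$, then by \Cref{cor: expression regular extension} the extended section $s'_\rho$ vanishes at $x_i$, so $y_i=\qreg(x_i)$ is forced to lie on every such divisor $D_\rho$ (and satisfies finer vanishing conditions), and any correct version of your key claim must produce the tail through a point constrained in this way, not an arbitrary one. Proving such an existence statement head-on is essentially the entire content of the theorem, and the paper sidesteps it: \Cref{constr: main construction for surjectivity} grafts onto the basepoint a rational tail carrying only \emph{sections} of the prescribed degrees $\beta_i\cdot D_\rho$ compatible at the node (a quasimap, whose new basepoints sit on the tail), and then inducts on $\lambda(\beta)=-K_X\cdot\beta$; the Fano hypothesis, together with the wall/primitive-collection analysis and \Cref{lemma: key lemma,lem: exists nonvanishing section}, enters only in choosing the tail sections so that a basepoint of the full degree cannot reproduce itself and create an infinite regress. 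As written, your proposal rests its central step on a false inequality and otherwise leaves it unestablished, so it does not yet constitute a proof.
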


\Cref{intro_constr: c_X} relies on the contraction morphism for (products of) projective spaces. One idea would be to use an embedding $\iota\colon X\hookrightarrow \bbP^N$ and the functoriality of maps and quasimaps. This fails because the morphism $\ibar\colon \Qgn(X,\beta) \to \Qgn(\bbP^N,\iota_\ast \beta)$ is not a closed embedding in general, see \Cref{ex: quasimaps do not embed}, which is due to Ciocan-Fontanine and  which first appeared in \cite[Remark 2.3.3]{Battistella_Nabijou}. This situation motivates the following result. Let $A_1(X)$ denote the group of $1$-cycles on $X$ modulo rational equivalence.

\begin{maintheorem}[\Cref{cor: injective on A1 implies closed embedding}]\label{intro_cor: injective on A1 implies closed embedding}
    Let $\iota\colon X\to Y$ be a  closed embedding between smooth projective toric varieties. If $\iota_\ast \colon A_1(X)\to A_1(Y)$ is injective, then the morphism
    \[
        \ibar \colon \Qgn(X,\beta)\to \Qgn(Y,\iota_\ast \beta)
    \]
    is a closed embedding.
\end{maintheorem}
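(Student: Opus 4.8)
The plan is to prove that $\ibar$ is a proper monomorphism and then invoke the fact that a proper monomorphism of Deligne--Mumford stacks is a closed immersion (the classical statement for schemes passes to algebraic stacks). Properness is formal: $\Qgn(X,\beta)$ and $\Qgn(Y,\iota_\ast\beta)$ are both proper over the ground field, and $\ibar$ is a morphism over it whose target is separated, so $\ibar$ is proper by the cancellation property for proper morphisms. (Equivalently, one can check that $\ibar$ is universally injective and unramified --- for a morphism locally of finite type, being a monomorphism is equivalent to being universally injective and unramified --- and then combine this with the universal closedness that comes from properness of the two stacks.)

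For the monomorphism property, which is where the hypothesis on $A_1$ enters, I would use two functoriality statements for the pushforward $\ibar$ along the closed embedding $\iota$, both of which should follow directly from the construction of $\ibar$: (i) the regular extension of $\ibar(q)$ is obtained from the regular extension of $q$ by composing with $\iota$, so --- $\iota$ being a monomorphism --- the regular extension of $q$ is recovered from that of $\ibar(q)$; and (ii) the degree of $\ibar(q)$ at a point $x$ is $\iota_\ast$ applied to the degree of $q$ at $x$. From (ii) and the injectivity of $\iota_\ast$ one deduces that $\ibar$ neither creates basepoints (a genuine point of $q$ remains genuine after composing with $\iota$) nor destroys them (a basepoint of $q$ has nonzero degree, which stays nonzero under the injective map $\iota_\ast$), so $q$ and $\ibar(q)$ have the same basepoints. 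Now let $q,q'$ be quasimaps to $X$ over a test scheme $T$ together with an isomorphism $\ibar(q)\cong\ibar(q')$. Identifying the underlying curves through this isomorphism, $q$ and $q'$ have the same basepoints, the same regular extension by (i), and the same degree at each basepoint, since $\iota_\ast(\deg_x q)=\deg_x\ibar(q)=\deg_x\ibar(q')=\iota_\ast(\deg_x q')$ and $\iota_\ast$ is injective. By the result stated in the abstract, that a quasimap is determined by its regular extension together with the degrees of its basepoints, $q$ and $q'$ are isomorphic compatibly with the given isomorphism. Carrying out the same reasoning over arbitrary $T$, and applying it also to isomorphisms rather than just objects, shows that $\ibar(T)$ is fully faithful for every $T$, that is, $\ibar$ is a monomorphism. (Specializing to square-zero thickenings gives unramifiedness directly, for the alternative route.)

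I expect the main obstacle to be making the functoriality statements (i) and (ii) rigorous in families, and in particular pinning down how the homogeneous coordinates of a quasimap to $X$ are reconstructed from those of its image in $Y$. This reconstruction genuinely uses the sections, not merely the restriction map $\iota^\ast\colon\Pic(Y)\to\Pic(X)$: already for the toric embedding of a conic $\bbP^1\hookrightarrow\bbP^2$ the map $\iota^\ast$ is multiplication by $2$ and hence not surjective, yet a quasimap to $\bbP^1$ must still be recoverable from its image. Once the rigidity statement from the abstract is in hand, what remains is to verify that pushing a quasimap forward along a closed toric embedding interacts with basepoints, regular extensions and degrees exactly as claimed, at which point injectivity of $\iota_\ast$ does the rest.
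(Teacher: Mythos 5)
Your proposal is correct and follows essentially the same route as the paper: properness by cancellation (both moduli stacks are proper over $\bbC$ and the target is separated), plus the monomorphism property deduced from exactly the ingredients the paper uses, namely compatibility of the regular extension with $\iota$ (\Cref{lem: commutativity diagram embedding and extension}), compatibility of the degree of a basepoint with $\iota_\ast$ (\Cref{lem: pushforward degree of a basepoint}), preservation of the set of basepoints (\Cref{lem: basepoints of ibar q}), and the rigidity statement that a quasimap is determined by its regular extension and its basepoint degrees (\Cref{lem: equality of quasimaps}). The one genuine difference is how the pointwise statement is promoted to a monomorphism: you propose to verify full faithfulness of $\ibar(T)$ over an arbitrary base $T$, and you flag the family versions of (i) and (ii) as the main obstacle, whereas the paper sidesteps this entirely. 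Since $\ibar$ is locally of finite type, it suffices (Stacks Project, Lemma 05VH, the criterion you implicitly invoke with the ``universally injective and unramified'' remark) to check that every geometric fibre of $\ibar$ is empty or a single point; the paper does this by packaging the pointwise analysis into \Cref{thm: fibres of ibar}, which identifies the fibre over a quasimap $\q$ to $Y$ with the set of tuples of effective classes $(\beta^\p)_{\p\in B_\q}$ satisfying $\iota_\ast(\beta^\p)=\beta_{\p,\q}$ and the order conditions, so injectivity of $\iota_\ast$ gives at most one point per fibre. Adopting that reduction would dissolve the difficulty you anticipate, since the rigidity argument then only has to be run over a field; the paper's fibre theorem moreover records the existence conditions, which are reused later to describe the domain of the contraction morphism, while your argument only needs the uniqueness half.
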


\Cref{intro_cor: injective on A1 implies closed embedding} leads to \Cref{intro_constr: c_X} following the strategy previously sketched. More precisely, we replace the embedding into projective space by an embedding $\iota\colon X\hookrightarrow \bbP$, with $\bbP = \bbP^{N_1}\times \ldots \times \bbP^{N_s}$, such that $\iota_\ast$ is injective on curve classes (we call embeddings with this property \textit{epic}, see \Cref{prop: equivalence epic morphisms}). Such an embedding exists by \Cref{prop: embedding with surjection on Pic}. With this we obtain a diagram
\[
\begin{tikzcd}\Mgn(X,\beta)\arrow[hookrightarrow]{r}{\imap}\arrow[dashed,d,"c_X"]&\Mgn(\bbP^{n_1}\times \ldots \times \bbP^{n_k},\iota_\ast\beta)\arrow[d,"c_{\bbP}"]\\
\Qgn(X,\beta)\arrow[hookrightarrow]{r}{\ibar}&\Qgn(\bbP^{n_1}\times \ldots \times \bbP^{n_k},i_*\beta).
\end{tikzcd}
\]
The dashed vertical arrow is defined on the locus of maps in $\Mgn(X,\beta)$ for which $c_\bbP\circ \imap$ factors through the image of $\ibar$, that is on
\[
    \cssgn \coloneqq \Qgn(X,\beta) \times_{\Qgn(\bbP,\iota_\ast \beta)} \Mgn(\bbP,\iota_\ast \beta).
\]

Most of the paper relies, directly or indirectly, on the notion of degree of a basepoint.

\begin{maindefinition}[\Cref{def: degree of a basepoint}]\label{intro_def: degree of a basepoint}
    Let $\q$ be a quasimap to a smooth projective toric variety $X$ and let $\p$ be a nonsingular point of the source. We construct a class $\betap \in A_1(X)$ called the degree of the quasimap $\q$ at the point $\p$.
\end{maindefinition}

The degree $\betap$ is constructed combinatorially in \Cref{proposition: existence choice of sigma general case} and completely characterized in \Cref{prop: uniqueness of degree of a basepoint}. We give a geometric interpretation, under certain assumptions, in \Cref{rmk: geometric_interpretation_degree}. It
recovers the notion of length of a basepoint in \cite[Definition 7.1.1]{C-FKM} (for toric quasimaps), see \Cref{prop: degree_generalizes_length}. We show in \Cref{lem: equality of quasimaps} that a toric quasimap is determined by its regular extension (\Cref{def: regular extension}) and the degree of each of its basepoints. The degree of a basepoint is essential in \Cref{intro_cor: injective on A1 implies closed embedding} and in the proof of \Cref{intro_thm: surjectivity Fano}.

\subsection{Context}

The moduli space $\Qgn(X,\beta)$ of stable toric quasimaps was introduced in \cite{C-FK}, in relation to \cite{Givental_mirror_theorem, stable_quotients}. These constructions have been further generalized to include quasimaps to more general GIT quotients \cite{C-FKM}, stacks \cite{cheong2015orbifold}, and a stability parameter \cite{ciocan2013wall, toda2011moduli}.

Quasimaps provide a compactification of the space of maps from smooth curves to $X$. The advantage over other compactifications relies in the fact that quasimap invariants, or more precisely the associated $I$-function, are often computable \cite{cooper2014mirror,ciocan2016big,kim2018mirror,ciocan2020quasimap, Battistella_Nabijou}. Quasimaps have also been successfully used to prove results concerning Gromov--Witten invariants \cite{lho2018stable, fan2017mathematical,CRMS}.

The contraction morphism $c_X$ from \Cref{intro_constr: c_X} appeared in \cite{stable_quotients} in the case $X=\bbP^N$ and in \cite{C-FK} for moduli spaces of maps and quasimaps with one parametrized component. A similar picture for Grassmannians appears in \cite{popa_roth, Manolache}, where the authors show that the natural contraction morphism does not extend to the whole moduli space of stable maps. 

The relation between Gromov--Witten invariants and quasimap invariants has been proved in many situations: via localization for projective spaces \cite{stable_quotients} and for complete intersections in projective spaces \cite{clader2017higher} and via wall-crossing \cite{ciocan2017higher, ciocan2020quasimap}. In particular, Gromov-Witten and quasimap invariants of a smooth Fano toric variety agree \cite{ciocan2017higher}. 

Our motivation is to use \Cref{intro_constr: c_X} to prove this statement in a more geometric way as in \cite{manolache2012virtual, Manolache} and at the level of derived structures, extending \cite{kern2022derived}. Besides giving a better geometric understanding of the map-to-quasimap wall-crossing, we hope such a result would contribute to the development of techniques that relate virtual classes of moduli spaces which are ``virtually birational'' but not necessarily birational.

\subsection{Outline of the paper}

We summarize the content of each section, highlighting the main results.

In \Cref{sec: background quasimaps} we review standard content of toric geometry and toric quasimaps while introducing our notations.
 
The main purpose of \Cref{sec: basepoint class} is to define the degree of a basepoint in \Cref{def: degree of a basepoint}. Before that, we construct it combinatorially in \Cref{proposition: existence choice of sigma general case}. The importance of the definition is exhibited by its geometric characterization in \Cref{prop: uniqueness of degree of a basepoint}. We collect basic properties of the degree of a basepoint in \Cref{prop: properties degree of a basepoint} and we show that it recovers the length of a point in \Cref{prop: degree_generalizes_length}.
 
We present the main problem of \Cref{sec: embeddings of toric varieties} in  \Cref{ex: quasimaps do not embed}: given a closed embedding $\iota$ between smooth projective toric varieties, the induced morphism $\ibar$ between quasimap spaces may not be a closed embedding. We identify that the property of $\ibar$ being a monomorphism is related to the induced morphism $\iota_\ast$ on the Chow group $A_1$ being injective. This observation motivates the notion of \textit{epic} morphism, which we discuss in more generality in \Cref{subsec: epic morphisms}. The main result of this subsection is \Cref{thm: toric is projectively epic}: every smooth projective toric variety admits an epic closed embedding into a product of projective spaces. In \Cref{subsec: criterion_qmaps_embedd} we show that if $\iota$ is epic, then $\ibar$ is a closed embedding (\Cref{cor: injective on A1 implies closed embedding}). The proof is as follows: first, we show (\Cref{lem: equality of quasimaps}) that a quasimap is determined by its regular extension (introduced in \Cref{def: regular extension}) and the degree of each of its basepoints. This is used in \Cref{thm: fibres of ibar} to characterize the fibres of $\ibar$, from which \Cref{cor: injective on A1 implies closed embedding} follows. 

In \Cref{sec: contraction}, we 
construct a contraction morphism from a closed substack of the moduli space $\Mgn(X,\beta)$ to $\Qgn(X,\beta)$  (\Cref{constr: c_X}). 
We describe explicitly the contraction morphism and the locus where it is defined in \Cref{prop: description css and c_X}, at the level of closed points.
	
In \Cref{sec: surjectivity Fano}, we show that the contraction morphism is surjective for Fano targets (\Cref{thm: surjectivity Fano}), and more generally for $X$ satisfying the condition in \Cref{rmk: relax Fano in surjectivity}. The proof relies on the notion of degree of a basepoint (\Cref{def: degree of a basepoint}).

\subsection{Acknowledgements}

I am very thankful to Cristina Manolache for posing this question and for the numerous discussions we have had about this topic. I would also like to thank Samuel Johnston, Jingxiang Ma, Etienne Mann, Navid Nabijou, Luis Manuel Navas Vicente and Menelaos Zikidis for helpful discussions and  Navid Nabijou, Dhruv Ranganathan and Evgeny Shinder for their comments on previous drafts.

The content of this paper is based on the author's PhD Thesis \cite[Chapter 2]{Cobos_thesis}. The author was supported by Fonds Wetenschappelijk Onderzoek (FWO) with reference G0B3123N.

\section{Background}\label{sec: background quasimaps}

\subsection{Toric varieties} \label{background: toric varieties}

Let $X$ be a toric variety, that is, a normal variety over $\bbC$ containing a torus $T$ as a dense open subset and such that the natural group action of $T$ extends to an action of $T$ on $X$. The character and co-character lattices of $T_X$ are denoted by $M_X$ and $N_X$ respectively. We will omit the subindex $X$ if there is no risk of confusion.

Given a fan $\Sigma$, we denote by $\Sigma(k)$ and by $\Sigma_{\max}$ the sets of $k$-dimensional cones and of maximal cones in $\Sigma$, respectively. Given a cone $\sigma$, we denote by $\sigma(k)$ the set of its $k$-dimensional faces. Elements in $\Sigma(1)$ are called \textit{rays of $\Sigma$} and will typically be denoted by the letters $\rho$ or $\tau$. To each ray $\rho$ we can associate a divisor $D_\rho^X$ in $X$ and an element $u_\rho \in N$, the unique generator of the semigroup $\rho\cap N$.

Recall that $X$ is proper if and only if $\cup_{\sigma \in \Sigma} \sigma = N\otimes_{\bbZ} \bbR$, and that $X$ is smooth if and only if for each cone $\sigma \in \Sigma$ it holds that the set $\{u_\rho \colon \rho \in \sigma(1)\}$ is part of a $\bbZ$-basis of $N$. The canonical divisor of $X$ can be written as $K_X = -\sum_{\rho \in \Sigma(1)} D_\rho^X$. We say that a smooth toric variety $X$ is Fano if the anticanonical divisor $-K_X$ is ample. 
If $X$ is smooth, Fano and proper, then $-K_X$ is very ample by \cite[Theorem 6.1.15]{CLS}, in particular $X$ is projective. In that case, we refer to the closed embedding $X\to \bbP(H^0(-K_X))$ as the \textit{anticanonical embedding} of $X$.

For a (not necessarily toric) normal variety $X$, we denote by $A_k(X)$ the group of $k$-dimensional cycles on $X$ modulo rational equivalence. If $X$ is smooth of dimension $n$, intersection product induces a perfect pairing
\[
    A_k(X) \times A_{n-k}(X) \to \bbZ.
\]
In that case, we denote $A^k(X) \coloneqq A_{n-k}(X)$. In particular, $A_1(X)$ is dual to $A^1(X)$, which is isomorphic to the group of Cartier divisors modulo principal divisors and also isomorphic to the Picard group $\Pic(X)$ of isomorphisms classes of invertible sheaves on $X$.

Let $X$ be a smooth proper toric variety. The ring $S^X = \bbC[z_\rho]_{\rho\in\Sigma(1)}$ is called the total coordinate ring or the Cox ring of $X$. It has a natural $\Pic(X)$-grading by declaring that $\deg(z_\rho) = [D_\rho^X]\in \Pic(X)$. Such $X$ can be written as a geometric quotient
\begin{equation}\label{equation:GIT_presentation}
    X \simeq   (\bbA^{\Sigma_X(1)} \setminus Z(\Sigma_X))\GIT G_X,
\end{equation}
where $\bbA^{\Sigma_X(1)} = \Spec(S^X)$, where $G_X=\Hom(A^{1}(X),\bbC^*)$ and where $Z(\Sigma_X)$ is the zero set of the ideal
\[
        B(\Sigma_X) = \left\langle z^{\hat{\sigma}} \colon \sigma\in\Sigma_X(\dim X) \right\rangle \subset S^X
\]
with 
    \[
        z^{\hat{\sigma}} = \prod_{\rho\notin\sigma(1)} z_\rho \in S^X
    \]
for each cone $\sigma \in \Sigma_X$. 
A subset $\ccP$ of $\Sigma_X(1)$ is a \textit{primitive collection} if $\ccP$ is not contained in any cone of $\Sigma_X$, but each proper subset of $\ccP$ is contained in some cone of $\Sigma_X$. The irreducible decomposition of $Z(\Sigma_X)$ is
\begin{equation}\label{eq: Z Sigma primitive collections}
    Z(\Sigma_X) = \bigcup_{\ccP} V(z_\rho \colon \rho \in \ccP),
\end{equation}
where the union is over all the primitive collections $\ccP$ in $\Sigma_X(1)$.

\subsection{Morphisms to a smooth toric variety}

The functor of points of a smooth toric variety can be described in terms of line bundle-section pairs due to \cite{Cox_functor}. 

Let $X_\Sigma$ be a smooth toric variety with fan $\Sigma$ in a lattice $N$ and let $M=\operatorname{Hom}_\bbZ(N,\bbZ)$. 

\begin{definition}\label{def: sigma-collection}
    A $\Sigma$-collection on a scheme $S$ is a triplet
    \[
        (L_\rho)_{\rho\in \Sigma(1)}, (s_\rho)_{\rho\in \Sigma(1)}, (c_m)_{m\in M}
    \] 
    of line bundles $L_\rho$ on $S$, sections $s_\rho \in H^0(S,L_\rho)$ and isomorphisms
    \[
        c_m\colon \otimes_{\rho\in\Sigma(1)} L_\rho^{\otimes \langle m,u_\rho\rangle} \simeq \ccO_S 
    \]
    satisfying the following conditions:
    \begin{enumerate}
        \item compatibility: $c_m\otimes c_{m'} = c_{m+m'}$ for all $m,m'\in M$,
        \item non-degeneracy: for each $\p\in S$ there is a maximal cone $\sigma\in\Sigma$ such that $s_\rho(\p)\neq 0$ for all $\rho\not\subset \sigma$.
    \end{enumerate}
    An equivalence between two $\Sigma$-collections on $S$ is a collection of isomorphisms among  the line bundles that preserve the sections and trivializations.
\end{definition}

Consider the functor
\[
    \ccC_\Sigma\colon \Sch^{\op} \to \mathrm{Set}
\]
that associates to each scheme $S$ the collection of $\Sigma$-collections on $S$ up to equivalence, with morphisms naturally defined by pull-back. Then \cite[Thm 1.1]{Cox_functor} states that $\ccC_\Sigma$ is represented by $X_\Sigma$, with universal family the $\Sigma$-collection on $X_\Sigma$ that consists of the line bundles $\ccO_{X_\Sigma}(D_\rho)$ with their natural sections and trivializations $c_m$ given by the characters $\chi^m$ of the dense torus in $X_\Sigma$.

\subsection{Toric quasimaps}

The previous interpretation of the functor of points of a smooth toric variety was used in \cite{C-FK} to define the notion of quasimaps to a toric variety, by relaxing the non-degeneracy condition above.

\begin{definition}(\cite[Definition 3.1.1]{C-FK})\label{def: quasimap}
    Let $X_\Sigma$ be a smooth projective toric variety. A \textit{(prestable toric) quasimap} $\q$ to $X_\Sigma$ consists of
    \begin{enumerate}
        \item a connected nodal projective curve $C$ of genus $g$ with $n$ distinct non-singular markings,
        \item line bundles $L_\rho$ on $C$ for $\rho\in\Sigma(1)$,
        \item sections $s_\rho\in H^0(C,L_\rho)$ for $\rho\in\Sigma(1)$ and 
        \item trivializations $c_m\colon \otimes_{\rho\in\Sigma(1)} L_\rho^{\otimes \langle m,u_\rho\rangle} \simeq \ccO_C$ for $m\in M$
    \end{enumerate}
    satisfying
    \begin{enumerate}
        \item compatibility: $c_m\otimes c_{m'} = c_{m+m'}$ for all $m,m'\in M$,
        \item quasimap non-degeneracy: there is a finite (possible empty) set $B\subseteq C$ of nonsingular points disjoint from the markings on $C$, such that for every $\p\in C\setminus B$ there is a maximal cone $\sigma\in\Sigma$ such that $s_\rho(\p)\neq 0$ for all $\rho\not\subset \sigma$.
    \end{enumerate}
    We will write $\q = (C,L_\rho, s_\rho, c_m)$ or $\q\colon C \dashrightarrow X$ to denote a quasimap $\q$.
\end{definition}

Points in the set $B = B_\q$ in \Cref{def: quasimap} are called \textit{basepoints} of the quasimap. A toric quasimap defines a regular morphism $C\setminus B \to X_\Sigma$. In fact, \cite[Thm 1.1]{Cox_functor} shows that, locally, $\q$ can be lifted to a morphism to $\bbA^{\Sigma_X(1)}$ which factors through $\bbA^{\Sigma_X(1)} \setminus Z(\Sigma_X)$ exactly along $C\setminus B$ (see \Cref{equation:GIT_presentation}).

\begin{definition}\label{def: degree quasimap}
     The degree of a toric quasimap $\q= (C, L_\rho, s_\rho, c_m)$ is the class $\beta=\beta_C\in A_1(X_\Sigma)$ determined by the conditions
    \begin{equation}\label{eq: def degree}
        \beta\cdot D_\rho = \deg(L_\rho)
    \end{equation}
    for all $\rho\in\Sigma(1)$. Similarly, we associate a curve class $\beta_{C'} = \beta_{C',\q}\in A_1(X_\Sigma)$  to every irreducible component $C'$ of $C$ by replacing $\deg(L_\rho)$ with $\deg(L_\rho\mid_{C'})$ in \Cref{eq: def degree}. By \cite[Lemma 3.1.3]{C-FK}, $\beta_{C'}$ is effective.
\end{definition}

If $\q$ has no basepoints, this agrees with the usual notion of degree of a map. However, in the presence of basepoints, the degree of a quasimap does not agree with the degree of its associated regular map by \Cref{prop: properties degree of a basepoint}. See \Cref{ex: extension has different degree}.

\subsection{Moduli space of stable toric quasimaps}

The notion of stability for families of toric quasimaps was introduced in \cite{C-FK}. 

\begin{definition}\label{def: quasimap stability}
    Let $X_\Sigma$ be a smooth projective toric variety and let  $\{\alpha_\rho\}_{\rho\in\Sigma(1)}$ such that $L=\otimes_\rho \ccO_{X_\Sigma}(D_\rho)^{\otimes \alpha_\rho}$ is a very ample line bundle on $X_\Sigma$. Then a toric quasimap $\q=(C, L_\rho, s_\rho,c_m)$ to $X_\Sigma$ is \textit{stable} if the line bundle
    \[
        \omega_C(p_1+\ldots+p_n) \otimes \ccL^\epsilon
    \]
    is ample for all $\epsilon \in \bbQ_{>0}$, where $\ccL = \otimes_\rho L_\rho^{\otimes \alpha_\rho}$ and $p_i$ denote the marked points in $C$.
\end{definition}

\begin{remark}
    The notion of a quasimap being stable is independent of the polarization chosen in \Cref{def: quasimap stability} by \cite[Lemma 3.1.3]{C-FK}. Also, stability imposes the inequality $2g-2+n \geq 0$, which we assume from now on.
\end{remark}

\begin{definition}(\cite[Definition 3.1.7]{C-FK})\label{def: family of quasimaps}
    A \textit{family of genus $g$ stable toric quasimaps to $X_\Sigma$ of class $\beta$ over $S$} is 
    \begin{enumerate}
        \item a flat, projective morphism $\pi\colon\ccC\to S$ of relative dimension one,
        \item sections $p_i\colon S \to \ccC$ of $\pi$ for $1\leq i\leq n$,
        \item line bundles $L_\rho$ on $\ccC$ for $\rho\in\Sigma(1)$,
        \item sections $s_\rho\in H^0(\ccC,L_\rho)$ for $\rho \in \Sigma(1)$ and 
        \item compatible trivializations $c_m\colon \otimes_{\rho\in\Sigma(1)} L_\rho^{\otimes \langle m,u_\rho\rangle} \simeq \ccO_\ccC$ for $m\in M$
    \end{enumerate}
    such that the restriction of the data to every geometric fibre of $\pi$ is a stable toric quasimap of genus $g$ and class $\beta$.
\end{definition}

\begin{theorem}(\cite[Theorems 3.2.1, 4.0.1]{C-FK})
    The moduli space $\Qgn(X_\Sigma,\beta)$ of stable quasimaps of degree $\beta$ to a smooth projective toric variety $X_\Sigma$ from genus-$g$ $n$-marked curves is a proper Deligne--Mumford stack, of finite type over $\bbC$.
\end{theorem}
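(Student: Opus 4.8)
The plan is to exhibit $\Qgn(X_\Sigma,\beta)$ as a locally closed substack of an algebraic stack built over the stack of prestable curves, and then to verify boundedness (giving finite type), finiteness of automorphisms (giving the Deligne--Mumford property), and the valuative criterion (giving properness). For algebraicity, first forget the stability and quasimap non-degeneracy conditions and work over the smooth Artin stack $\ffM_{g,n}$ of prestable $n$-marked genus-$g$ curves, with universal curve $\pi\colon\ccC\to\ffM_{g,n}$. The tuples $(L_\rho)_{\rho\in\Sigma(1)}$ are parametrized by a finite fibre product of copies of the relative Picard stack $\mathfrak{Pic}(\ccC/\ffM_{g,n})$, which is algebraic; the condition that the restriction of $L_\rho$ to each component of a geometric fibre have degree $\beta'\cdot D_\rho$, for an effective decomposition $\beta=\sum\beta'$ — of which there are finitely many, since $\Eff(X_\Sigma)=\mori(X_\Sigma)$ is a finitely generated rational polyhedral cone — is open and closed. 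Over this stack the sections $s_\rho$ form the total space of the linear functor $\pi_\ast L_\rho$, an abelian cone affine over the base, and the trivializations $c_m$, for $m$ in a finite generating set of $M$ and subject to compatibility, impose the locally closed condition that each $\otimes_\rho L_\rho^{\otimes\langle m,u_\rho\rangle}$ be pulled back from the base, together with a $\bbG_m$-torsor of choices modulo the relations in $M$. (Equivalently, a toric quasimap is a representable morphism from a prestable curve to the quotient stack $[\,\bbA^{\Sigma_X(1)}/G_X\,]$ carrying the generic point of each component into the open $X_\Sigma$, and one may invoke the general algebraicity of Hom-stacks.) Descent for this data is routine, the diagonal reduces to Isom-schemes of curves together with matching of line-bundle--section data and is thus representable, quasi-compact and separated, and quasimap non-degeneracy and stability are open conditions, so $\Qgn(X_\Sigma,\beta)$ is algebraic.

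Finite type reduces to boundedness. On a stable quasimap the polarization $\ccL=\otimes_\rho L_\rho^{\otimes\alpha_\rho}$ has fixed total degree $d=\beta\cdot L$, and ampleness of $\omega_C(p_1+\ldots+p_n)\otimes\ccL^\epsilon$ for all $\epsilon>0$ forces $\deg(\ccL\mid_{C'})>0$ on every component $C'$ where $\omega_C(p_1+\ldots+p_n)$ fails to be ample, i.e.\ on every rational tail or bridge with at most two special points. Since the component classes are effective and $L$ is very ample, each such component contributes at least $1$ to $d$, so there are at most $d$ of them; bounding also the $\omega$-stable part of $C$, the number of components and nodes of $C$ is bounded. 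Curves of bounded genus with boundedly many components form a bounded family, line bundles of bounded multidegree on them form a bounded family, and their sections have bounded $h^0$ by Riemann--Roch for nodal curves; hence $\Qgn(X_\Sigma,\beta)$ is of finite type over $\bbC$. For the Deligne--Mumford property it then suffices, in characteristic $0$, to bound geometric automorphism groups: an automorphism preserves $(C,p_i)$ and $\ccL$ up to isomorphism, and on a component with positive-dimensional automorphisms (a $\bbP^1$ with at most two special points) stability gives $\deg(\ccL\mid_{C'})>0$, so the $s_\rho\mid_{C'}$ are not all constant and the induced morphism to $X_\Sigma$ is non-constant, rigidifying the automorphisms of that component to a finite group.

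Properness I would check by the valuative criterion over a DVR $R$ with fraction field $K$, which is legitimate now that the stack is noetherian. Given a stable quasimap over $\Spec K$, first perform stable reduction of the underlying curve to obtain, after a finite extension of $R$, a prestable family $\ccC\to\Spec R$ with regular total space; extend each $L_\rho$ to a line bundle on $\ccC$ (possible since $\ccC$ is regular), with the freedom of twisting by components of the central fibre $\ccC_0$; choose these twists so that the rational sections $s_\rho$ extend to honest sections with no $s_\rho$ vanishing on a whole component of $\ccC_0$ and so that quasimap non-degeneracy holds on $\ccC_0$; observe that the trivialization constraints amount to the line bundles $\otimes_\rho L_\rho^{\otimes\langle m,u_\rho\rangle}$ being trivial on $\ccC$, which the same choice of twists can be arranged to satisfy simultaneously — a linear bookkeeping over the dual graph of $\ccC_0$, consistent because the class $\beta$ is prescribed; finally contract the destabilizing rational tails and bridges of $\ccC_0$ on which $\ccL$ has degree $0$, using that such a contraction exists, is unique and carries the remaining data, to reach a stable limit. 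Uniqueness of the limit (separatedness) follows because any two extensions differ by a modification of $\ccC_0$ that the stability and degree constraints force to be trivial. I expect this last point — arranging the line bundles, the sections, and all the trivializations to extend simultaneously to a non-degenerate and stable central fibre — to be the main obstacle: it is the toric analogue of stable reduction for maps, complicated by having $|\Sigma(1)|$ line bundles tied together by the characters of $M$.
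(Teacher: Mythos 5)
Note first that the paper does not prove this statement at all: it is quoted verbatim from \cite[Theorems 3.2.1, 4.0.1]{C-FK}, so the only comparison available is with that reference, whose overall architecture (quasimap data as line bundles, sections and trivializations over the stack of prestable curves / maps to $[\bbA^{\Sigma_X(1)}/G_X]$; boundedness from the fixed class $\beta$ and the finitely generated effective cone; finiteness of automorphisms; valuative criterion) your sketch does follow in outline.

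There are, however, genuine gaps in the two parts that carry the real content. For the Deligne--Mumford property you argue that on a degree-positive component with only two special points ``the $s_\rho\mid_{C'}$ are not all constant and the induced morphism to $X_\Sigma$ is non-constant''. This is false for quasimaps: a basepoint can absorb all of the degree, so $\deg(\ccL\mid_{C'})>0$ is perfectly compatible with the induced regular map being constant (this is exactly the phenomenon of \Cref{ex: extension has different degree}; e.g.\ a bridge carrying $\ccO(1)$ with sections $(y,0,cy)$ maps constantly to $\bbP^2$). Finiteness of automorphisms is still true, but it has to be extracted from the full section data --- the positions and multiplicities of the vanishing divisors/basepoints together with non-degeneracy at the two nodes --- which is precisely the non-trivial lemma in \cite{C-FK}; your one-line justification does not give it. (Relatedly, quasimap stability, being required for \emph{all} $\epsilon>0$, excludes rational tails altogether rather than forcing $\deg\ccL>0$ on them; this is harmless for boundedness but the statement as written is off.) For properness, what you give is a plan, not a proof: extending the $\lvert\Sigma(1)\rvert$ line bundles on a regular model, choosing a single system of twists by vertical divisors that simultaneously makes all sections extend, keeps non-degeneracy and the prescribed class $\beta$ on the central fibre, respects all trivializations $c_m$, and then proving uniqueness of the resulting limit, is exactly the content of \cite[Theorem 4.0.1]{C-FK}; ``a linear bookkeeping over the dual graph'' and the one-sentence separatedness claim do not establish it, as you yourself acknowledge. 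So the proposal is a reasonable roadmap but does not constitute a proof of the cited theorem.
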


The moduli space $\Mgn(X_\Sigma,\beta)$ of stable maps of degree $\beta$ from genus-$g$ $n$-marked curves to a smooth projective toric variety $X_\Sigma$ can be recovered similarly. We define a \textit{(prestable) map} to $X_\Sigma$ to be a prestable toric quasimap (\Cref{def: quasimap}) whose set of basepoints $B$ is empty. Then, we impose the following stability condition for maps (which is different from \Cref{def: quasimap stability}):
\[
        \omega_C(p_1+\ldots+p_n) \otimes \ccL^2
 \]
is ample. As a result,  every map is a quasimap, but stability need not be preserved. Note that both stability conditions are defined to ensure that the objects have finitely-many automorphisms, so that $\Qgn(X_\Sigma,\beta)$ and $\Mgn(X_\Sigma,\beta)$ are Deligne-Mumford stacks.

\subsection{Functoriality of quasimaps}\label{subsec: functoriality quasimaps}

We fix a (not necessarily toric) closed embedding $\iota\colon X\to Y$ between smooth projective toric varieties for the rest of \Cref{subsec: functoriality quasimaps}. The goal of this subsection is to describe the morphism
\[
    \ibar: \Qgn(X,\beta)\to \Qgn(Y,\iota_\ast \beta) 
\]
associated to $\iota$. We follow \cite[Appendix B]{Battistella_Nabijou_v1}.\\

We denote rays in the fan $\Sigma_X$ of $X$ by $\rho$ and rays in the fan $\Sigma_Y$ of $Y$ by $\tau$. 
By \cite[Theorem 3.2]{Cox_functor}, the morphism $\iota$ corresponds to a collection of homogeneous polynomials $P_\tau \in S^X$ for $\tau \in \Sigma_Y(1)$ of degree $d_\tau \coloneqq \iota^\ast \ccO_Y(D^Y_\tau) \in \Pic(X)$ satisfying the following conditions:
\begin{enumerate}
    \item $\sum_{\tau \in \Sigma_Y(1)} \beta_\tau \otimes u_\tau = 0$ in $\Pic(Y)\otimes N$.
    \item $(P_\tau(x_\rho))\notin Z(\Sigma_Y)$ in $\bbA^{\Sigma_Y(1)}$ for every $(x_\rho)\notin Z(\Sigma_X)$ in $\bbA^{\Sigma_X(1)}$.
\end{enumerate}
The relation between $\iota$ and $(P_\tau)$ is that the morphism 
\[
    \tilde{\iota}\colon \bbA^{\Sigma_X(1)}\setminus Z(\Sigma_X) \to \bbA^{\Sigma_Y(1)}\setminus Z(\Sigma_Y) 
\]
defined by $\tilde{\iota}(x_\rho) \coloneqq (P_\tau(x_\rho))$ is a lift of $\iota$.

For $\tau \in \Sigma_Y(1)$, we write the polynomial $P_\tau$ as follows:
\begin{equation}\label{eq: expression_polys}
    P_\tau (x_\rho) = 
    \sum_{\underline{a}} P_\tau^{\underline{a}} (t_\rho) =
    \sum_{\underline{a}} \mu_{\underline{a}} \prod_{\rho\in \Sigma_X(1)} x_\rho^{a_\rho},
\end{equation}
where the sum is over a finite number of indices $\underline{a} = (a_\rho) \in \bbN^{\Sigma_X(1)}$ and the coefficients $\mu_{\underline{a}}$ are non-zero. We choose a multi-index $\underline{a}$ apprearing in \Cref{eq: expression_polys} and denote it by $\underline{a}^\tau$.

\begin{construction}\label{constr: functoriality_qmaps}
The morphism  
\[
    \ibar: \Qgn(X,\beta)\to \Qgn(Y,\iota_\ast \beta) 
\]
associates to a family of quasimaps 
\[
    (C, L_\rho, s_\rho, c_{m_X})
\]
in $\Qgn(X,\beta)$ the quasimap 
\[
    (C, L'_\tau, s'_\tau, c'_{m_Y})
\]
described as follows:
\begin{equation}\label{eq: functoriality_qmaps}
    L'_\tau = \bigotimes_{\rho\in\Sigma_X(1)} L_\rho^{\otimes a_{\rho}^\tau}, \quad
    s'_\tau = \mu_{\underline{a}^\tau} \prod_{\rho\in\Sigma_X(1)} s_\rho^{a_\rho^\tau}, \quad
    c'_{m_Y} = c_{m_X},
\end{equation}
where $m_X \in M_X$ is uniquely determined by the conditions
\[
    \scalar{m_X,u_\rho} = \sum_{\tau\in \Sigma_Y(1)} a_\rho^\tau \scalar{m_Y,u_\tau}
\]
for every $\rho\in\Sigma_X(1)$
\end{construction}

\begin{remark}\label{rmk: no_stabilization}
    In \Cref{constr: functoriality_qmaps}, the underlying curve $C$ is the not altered because $\iota$ is a closed embedding, thus no stabilization is required. The general case, where $\iota$ is not a closed embedding, is discussed in \cite[Appendix B]{Battistella_Nabijou_v1}.
\end{remark}

\begin{remark}\label{rmk: functoriality_same_maps_qmaps}
    Let $\Qpre(X,\beta)$ denote the moduli stack of $n$-marked genus-$g$  (prestable toric) quasimaps to $X$ of class $\beta$. The description of $\ibar$ in \Cref{constr: functoriality_qmaps} in terms of line bundle-section pairs is the same as that of the morphisms 
    \[
        \ibar \colon \Qpre(X,\beta)\to \Qpre(Y,\iota_\ast \beta)
    \]
    and
    \[
        \imap \colon \Mgn(X,\beta)\to \Mgn(Y,\iota_\ast \beta).
    \]
\end{remark}

\begin{remark}\label{rmk: coefficientes_a_explained}
    The coefficients $a_{\rho}^\tau$ used in \Cref{constr: functoriality_qmaps} satisfy the relations
    \[
        \iota^{\ast} [D^Y_\tau] = \sum_{\rho\in\Sigma_X(1)} a_\rho^\tau [D^X_\rho]
    \]
    for all $\tau\in\Sigma_Y(1)$.
\end{remark}

\subsection{Contraction morphism for projective space}\label{subsec: contraction for Pn}

Fix $N\geq 1$ and let $X_\Sigma = \bbP^N$.
There is a natural morphism 
\[
    c_{\bbP^N}\colon \overline{\ccM}_{g,n}(\bbP^N,\beta) \to \Qgn(\bbP^N,\beta),
\]
called the contraction or comparison morphism for $\bbP^N$. It can be described as follows: a stable map is naturally a quasimap, but it is stable as a quasimap if and only if there are no rational tails. A rational tail $T$ is a tree of rational components with no marked points and such that $T\cap \overline{(C\setminus T)}$ is a singleton (necessarily a node inside $C$). Let $T$ be a rational tail of a stable map $f\colon C\to \bbP^N$, let $\hat{C} = \overline{C\setminus T}$ and let $p = T\cap \hat{C}$.
One must contract $T$ and define $\hat{L}_\rho = L_\rho\mid_{\hat{C}} \otimes \ccO_{\hat{C}} (d_{T,\rho} p)$ with $d_{T,\rho} = \deg(L_\rho\mid_T)$. Similarly, one must take $\hat{s}_\rho = s_\rho \otimes \coordp^{d_{T,\rho}}$, with $\coordp$ a local parameter at $\p$ in $\hat{C}$. Doing this for every rational tail $T$ of $f$ produces the stable quasimap $c_{\bbP^N}(f)$. The description of $c_{\bbP^N}$ for families can be found in \cite[Theorem 7.1]{popa_roth}.

\section{The degree of a basepoint}\label{sec: basepoint class}

We fix a smooth proper toric variety $X$ with fan $\Sigma$ for the rest of \Cref{sec: basepoint class}.

Every toric quasimap $\q\colon C\dashrightarrow X$ has a degree $\beta = \beta_\q$ by \Cref{def: degree quasimap}, which is an effective curve class in $A_1(X)$. Moreover, $\q$ admits an associated regular morphism $\qreg\colon C \to X_\Sigma$ (\Cref{def: regular extension}), which also has a degree $\beta_{\qreg}$. However, if $\q$ has basepoints, then $\beta\neq\beta_{\qreg}$, see \Cref{ex: extension has different degree}.

In \Cref{def: degree of a basepoint}, we attach an effective curve class $\betap$ to each point $\p$ of $C$, called the degree of $\q$ at $x$. We show in \Cref{prop: properties degree of a basepoint} that the degree $\betap$ is 0 unless $\p$ is a basepoint, and that, as $\p$ varies over all basepoints, the degrees $\betap$ explain the discrepancy between $\beta$ and $\beta_{\qreg}$, since
\[
    \beta-\beta_{\qreg} = \sum_{\p\in B} \betap.
\]
We show in \Cref{prop: degree_generalizes_length} that the degree $\betap$ recovers the length of $\q$ at $\p$, introduced in \cite[Def. 7.1.1]{C-FKM}, for toric quasimaps.\\

\subsection{The regular extension of a quasimap}

\begin{definition}\label{def: regular extension}
    Let $\q$ be a quasimap to $X$. Then $\q$ defines a regular morphism
    \[
        \q\colon C\setminus B\to X.
    \]
    Since $X$ is proper and $B$ consists of smooth points in $C$, there is a unique regular morphism
    \[
        \qreg\colon C\to X
    \]
    extending $\q$. We call $\qreg$ the \textit{(regular) extension} of $\q$.
\end{definition}

\begin{example}\label{ex: extension has different degree}
    In general, $\q$ and $\qreg$ may have different degrees.
    Consider the quasimap 
    \[
        \q \colon \bbP^1 \to \bbP^2\colon [x\colon y]\mapsto [0\colon 0 \colon x]
    \]
    which has a unique basepoint, at the point $[0\colon 1]$. Its extension is the constant map
    \[
        \qreg \colon \bbP^1 \to \bbP^2\colon [x \colon y]\mapsto [0\colon 0 \colon 1].
    \]
    Note that $\q$ has degree 1 but $\qreg$ has degree 0. In particular, if we add two distinct marks to $\bbP^1$, distinct from $[0\colon 1]$, then $\q$ is a stable quasimap but $\qreg$ is not a stable map.
\end{example}

\subsection{Definition of the degree of a basepoint}\label{subsec: degree_basepoint}

As seen in \Cref{ex: extension has different degree}, the degree of a quasimap may be different from the degree of its regular extension. We aim to assign a degree to each basepoint in order to explain this discrepancy. Such degree will be constructed using the vanishing order of the sections at a given basepoint.\\

Given a 
nodal curve $C$, a line bundle $L$ on $C$, a non-zero section $s\in H^0(C,L)$ and a nonsingular point $\p\in C$, we denote by $\ord_\p(s)$ the vanishing order of $s$ at $\p$. 
Furthermore, we declare that $\ord_\p(0) = \infty$ and we extend the natural operation and order on the monoid $\bbZ_{\geq 0}$ to $\bbZ_{\geq 0}\cup \{\infty\}$ in the standard way. This means that we declare $a + \infty = \infty$ and $a<\infty$ for all $a\in\bbZ_{\geq 0}$. It follows from this definition that $\infty - a = \infty$ for every $a\in\bbZ_{\geq 0}$.

We will use the following basic result on toric geometry, whose proof we include for completeness.

\begin{lemma}\label{lem: generators Pic}
    Let $\sigma \in \Sigma(\dim X)$. Then the classes $\{ [D_\rho] \colon \rho \notin \sigma(1) \}$ form a $\bbZ$-basis of $\Pic(X)$. 
\end{lemma}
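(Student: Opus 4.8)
The plan is to use the smoothness of $X$ at the cone $\sigma$ together with the standard exact sequence relating the divisor class group of a toric variety to the character lattice. First I would recall that for any smooth toric variety $X_\Sigma$ there is an exact sequence
\[
    0 \to M \xrightarrow{\ \phi\ } \bigoplus_{\rho \in \Sigma(1)} \bbZ\, D_\rho \to \Pic(X) \to 0,
\]
where $\phi(m) = \sum_\rho \langle m, u_\rho\rangle D_\rho$; this is \cite[Theorem 4.1.3]{CLS} (using that $X$ is smooth, hence has no torus factors when proper, and $\Pic(X) = A^1(X) = \mathrm{Cl}(X)$). So $\Pic(X)$ is presented as the cokernel of a map from a rank-$\dim X$ free module into a free module of rank $|\Sigma(1)|$, and a $\bbZ$-basis of $\Pic(X)$ should come from the images of those $D_\rho$ with $\rho \notin \sigma(1)$.

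The key step is the following linear-algebra observation. Fix $\sigma \in \Sigma(\dim X)$ and list its rays $\rho_1, \dots, \rho_n$ (where $n = \dim X$), so that $\{u_{\rho_1}, \dots, u_{\rho_n}\}$ is a $\bbZ$-basis of $N$ by smoothness of $X$ at $\sigma$. Let $m_1, \dots, m_n \in M$ be the dual basis, characterised by $\langle m_i, u_{\rho_j}\rangle = \delta_{ij}$. Then in $\Pic(X)$ we have the relations
\[
    0 = [\phi(m_i)] = \sum_{\rho} \langle m_i, u_\rho\rangle [D_\rho] = [D_{\rho_i}] + \sum_{\rho \notin \sigma(1)} \langle m_i, u_\rho\rangle [D_\rho],
\]
so each $[D_{\rho_i}]$ is an explicit $\bbZ$-linear combination of the classes $\{[D_\rho] : \rho \notin \sigma(1)\}$. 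This shows these classes generate $\Pic(X)$. For linear independence (equivalently, that they form a basis), I would observe that $M \xrightarrow{\phi} \bigoplus_\rho \bbZ D_\rho$ followed by projection onto the coordinates indexed by $\sigma(1)$ is given, in the bases $\{m_i\}$ and $\{D_{\rho_j}\}$, by the identity matrix (since $\langle m_i, u_{\rho_j}\rangle = \delta_{ij}$); hence $\phi$ is a split injection with complementary summand $\bigoplus_{\rho \notin \sigma(1)} \bbZ D_\rho$, and the composite $\bigoplus_{\rho \notin \sigma(1)} \bbZ D_\rho \hookrightarrow \bigoplus_\rho \bbZ D_\rho \twoheadrightarrow \Pic(X)$ is therefore an isomorphism. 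This gives the claim directly.

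I expect no serious obstacle here; the only thing to be careful about is invoking the exact sequence in the correct generality — namely that $X$ smooth and proper implies $X$ has no torus factors, so that $M \to \bigoplus_\rho \bbZ D_\rho$ is injective and the sequence above is exact on the left (see \cite[Theorem 4.1.3]{CLS}). Alternatively, and perhaps more self-containedly, one can avoid citing exactness on the left entirely: generation is as above, and for the rank count one notes that $\Pic(X)$ is free of rank $|\Sigma(1)| - \dim X = |\Sigma(1)| - n = |\{\rho \notin \sigma(1)\}|$, so a generating set of that size must be a basis. That is the cleanest route and is the one I would write up.
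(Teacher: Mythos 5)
Your proposal is correct and takes essentially the same route as the paper: the dual-basis relations coming from $\div(\chi^{m_i})$ (your $[\phi(m_i)]=0$) show that the classes $\{[D_\rho]\colon \rho\notin\sigma(1)\}$ generate $\Pic(X)$, and the rank count $\lvert\Sigma(1)\rvert-\dim X$ then upgrades the generating set to a $\bbZ$-basis, which is exactly the paper's argument. Your alternative via the splitting $\pi\circ\phi=\id$ is a fine self-contained variant that avoids quoting the rank of $\Pic(X)$, but the write-up you say you would choose coincides with the paper's proof.
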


\begin{proof}
    The Picard group is generated by the classes $[D_\rho]$ for $\rho \in \Sigma(1)$, see \cite[Thm 4.2.1]{CLS}. Let $n=\dim X$ and let $\sigma(1)=\{\rho_1,\ldots,\rho_n\}$. Then the ray generators $\{u_{\rho_1},\ldots,u_{\rho_n}\}$ form a basis of the co-character lattice $N$ by smoothness of $\Sigma$. Let $\{m_1, \ldots, m_n\}$ be its dual basis in the character lattice $M$. Then
	\begin{equation}\label{eq: divisor of a character with dual basis}
		\div(\chi^{m_i}) = \sum_{\rho\in\Sigma(1)} \langle m_i, u_\rho\rangle D_\rho = D_{\rho_i} + \sum_{\rho\notin\sigma(1)} \langle m_i, u_\rho\rangle D_\rho.
	\end{equation}
    where $\chi^m$ is the character corresponding to $m\in M$. Taking linear equivalence, it follows that for $i=1,\ldots, n$ the class $[D_{\rho_i}]$ lies in the subgroup generated by $\{ [D_\rho] \colon \rho \notin \sigma(1) \}$. We conclude since $\Pic(X)$ is free of rank $\lvert \Sigma(1)\rvert - n$ by smoothness and properness of $X$.
\end{proof}

\begin{notation}\label{not: compp_vanishp}
    Let $\q=(C,L_\rho,s_\rho,c_m)$ be a quasimap to $X$ and let $\p\in C$ be a smooth point.  
    \begin{itemize}
        \item We denote by $\compp$ the irreducible component of $C$ containing $\p$ and 
        \item by $\vanishp$ the set of rays $\rho\in\Sigma(1)$ such that $s_{\rho}$ vanishes identically on $\compp$.
    \end{itemize}
\end{notation}

\begin{construction}\label{constr: degree of basepoint no vanishing}
    Let $\q=(C,L_\rho,s_\rho,c_m)$ be a quasimap to $X$ and let $\p\in C$ be a smooth point. 
    Given $\sigma\in\Sigma(\dim X)$ such that $\vanishp \subseteq \sigma(1)$, we define the curve class $\beta(\p,\sigma)$ by the conditions
    \begin{equation}\label{equation:definition_beta_p_sigma}
        \beta(\p,\sigma)\cdot [D_\rho] = \ord_\p(s_\rho\mid_{\compp})   \ \forall \rho\notin\sigma(1).
    \end{equation}
    The class $\beta(\p,\sigma)$ is well-defined due to \Cref{lem: generators Pic}.
\end{construction}

Similarly, to any cone $\sigma\in \Sigma(\dim X)$ and any $a = (a_\rho)\in \bbR^{\Sigma(1)}$ we can associate the unique curve class $\beta(a,\sigma)\in A_1(X)$ satisfying the conditions
\begin{equation}\label{eq: definition beta a sigma}
        \beta(a,\sigma)\cdot [D_\rho] = a_\rho  \ \forall \rho\notin\sigma(1).
\end{equation}

Before we use \Cref{constr: degree of basepoint no vanishing}, we need to prove an elementary lemma about the classes $\beta (a,\sigma)$.

\begin{lemma}\label{lem: technical lemma inequalities dual basis}
    Let $n = \dim X$, let $\sigma\in\Sigma(n)$ be a cone with rays $\sigma(1) = \{\rho_1,\ldots, \rho_n\}$ and let $\{m_1,\ldots,m_n\}$ be the dual basis to the ray generators $\{u_{\rho_1},\ldots, u_{\rho_n}\}$. Let $a = (a_\rho)\in \bbR^{\Sigma(1)}$ and define $u = \sum_{\rho\in\Sigma(1)} a_\rho u_\rho$. Then for $i\in \{1,\ldots, n\}$ we have that
    \begin{equation}\label{eq: equivalent inequalities}
	   a_{\rho_i} \geq \beta(a,\sigma) \cdot [D_{\rho_i}] \iff \scalar{m_i,u}\geq 0.
    \end{equation}
    In particular, $a_{\rho_i} \geq \beta(a,\sigma) \cdot [D_{\rho_i}]$ for all $i\in \{1,\ldots, n\}$ if and only if $u\in \sigma$.
\end{lemma}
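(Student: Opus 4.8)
The statement to prove is \Cref{lem: technical lemma inequalities dual basis}: for $\sigma \in \Sigma(n)$ with rays $\{\rho_1,\dots,\rho_n\}$, dual basis $\{m_1,\dots,m_n\}$ to $\{u_{\rho_1},\dots,u_{\rho_n}\}$, a vector $a = (a_\rho) \in \bbR^{\Sigma(1)}$, and $u = \sum_{\rho} a_\rho u_\rho$, we have $a_{\rho_i} \geq \beta(a,\sigma)\cdot[D_{\rho_i}] \iff \scalar{m_i,u}\geq 0$, and consequently the ``all $i$'' version is equivalent to $u\in\sigma$. The plan is to compute both sides of the claimed equivalence explicitly in terms of the pairings $\scalar{m_i,u_\rho}$, and observe they differ by a manifestly nonnegative (in fact: equal) quantity — really the two sides should turn out to be \emph{equal}, not just equivalent, so the $\iff$ is trivial once the identity is established.

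First I would recall from the proof of \Cref{lem: generators Pic} the key linear-equivalence relation: for each $i$, $\div(\chi^{m_i}) = D_{\rho_i} + \sum_{\rho\notin\sigma(1)} \scalar{m_i,u_\rho} D_\rho$, hence in $\Pic(X)$ we have $[D_{\rho_i}] = -\sum_{\rho\notin\sigma(1)} \scalar{m_i,u_\rho}[D_\rho]$. Pairing this with $\beta(a,\sigma)$ and using the defining property \eqref{eq: definition beta a sigma}, namely $\beta(a,\sigma)\cdot[D_\rho] = a_\rho$ for $\rho\notin\sigma(1)$, gives
\[
    \beta(a,\sigma)\cdot[D_{\rho_i}] = -\sum_{\rho\notin\sigma(1)} \scalar{m_i,u_\rho}\, a_\rho.
\]
On the other hand, $u = \sum_{\rho\in\Sigma(1)} a_\rho u_\rho$, and since $\scalar{m_i,u_{\rho_j}} = \delta_{ij}$ by the dual-basis property, we get $\scalar{m_i,u} = a_{\rho_i} + \sum_{\rho\notin\sigma(1)} \scalar{m_i,u_\rho}\, a_\rho$. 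Combining the two displays yields the identity $\scalar{m_i,u} = a_{\rho_i} - \beta(a,\sigma)\cdot[D_{\rho_i}]$, which immediately gives \eqref{eq: equivalent inequalities} (indeed with equality of the two differences, not merely equivalence of the inequalities).

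For the final sentence, I would note that $u\in\sigma$ if and only if $u$ is a nonnegative linear combination of the ray generators $u_{\rho_1},\dots,u_{\rho_n}$ (these generate $\sigma$ as a cone, by smoothness they are a basis of $N_\bbR$, so the expression is unique); writing $u = \sum_{i=1}^n c_i u_{\rho_i}$ we have $c_i = \scalar{m_i,u}$, so $u\in\sigma \iff \scalar{m_i,u}\geq 0$ for all $i$, which by the first part is $\iff a_{\rho_i}\geq \beta(a,\sigma)\cdot[D_{\rho_i}]$ for all $i$.

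There is no real obstacle here — the only thing to be careful about is bookkeeping with the two index ranges (rays in $\sigma(1)$ versus rays outside), making sure the dual-basis pairing $\scalar{m_i,u_{\rho_j}}=\delta_{ij}$ is used correctly and that the linear equivalence relation from \Cref{lem: generators Pic} is quoted in the right direction. Everything is a short linear-algebra computation over $\bbR$ (the pairing $A_1(X)\times A^1(X)\to\bbZ$ extended $\bbR$-linearly), valid because $X$ is smooth and proper so $\Pic(X)$ is free and the intersection pairing is perfect.
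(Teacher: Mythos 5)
Your proposal is correct and follows essentially the same route as the paper: it pairs the relation $[D_{\rho_i}] = -\sum_{\rho\notin\sigma(1)}\scalar{m_i,u_\rho}[D_\rho]$ from \Cref{lem: generators Pic} with $\beta(a,\sigma)$ and compares with the expansion of $\scalar{m_i,u}$, yielding the identity $\scalar{m_i,u} = a_{\rho_i} - \beta(a,\sigma)\cdot[D_{\rho_i}]$ from which both the equivalence and the cone criterion follow. Your observation that the two quantities are actually equal (not merely that the inequalities are equivalent) is a slight sharpening of the paper's phrasing, but the computation is the same.
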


\begin{proof}
    With the notations of the statement, it is clear that 
    \[
        u\in \sigma \iff \scalar{m_i,u}\geq 0\ \forall\, i \in \{1,\ldots, n\}.
    \]
    Therefore, it is enough to prove
    the equivalence in \eqref{eq: equivalent inequalities}.
    
    Fix $i \in \{1,\ldots, n\}$. By \Cref{eq: divisor of a character with dual basis,eq: definition beta a sigma}, we have that
	\begin{equation}\label{equation: technical lemma aux one}
	    \beta(a,\sigma)\cdot [D_{\rho_i}] = 
       -\sum_{\rho\notin\sigma(1)} \scalar{m_i, u_\rho} \beta(a,\sigma)\cdot [D_\rho]= 
       -\sum_{\rho\notin\sigma(1)} a_\rho \scalar{m_i, u_\rho}.
	\end{equation}
    On the other hand,
	\begin{equation}\label{equation: technical lemma aux two}
	\scalar{m_i,u} = \scalar{m_i,\sum_{\rho\in\Sigma(1)} a_\rho u_\rho} = a_{\rho_i} + \sum_{\rho \notin \sigma(1)} a_\rho \scalar{m_i,u_\rho}.
	\end{equation}
    Using \Cref{equation: technical lemma aux one,equation: technical lemma aux two}, we conclude that
    \[
        a_{\rho_i} \geq \beta(a,\sigma) \cdot [D_{\rho_i}] \iff a_{\rho_i} + \sum_{\rho \notin \sigma(1)} a_\rho \scalar{m_i,u_\rho} \geq 0 \iff \scalar{m_i,u}\geq 0.\qedhere
    \]
\end{proof}

\begin{proposition}\label{proposition: existence choice of sigma no vanishing}
    Let $\q=(C,L_\rho,s_\rho,c_m)$ be a quasimap to $X$ and let $\p\in C$ be a smooth point such that $\vanishp=\emptyset$. Then there is a cone $\sigma \in \Sigma(\dim X)$ and a curve class $\betap \in A_1(X)$ satisfying
    \begin{enumerate}
        \item\label{item:no_vanishing_equalities} $\ord_\p(s_\rho\mid_{\compp}) = \betap\cdot [D_\rho] \text{ for all } \rho \notin \sigma(1)$,
        \item\label{item:no_vanishing_inequalities} $\ord_\p(s_\rho\mid_{\compp}) \geq \betap\cdot [D_\rho] \text{ for all } \rho \in \Sigma(1)$.
    \end{enumerate}
\end{proposition}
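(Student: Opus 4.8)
The plan is to reduce the existence of $\sigma$ to the quasimap non-degeneracy condition, and then to identify the curve class $\betap$ with one of the classes $\beta(\p,\sigma)$ from \Cref{constr: degree of basepoint no vanishing}. First I would record the combinatorial reformulation supplied by \Cref{lem: technical lemma inequalities dual basis}: for a cone $\sigma \in \Sigma(\dim X)$, setting $a_\rho = \ord_\p(s_\rho\mid_{\compp})$ and $u(\sigma) = \sum_{\rho\in\Sigma(1)} a_\rho u_\rho$, condition \eqref{item:no_vanishing_inequalities} for the class $\betap = \beta(\p,\sigma)$ is, by that lemma, equivalent to $u(\sigma) \in \sigma$ — note that $u(\sigma)$ does not actually depend on $\sigma$, so write $u = \sum_\rho a_\rho u_\rho$. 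Condition \eqref{item:no_vanishing_equalities} holds by construction of $\beta(\p,\sigma)$ for any admissible $\sigma$ (using $\vanishp = \emptyset \subseteq \sigma(1)$, so all $\beta(\p,\sigma)$ are defined). Thus the whole statement reduces to: there exists a maximal cone $\sigma$ containing the vector $u = \sum_{\rho\in\Sigma(1)} \ord_\p(s_\rho\mid_{\compp})\, u_\rho$.

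The existence of such a $\sigma$ follows from properness of $X$, since $X$ proper means $\bigcup_{\sigma\in\Sigma}\sigma = N\otimes\bbR$ and every point lies in some maximal cone; so $u$ lies in some $\sigma\in\Sigma(\dim X)$. I would then set $\betap \coloneqq \beta(\p,\sigma)$ for this choice of $\sigma$. Condition \eqref{item:no_vanishing_equalities} is then immediate from \eqref{equation:definition_beta_p_sigma}, and condition \eqref{item:no_vanishing_inequalities} follows from the "in particular" clause of \Cref{lem: technical lemma inequalities dual basis}, once we observe the inequality is automatic for $\rho\notin\sigma(1)$ (where it is an equality) and holds for $\rho\in\sigma(1)$ precisely because $u\in\sigma$.

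The one point that needs a little care — and which I expect to be the main (though still minor) obstacle — is ensuring that the vector $u$ is honestly well-defined, i.e. that every $\ord_\p(s_\rho\mid_{\compp})$ is finite. This is exactly where the hypothesis $\vanishp = \emptyset$ enters: it guarantees $s_\rho\mid_{\compp}\not\equiv 0$ for every $\rho$, so $\ord_\p(s_\rho\mid_{\compp}) \in \bbZ_{\geq 0}$ rather than $\infty$, and the sum $u = \sum_\rho \ord_\p(s_\rho\mid_{\compp})\,u_\rho$ is a genuine element of $N$. (Without this hypothesis one would have to work on the sublattice/subfan spanned by the non-vanishing rays, which is the content of the more general \Cref{proposition: existence choice of sigma general case}.) I would close by remarking that the quasimap non-degeneracy condition is not even needed here — only properness — although it does force $\vanishp$ to be contained in some cone, which is what makes the general statement work. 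The proof is then complete.
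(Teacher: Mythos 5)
Your argument is correct and is essentially the paper's own proof: take $u=\sum_{\rho\in\Sigma(1)}\ord_\p(s_\rho\mid_{\compp})\,u_\rho$ (finite by $\vanishp=\emptyset$), use completeness of the fan to find a maximal cone $\sigma\ni u$, set $\betap=\beta(\p,\sigma)$, and conclude via \Cref{lem: technical lemma inequalities dual basis}. Your extra observations (that the equalities hold by construction and that only properness, not non-degeneracy, is used here) are accurate and do not change the argument.
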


\begin{proof}
    It suffices to prove that there exists $\sigma \in \Sigma(\dim X)$ such that \Cref{item:no_vanishing_inequalities} holds for $\betap \coloneqq \beta(\p,\sigma)$ and all $\rho \notin \sigma(1)$. For that, let $u = \sum_{\rho\in\Sigma(1)} \ord_\p(s_\rho\mid_{\compp}) u_\rho$ and let $\sigma \in \Sigma$ be such that $u\in \sigma$, which exists because $X$ is proper. Then the results follows from \Cref{lem: technical lemma inequalities dual basis} applied to $(a_\rho) = (\ord_{\p}(s_\rho\mid_{\compp}))$.
\end{proof}

Next, we want to generalize \Cref{proposition: existence choice of sigma no vanishing} to the case where $\vanishp$ might be empty.

\begin{remark}
    Given a toric quasimap $\q$ with underlying curve $C$ and a smooth point $\p\in C$, there exists a maximal cone $\sigma$ such that $\vanishp\subseteq \sigma(1)$ by generic non-degeneracy. 
\end{remark}

\begin{proposition}\label{proposition: existence choice of sigma general case}
Let $\q=(C,L_\rho,s_\rho,c_m)$ be a quasimap to $X$ and let $\p\in C$ be a smooth point. There is a cone $\sigma \in \Sigma(\dim X)$ and a curve class $\betap \in A_1(X)$ satisfying
    \begin{enumerate}
        \item\label{item:general_equalities} $\ord_\p(s_\rho\mid_{\compp}) = \betap\cdot [D_\rho] \text{ for all } \rho \notin \sigma(1)$,
        \item\label{item:general_inequalities} $\ord_\p(s_\rho\mid_{\compp}) \geq \betap\cdot [D_\rho] \text{ for all } \rho \in \Sigma(1)$.
        \item\label{item:general_containment} $\vanishp \subseteq \sigma(1)$.
    \end{enumerate}
\end{proposition}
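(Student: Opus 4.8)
The plan is to imitate the proof of \Cref{proposition: existence choice of sigma no vanishing}, but to deal with the rays where $\ord_\p(s_\rho\mid_{\compp})=\infty$ by replacing that value with a large finite parameter $t$ and controlling the limit $t\to\infty$. Write $V\coloneqq\vanishp$ and, for $t\in\bbR_{>0}$, set
\[
    u_t \;\coloneqq\; \sum_{\rho\notin V}\ord_\p(s_\rho\mid_{\compp})\,u_\rho \;+\; t\sum_{\rho\in V}u_\rho\;\in\;N_\bbR ,
\]
which is well defined because $\ord_\p(s_\rho\mid_{\compp})<\infty$ exactly when $\rho\notin V$. The heart of the argument is to find a maximal cone $\sigma$ with $V\subseteq\sigma(1)$ and $u_t\in\sigma$ for $t\gg0$; once this is done, \Cref{constr: degree of basepoint no vanishing} and \Cref{lem: technical lemma inequalities dual basis} will finish the proof.

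For the first step I would argue as follows. By quasimap non-degeneracy at a general point of $\compp$ there is a maximal cone $\sigma_0$ with $V\subseteq\sigma_0(1)$; since $\sigma_0$ is smooth, $\gamma\coloneqq\mathrm{Cone}(u_\rho:\rho\in V)$ is a face of $\sigma_0$, hence a cone of $\Sigma$ with $\gamma(1)=V$. Because $X$ is proper, the star $\mathrm{Star}(\gamma)$ is a complete fan in $\overline N_\bbR\coloneqq N_\bbR/\langle\gamma\rangle$ whose maximal cones are precisely the images of the maximal cones $\sigma$ of $\Sigma$ having $\gamma$ as a face, i.e.\ the images of the maximal $\sigma$ with $V\subseteq\sigma(1)$ (see \cite[Proposition~3.2.7]{CLS}). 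The image of $u_t$ in $\overline N_\bbR$ does not depend on $t$, so by completeness it lies in some maximal cone of $\mathrm{Star}(\gamma)$; pick a maximal cone $\sigma\in\Sigma$ mapping onto it, so $V\subseteq\sigma(1)$. Expanding $u_t$ in the $\bbZ$-basis $\{u_\rho:\rho\in\sigma(1)\}$ (available by smoothness), the coefficients indexed by $\sigma(1)\setminus V$ are exactly the coordinates of $\overline{u_t}$ in the induced basis of $\overline N_\bbR$, hence nonnegative since $\overline{u_t}\in\mathrm{Star}(\gamma)$-cone; the coefficients indexed by $V$ have the form $(\text{constant})+t$, hence nonnegative for $t$ large. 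Thus $u_t\in\sigma$ for all $t\gg0$.

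Now set $\betap\coloneqq\beta(\p,\sigma)$, which is well defined by \Cref{constr: degree of basepoint no vanishing} since $V\subseteq\sigma(1)$; conditions \Cref{item:general_equalities} and \Cref{item:general_containment} then hold by construction. For \Cref{item:general_inequalities}: it is the equality of \Cref{item:general_equalities} when $\rho\notin\sigma(1)$, and it is trivial when $\rho\in V$ (the left-hand side is $\infty$). For $\rho\in\sigma(1)\setminus V$ I would apply \Cref{lem: technical lemma inequalities dual basis} to the finite vector $a^{(t)}$ defined by $a^{(t)}_\rho=t$ for $\rho\in V$ and $a^{(t)}_\rho=\ord_\p(s_\rho\mid_{\compp})$ otherwise: its associated vector is $u_t$, and $\beta(a^{(t)},\sigma)=\beta(\p,\sigma)=\betap$ because the pairings defining $\beta(a^{(t)},\sigma)$ only involve the coordinates $a^{(t)}_\rho$ with $\rho\notin\sigma(1)$, which are independent of $t$. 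Since $u_t\in\sigma$ for $t\gg0$, the lemma yields $a^{(t)}_\rho\geq\betap\cdot[D_\rho]$ for every $\rho\in\sigma(1)$; restricting to $\rho\in\sigma(1)\setminus V$ gives $\ord_\p(s_\rho\mid_{\compp})\geq\betap\cdot[D_\rho]$, as required.

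The main obstacle is the first step, and more precisely making $V\subseteq\sigma(1)$ and $u_t\in\sigma$ hold \emph{for the same} cone $\sigma$. A naive ``the ray $t\mapsto u_t$ eventually enters, and stays in, one maximal cone'' argument (using convexity to see that the relevant set of parameters is an interval) does produce a maximal cone $\sigma$, but only forces $\lim_{t\to\infty}u_t/t=\sum_{\rho\in V}u_\rho\in\sigma$, which need not imply $V\subseteq\sigma(1)$. Passing to the complete fan $\mathrm{Star}(\gamma)$ is exactly what reconciles the two requirements, and it is here that properness of $X$ enters.
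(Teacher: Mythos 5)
Your proof is correct and follows essentially the same route as the paper: the paper also forms the cone $\tau$ spanned by the rays in $\vanishp$, passes to the quotient $N/N_\tau$ where the star of $\tau$ is a complete fan (by completeness of the orbit closure $V(\tau)$), picks a maximal cone $\sigma\supseteq\tau$ whose image contains the class of $u_0=\sum_{\rho\notin\vanishp}\ord_\p(s_\rho\mid_{\compp})u_\rho$, and then lifts. Your explicit choice of lift with parameter $t\gg 0$ and the verification via \Cref{lem: technical lemma inequalities dual basis} merely spell out the final step that the paper states as ``it is enough to choose $(a_\rho)_{\rho\in\vanishp}$ so that the lift lies in $\sigma$.''
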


\begin{proof}
        It is clear that we are looking for a curve class of the form $\betap = \beta(a,\sigma)$ for certain integers $(a_\rho)_{\rho\in\Sigma(1)}$ and some $\sigma \in \Sigma(\dim X)$ with the extra conditions: 
        \begin{enumerate}[(i)]
            \item $a_\rho = \ord_\p(s_\rho\mid_{\compp})$ for every $\rho \notin \vanishp$, which is equivalent to \Cref{item:general_equalities}.
            \item $u=\sum_{\rho \in \Sigma(1)} a_\rho u_\rho \in \sigma$, which is equivalent to \Cref{item:general_inequalities} by \Cref{lem: technical lemma inequalities dual basis}.
        \end{enumerate}

        If $\vanishp = \emptyset$ it is enough to let $a_\rho = \ord_\p(s_\rho\mid_{\compp})$ for all $\rho\in \Sigma(1)$, as we did in \Cref{proposition: existence choice of sigma no vanishing}.
        
        In general, let $\tau$ be the cone generated by the rays in $\vanishp$, which lies in $\Sigma$ by smoothness of $X$. Let $N_\tau$ be the sublattice of $N$ generated by $\tau\cap N$ and let $N(\tau) = N/N_\tau$. Let 
        \[
            u_0 = \sum_{\rho\notin\vanishp} \ord_{\p}(s_\rho\mid_{\compp}) u_\rho
        \]
        and let $[u_0]$ denote its class in $N(\tau)$. Since $X_\Sigma$ is complete, so is the closure $V(\tau)$ of the orbit corresponding to $\tau$, which is a toric variety with fan the star of $\tau$. It follows that there is a cone $\sigma \in \Sigma(\dim X)$ which contains $\tau$ as a face and such that $[u_0]\in [\sigma]$ in $N(\tau)$. It is enough to choose $(a_\rho)_{\rho\in\vanishp}$ in such a way that $u\in N$ is a lift of $[u_0]\in N(\tau)$ satisfying $u\in \sigma$.
\end{proof}

Next we show that any two classes satisfying \Cref{proposition: existence choice of sigma general case} are equal, independently of the choice of maximal cone $\sigma\in\Sigma$ used to define it.

\begin{remark}\label{rmk: compatible isos}
    Given a curve class $\beta\in A_1(X)$, we have that
    \[
        \sum_{\rho\in\Sigma(1)} \beta\cdot [D_\rho]\ u_\rho = 0.
    \]
    Therefore, for each smooth point $\p\in C$ there is a natural isomorphism 
    \[
        \psi_{m,\p,\beta}\colon \ccO_C(\sum_{\rho\in\Sigma(1)}\ \beta\cdot [D_\rho]\ \scalar{m,u_\rho}\ \p) = \ccO_C(\scalar{m,\sum_{\rho\in\Sigma(1)}\ \beta\cdot [D_\rho]\ u_\rho}\ \p)
        \to \ccO_C.
    \]
    For fixed $\p$ and $\beta$, these isomorphisms satisfy the compatibility 
    \[
        \psi_{m+m',\p,\beta} = \psi_{m,\p,\beta}\otimes \psi_{m',\p,\beta}.
    \]
\end{remark}

\begin{proposition}\label{prop: uniqueness of degree of a basepoint}
    Let $\q = (C, L_\rho, s_\rho, c_m)$ be a quasimap to $X$, let $\p\in C$ be a smooth point and let $\coord$ be a local coordinate at $\p$. There is a unique curve class $\betap\in A_1(X)$ such that the following data defines a quasimap to $X$ of which $\p$ is not a basepoint:
    \[
        \qpr = (C,L'_\rho, s'_\rho, c'_m)
    \]
    with 
    \begin{align*}
        L'_\rho &\coloneqq L_\rho\otimes \ccO_C(-\betap\cdot [D_\rho]\ \p),\\
        s'_\rho &\coloneqq s_\rho\otimes \coord^{-\betap\cdot [D_\rho]},\\
        c'_m &= c_m\otimes \psi_{-m,\p,\betap},
    \end{align*}
    where $\psi_{m,\p,\betap}$ denotes the isomorphism constructed in \Cref{rmk: compatible isos}
\end{proposition}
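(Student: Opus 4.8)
I would prove existence and uniqueness separately. For existence, take $\betap$ to be the class constructed in \Cref{proposition: existence choice of sigma general case}, together with the maximal cone $\sigma$ appearing there. For uniqueness, the point is that $\betap$ can be read off from $\qpr$ — the $\rho$-th point-twist is $L'_\rho\otimes L_\rho^{-1}$ — so it suffices to show that the requirement determines $\qpr$ up to equivalence, which in turn follows from the rigidity of $\Sigma$-collections on a punctured disc.

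\textbf{Existence.} The compatibility $c'_m\otimes c'_{m'}=c'_{m+m'}$ is formal from the compatibility of the $c_m$ and of the isomorphisms $\psi_{-m,\p,\betap}$ of \Cref{rmk: compatible isos}, once one notes that $\otimes_\rho (L'_\rho)^{\otimes\langle m,u_\rho\rangle}$ and $\otimes_\rho L_\rho^{\otimes\langle m,u_\rho\rangle}$ differ by the twist $\ccO_C\!\big(-\big\langle m,\sum_\rho (\betap\cdot[D_\rho])u_\rho\big\rangle\,\p\big)=\ccO_C$ (using $\sum_\rho (\betap\cdot[D_\rho])u_\rho=0$), which $\psi_{-m,\p,\betap}$ trivialises. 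That each $s'_\rho$ is a genuine section of $L'_\rho$ is exactly condition \Cref{item:general_inequalities}: for $\rho\notin\vanishp$ it reads $\ord_\p(s_\rho\mid_{\compp})\geq\betap\cdot[D_\rho]$, so cancelling $\coord^{\betap\cdot[D_\rho]}$ introduces no pole, while for $\rho\in\vanishp$ the section $s'_\rho\mid_{\compp}$ is zero; and away from $\p$ nothing changes. Finally $\p$ is not a basepoint of $\qpr$: the same $\sigma$ works, since by \Cref{item:general_containment} any $\rho\notin\sigma(1)$ lies outside $\vanishp$, whence by \Cref{item:general_equalities} $\ord_\p(s'_\rho\mid_{\compp})=\ord_\p(s_\rho\mid_{\compp})-\betap\cdot[D_\rho]=0$. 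The basepoints of $\qpr$ away from $\p$ coincide with those of $\q$, so $\qpr$ is a quasimap.

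\textbf{Uniqueness.} Suppose $\betap$ and $\betap'$ both have the property, giving $\qpr=(C,L'_\rho,s'_\rho,c'_m)$ and $\qpr'=(C,L''_\rho,s''_\rho,c''_m)$. On $C\setminus\{\p\}$ both agree with $\q$ up to the canonical isomorphism coming from the point-twists being trivial there, hence agree with each other. Fix a small disc $U\ni\p$. Each of $\qpr,\qpr'$ defines a regular morphism $U\to X$ extending $\q\mid_{U\setminus\{\p\}}$; since $X$ is proper such an extension is unique, so these morphisms coincide, and by \cite[Thm 1.1]{Cox_functor} the $\Sigma$-collections $\qpr\mid_U$ and $\qpr'\mid_U$ are equivalent. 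The key point — and the only place smoothness of $X$ is used — is that $\q\mid_{U\setminus\{\p\}}$ has no nontrivial automorphisms: an automorphism $(\lambda_\rho)$ has $\lambda_\rho=1$ for $\rho\notin\vanishp$ (there $s_\rho$ is nonzero on a dense open of $U$), and since $\{u_\rho:\rho\in\vanishp\}$ extends to a $\bbZ$-basis of $N$ the relations $\prod_\rho\lambda_\rho^{\langle m,u_\rho\rangle}=1$ then force $\lambda_\rho=1$ for $\rho\in\vanishp$ too. Hence the equivalence $\qpr\mid_U\xrightarrow{\sim}\qpr'\mid_U$ is forced, on $U\setminus\{\p\}$, to be the canonical identification; restricting it to the punctured disc yields isomorphisms $L'_\rho\mid_U\to L''_\rho\mid_U$ that are the identity on $U\setminus\{\p\}$. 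But $L'_\rho\mid_U$ and $L''_\rho\mid_U$ are extensions of $L_\rho\mid_{U\setminus\{\p\}}$ across $\p$ differing precisely by the divisor $(\betap'\cdot[D_\rho]-\betap\cdot[D_\rho])\,\p$, and two extensions related by an isomorphism restricting to the identity must be equal; therefore $\betap\cdot[D_\rho]=\betap'\cdot[D_\rho]$ for every $\rho$, i.e.\ $\betap=\betap'$.

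\textbf{Main obstacle.} The substance is entirely in the uniqueness statement — equivalently, in the fact that $\betap$ does not depend on the cone $\sigma$ of \Cref{proposition: existence choice of sigma general case}. Its delicate ingredients are the automorphism-rigidity of $\q\mid_{U\setminus\{\p\}}$ (which genuinely fails when $X$ is singular) and the bookkeeping relating the point-twists $\ccO_C(-\betap\cdot[D_\rho]\p)$ to the local coordinate $\coord$. Should that bookkeeping get unwieldy, a fallback is to deduce from ``$\p\notin B_{\qpr}$'' that $\betap$ satisfies \Cref{item:general_equalities,item:general_inequalities,item:general_containment} for some maximal cone, and then to compare two such classes combinatorially via \Cref{lem: technical lemma inequalities dual basis} and the lift to $N(\tau)$ used in the proof of \Cref{proposition: existence choice of sigma general case}.
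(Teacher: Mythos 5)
Your proof is correct. The existence half is essentially the paper's argument: you take $\betap$ from \Cref{proposition: existence choice of sigma general case}, note that regularity of the $s'_\rho$ is exactly \Cref{item:general_inequalities}, and that \Cref{item:general_equalities,item:general_containment} make $\p$ a non-basepoint for the same cone $\sigma$. The uniqueness half, however, is genuinely different. The paper first observes that any admissible class must be of the form $\beta(\p,\sigma)$ for some maximal cone $\sigma$ with $\vanishp\subseteq\sigma(1)$, and then compares two such classes purely combinatorially: the defining equalities and inequalities give $(\beta-\beta')\cdot D_\rho\geq 0$ for $\rho\notin\sigma(1)$, every ample class is a non-negative combination of the $[D_\rho]$, $\rho\notin\sigma(1)$, so $\beta-\beta'$ and (by symmetry) $\beta'-\beta$ pair non-negatively with the nef cone, hence both are effective and therefore zero. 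You instead argue geometrically: on a small punctured neighbourhood of $\p$ both twisted collections are canonically the original one, their regular extensions across $\p$ agree by separatedness of $X$, Cox's representability identifies the two $\Sigma$-collections on the neighbourhood, the automorphism-rigidity of the restricted collection (using $\vanishp\subseteq\sigma(1)$ and smoothness to get part of a $\bbZ$-basis) forces the equivalence to be the canonical one away from $\p$, and then the twist integers $\betap\cdot[D_\rho]$ are read off from the resulting equality of invertible subsheaves, so $\betap=\betap'$ by perfectness of the pairing. The paper's route is shorter and stays inside convex geometry (nef/Mori duality); yours is more conceptual in that it exhibits $\betap$ as determined by the unique local extension of the induced morphism, and it treats an arbitrary candidate class directly without reducing to the classes $\beta(\p,\sigma)$, at the cost of some bookkeeping with canonical identifications (choice of a neighbourhood avoiding nodes, markings and the other basepoints, and compatibility of the $\psi_{m,\p,\betap}$ with the canonical trivializations away from $\p$), which you rightly flag but which does go through.
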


\begin{proof}
    Firstly, for any curve class $\betap$ we have that
    \begin{align*}
        \bigotimes_\rho {L'}_\rho^{\otimes \scalar{m,u_\rho}}  = 
        \left(\bigotimes_\rho L_\rho ^{\otimes \scalar{m,u_\rho}}\right)\otimes \ccO_C(-\sum_\rho (\betap\cdot [D_\rho])\  \scalar{m,u_\rho}\ \p).
    \end{align*}
    The first term is isomorphic to $\ccO_C$ via $c_m$ and the second one via $\psi_{-m,\p,\betap}$.

    Secondly, the function $s'_\rho$ is a regular section of the line bundle $L'_\rho$ if and only if 
    \begin{equation}\label{eq: inequality order betap}
        \ord_\p(s_\rho\mid_{\compp}) \geq \betap \cdot [D_\rho] \ \forall \rho\in\Sigma(1).
    \end{equation}
    Therefore, $\qpr$ is a quasimap if and only if $\betap$ satisfies \Cref{item:no_vanishing_inequalities} in \Cref{proposition: existence choice of sigma general case}.

    Similarly to \Cref{not: compp_vanishp}, let $\vanishp'$ be the set of rays $\rho$ such that $s_{\rho}'$ vanishes identically on $\compp$. Note that $\vanishp = \vanishp'$. One can check that that $\p\notin B_{\qpr}$ if and only if there exists a maximal cone $\sigma\in\Sigma$ such that the collection of rays $\rho\in\Sigma(1)$ satisfying $s'_{\rho}(\p) = 0$ is contained in $\sigma$. 
    This happens if and only if $\betap$ satisfies \Cref{item:general_equalities,item:general_containment} in \Cref{proposition: existence choice of sigma general case}. Therefore, existance of $\betap$ follows from \Cref{proposition: existence choice of sigma general case}.
    
    To conclude, we show that $\betap$ is unique. 
    By the previous discussion, it suffices to check that if we have two cones $\sigma,\sigma'\in\Sigma(\dim X)$ with $\vanishp\subseteq \sigma(1)$ and $\vanishp\subseteq \sigma'(1)$, then the curve classes $\beta \coloneqq \beta(\p,\sigma)$ and $\beta'\coloneqq\beta(\p,\sigma')$ defined in \Cref{constr: degree of basepoint no vanishing} agree. 
    By construction, we have that
    \begin{align*}
		\ord_\p(s_\rho\mid_{\compp}) &\geq \beta'\cdot [D_\rho]  \text{ for all } \rho \in \Sigma(1) \text{ and}\\
		\ord_\p(s_\rho\mid_{\compp}) &= \beta\cdot [D_\rho]  \text{ for all } \rho \notin \sigma(1).
    \end{align*}
    It follows that
    \[
		(\beta-\beta')\cdot D_\rho \geq \ord_\p(s_\rho\mid_{\compp})-\ord_\p(s_\rho\mid_{\compp}) = 0 
	\]
    for all $\rho\notin\sigma(1)$. This uses that fact that, since $\vanishp\subseteq \sigma(1)$, we have that $\ord_\p(s_\rho\mid_{\compp})$ is finite for all $\rho\notin\sigma(1)$. 

    Since every ample line bundle $L$ on $X_\Sigma$ can be written with non-negative coefficients in the basis $\{[D_\rho]\colon \rho\notin\sigma(1)\}$ of $\Pic(X_{\Sigma})$, we see that $\beta-\beta'$ is non-negative on the ample cone, therefore also on the nef cone. This means that $\beta-\beta'$ lies in the dual of the nef cone, so it is effective. The same applies to $\beta'-\beta$, so we must have $\beta = \beta'$.
\end{proof}

Note that $\qpr$ might not be stable quasimap even if $\q$ is, see \Cref{ex: extension has different degree}.

\begin{definition}\label{def: degree of a basepoint}
   In the setting of \Cref{prop: uniqueness of degree of a basepoint}, we say that the class $\betap = \betapq \in A_1(X)$ is the \textit{degree of the quasimap $\q$ at the point $\p$}. For a nodal point $\p$, we define $\betap = 0$.
\end{definition}

In \Cref{rmk: geometric_interpretation_degree} we give a more geometric reformulation of $\betap$ in terms of the contraction morphism under certain assumptions. The importance of this notion will become clear in \Cref{subsec: properties_degree_basepoint}. 
we came up with \Cref{def: degree of a basepoint} while studying the non-injectivity of the morphisms $\ibar$ associated to a closed embedding (see \Cref{constr: functoriality_qmaps}), in particular while studying \Cref{ex: quasimaps do not embed}, which we highly recommend.

\subsection{Examples of the degree of a basepoint}\label{subsec: examples degree basepoint}

In \Cref{subsec: degree_basepoint} we have assigned a curve class $\betap\in A_1(X)$ to each smooth point $\p$ of a quasimap to $X$ uniquely determined by \Cref{prop: uniqueness of degree of a basepoint}. The class $\betap$ is of the form $\beta(\p,\sigma)$ (see \Cref{constr: degree of basepoint no vanishing}) for some cone $\sigma \in \Sigma(\dim X)$. Under the assumption that $\vanishp = \emptyset$ (that is, if no section $s_\rho$ vanishes identically on the component $\compp$ containing $\p$), the choice of $\sigma$  (explained in \Cref{proposition: existence choice of sigma no vanishing}) is as follows: it suffices to take $\sigma$ to be any cone in $\Sigma(\dim X)$ containing the element 
\begin{equation}\label{eq: def u}
    u \coloneqq \sum_{\rho\in\Sigma(1)} \ord_\p (s_\rho\mid_{\compp}) u_\rho \in N.
\end{equation}
In general (see \Cref{proposition: existence choice of sigma general case}), if $s_\rho$ vanishes identically on $\compp$, it suffices to replace $\ord_\p (s_\rho\mid_{\compp})$ in \Cref{eq: def u} by a large enough integer $\lambda_\rho \in \bbZ_{\geq 0}$. Intuitively, this corresponds to taking $\ord_\p (s_\rho\mid_{\compp}) = \infty$. In practice, it has the effect of ensuring that $\vanishp \subseteq \sigma(1)$.

\begin{example}[Basepoints of quasimaps to $\bbP^N$]\label{ex: basepoints_PN}
Consider the particular case $X=\bbP^N$. We fix the isomorphism $A_1(\bbP^N) \simeq \bbZ$ mapping the class $\ell$ of a line to $1$.

Let $\q = (C,L_\rho, s_\rho, c_m)$ be a quasimap to $\bbP^N$. Using the isomorphisms $c_m$, we can view $q$ as the choice of a single line bundle $L$ on $C$ with a tuple of sections $s_0,\ldots, s_N \in H^0(C,L)$. The degree of $\q$ is then $\deg(L)$.

The above data determines a rational morphism
\begin{align*}
    C  &\to \bbP^N\\
    x &\mapsto [s_0(x)\colon \ldots \colon s_N(x)],
\end{align*}
because non-degeneracy ensures that there is no irreducible component $C'$ of $C$ where all the restrictions $s_i\mid_{C'}$ are identically 0. A point $x\in C$ is a basepoint if and only if $s_0(x) = \ldots = s_N(x) = 0$; that is, if and only if $x$ lies in the indeterminacy locus of the rational map. 

For a smooth point $\p \in C$, we claim that $\betap$ equals the curve class 
\begin{equation} \label{eq: ex definition dp}
    d_\p \coloneqq \min_{i=0}^N \{\ord_\p (s_i)\} \in A_1(\bbP^N)
\end{equation}
(with the convention that $\ord_\p(s_i) = \infty$ if $s_i$ is identically 0 on the component $\compp$ of $C$ containing $\p$) satisfies the uniqueness property in \Cref{prop: uniqueness of degree of a basepoint}.

Indeed, we can write each section $s_i$ as $s_i=\coordp^{d_\p} s'_i$ with $\coordp$ a local coordinate at $\p$ and $s'_i\in H^0(C,L\otimes \ccO_C(-d_\p \p))$. Furthermore, if the minimum in \Cref{eq: ex definition dp} is achieved at $s_i$, then
\[
    \ord_\p (s'_i) = \ord_\p (s_i) - d_\p = 0,
\]
so $s'_i(\p)\neq 0$. This ensures that $\p$ is not a basepoint of the quasimap $(C,L',s'_0,\ldots, s'_N)$ with $L' = L\otimes \ccO_C(-d_\p \p)$; thus proving the claim.

Note that if we repeat the above for $d'_\p< d_\p$, then the sections $s'_i$ will still vanish simultaneously at $\p$, so $\p$ will remain a basepoint. Similarly, if we choose $d'_\p> d_\p$, then, for $i$ such that the minimum in \Cref{eq: ex definition dp} is achieved at $s_i$, the section $s'_i$ is not a regular section of $L'$. Therefore, the class $d_\p$ is clearly unique in this case.

To conclude, we show that our construction of $\betap$ in \Cref{subsec: degree_basepoint} recovers $d_\p$. 
The fan $\Sigma$ of $\bbP^N$ has $N+1$ rays $\rho_0,\ldots, \rho_N$ and $N+1$ maximal cones, that we label as
\[
    \sigma_{i} = \mathrm{Cone}(\rho_0,\ldots, \widehat{\rho_i},\ldots, \rho_N).
\] 
See \Cref{fig: fan P2} for the fan of $\bbP^2$. Remember that the class of each boundary divisor $D_{\rho_i}$ in $\Pic(\bbP^N)$ equals the class of a hyperplane.

Suppose, for simplicity, that $\vanishp = \emptyset$. Then the class $\beta(x,\sigma_{i}) \in A_1(\bbP^N)$ (see \Cref{constr: degree of basepoint no vanishing}) corresponds to the degree $d_{x,\sigma_{i}} \in \bbZ$ given by
\[
    d_{x,\sigma_{i}} = \beta(x,\sigma_{i}) \cdot [D_{\rho_i}] = \ord_{\p}(s_i).
\]
Therefore, it only remains to check that
\[
    u \coloneqq \sum_{j=0}^N \ord_\p (s_j) u_{\rho_j} \in \sigma_{i} \iff \ord_\p (s_i) = \min_{j=0}^N \{\ord_\p (s_j)\}.
\]
To simplify the notation, let $a_j = \ord_\p (s_j)$. Using that $\sum_{j=0}^N u_{\rho_j} = 0$, we see that
\begin{align*}
    u = \sum_{j=0}^N a_j u_{\rho_j} = \sum_{j\neq i} (a_j - a_i) u_{\rho_j} \in \sigma_{i} \iff a_j - a_i\geq 0\ \forall\ j\neq i \iff a_i = \min_{j=0}^N \{a_j\}.
\end{align*}
This shows that, indeed, the curve class $\beta_\p$ constructed in \Cref{subsec: degree_basepoint} using any cone $\sigma_{i}$ containing $u$ equals $d_\p$, if $\vanishp = \emptyset$. A similar argument works if $\vanishp\neq\emptyset$.
\end{example}

\begin{figure}
    \centering
			\begin{tikzpicture}[scale = .8]
				\fill[red!30]  (0,0) -- (2,0) -- (2,2) -- (0,2);
				\fill[green!30]  (0,0) -- (0,2) -- (-2,2) -- (-2,-2);
				\fill[blue!30]  (0,0) -- (2,0) -- (2,-2) -- (-2,-2);
				\draw[gray] (-2,0)--(0,0);
				\draw[gray] (0,0)--(0,-2);
				\draw[thick] (0,0)--(0,2);
				\draw[thick] (0,0)--(2,0);
				\draw[thick] (0,0)--(-2,-2);
    		\draw (2.5,0) node[align=center] {$\rho_{1}$};
    		\draw (0,2.5) node[align=center] {$\rho_{2}$};
    		\draw (-2.5,-2.5) node[align=center] {$\rho_{0}$};
                \draw (1,1) node[align=center] {$\sigma_{\hat{0}}$};
    		\draw (-1,1) node[align=center] {$\sigma_{\hat{1}}$};
    		\draw (1,-1) node[align=center] {$\sigma_{\hat{2}}$};    
    		\end{tikzpicture}
    		\caption{The fan of $\bbP^{2}$.}
        \label{fig: fan P2}
\end{figure}

\begin{example}[Basepoints of quasimaps to products of projective spaces]\label{ex: basepoints_P}
Let 
\[
    \bbP = \bbP^{n_1}\times \ldots \times \bbP^{n_k}
\]
be a product of projective spaces. Quasimaps to $\bbP$ will be used in \Cref{sec: contraction} to define the contraction morphism between stable maps and stable quasimaps. 

Given a quasimap $\q = (C,L_\rho, s_\rho, c_m)$, we can use the isomorphisms $c_m$ to view it as a collection of $k$ line bundles $L_1, \ldots, L_k$ on $C$ together with a collection of $n_i+1$ sections $s_{0,i}, \ldots, s_{n_i,i} \in H^0(C,L_i)$ for each $i\in \{1,\ldots, k\}$. As in \Cref{ex: basepoints_PN}, we can view the line bundles $L_i$ and the sections $s_{j,i}$ as giving a rational morphism $\q\colon C \dashrightarrow \bbP$ in homogeneous coordinates. 

Let $\betap \in A_1(\bbP) \simeq \bbZ^k$ be the degree of $\q$ at $\p$. Note that a point $x\in C$ is a basepoint of $\q$ if and only if it is a basepoint of the composition $\pi_i \circ \q\colon C \dashrightarrow \bbP^{n_i}$ for some $i$, with $\pi_i\colon \bbP\to \bbP^{n_i}$ the projection in the $i$-th factor. Combining this observation and \Cref{ex: basepoints_PN}, we see that 
\[
    \betap = (d_{\p,1},\ldots, d_{\p,k}) \in \bbZ_{\geq 0}^k
\]
where
\[
    d_{\p,i} \coloneqq \min_{j=0}^{n_i} \{\ord_\p (s_{j,i})\} \in \bbZ_{\geq 0}
\]
is the degree of the quasimap $\pi_i\circ \q\colon C \dashrightarrow \bbP^{n_i}$ at $\p$.
\end{example}

\begin{example}[Basepoints of quasimaps to $\Bl_0\bbP^2$]\label{ex: basepoints_Bl}
We look at $X = \Bl_0\bbP^2$, whose fan is pictured in \Cref{fig: fan blowup2}. Let $\q=(C,L_\rho,s_\rho, c_m)$ be a quasimap to $\Bl_0\bbP^2$. To simplify the notation, we denote $L_{\rho_i}$ by $L_i$, $s_{\rho_i}$ by $s_i$ and the toric boundary divisor $D_{\rho_i}$ corresponding to $\rho_i$ by $D_i$. 

In $A_1(\Bl_0\bbP^2)$, we denote by $L$ the pullback of the class of a line in $\bbP^2$ and by $E$ the class of the exceptional divisor. Let $S=L-E$. Then $[D_0] = L$, $[D_1] =[D_2] =S$ and $[D_3] =E$. 

For certain choices of $m_0, m_1\in M$, we get isomorphisms $c_{m_0}\colon L_0^\vee \otimes L_1\otimes L_3\simeq \ccO_C$ and $c_{m_1}\colon L_1\otimes L_2^\vee \simeq \ccO_C$. We can use these isomorphisms to view $\q$ as a collection of two line bundles $L_1$ and $L_3$ on $C$, and four sections
\[
    s_0\in H^0(C,L_1\otimes L_3),\quad s_1,s_2\in H^0(C,L_1), \quad s_3\in H^0(C,L_3).
\]

\begin{figure}
	\centering
	\begin{tikzpicture}[scale = .8]
		\fill[red!30]  (0,0) -- (0,2) -- (-2,2);
		\fill[yellow!30] (0,0)--(2,0)--(2,2)--(0,2);
		\fill[green!30]  (0,0) -- (-2,2) -- (-2,-2) -- (0,-2);
		\fill[blue!30]  (0,0) -- (2,0) -- (2,-2) -- (0,-2);
		\draw[gray] (-2,0)--(0,0);
		\draw[thick] (0,0)--(0,2);
		\draw[thick] (0,0)--(2,0);
		\draw[thick] (0,0)--(-2,2);
		\draw[thick] (0,0)--(0,-2);
		\draw (0,2.5) node[align=center] {$\rho_{3}$};
		\draw (2.5,0) node[align=center] {$\rho_{1}$};
		\draw (-2.5,2.5) node[align=center] {$\rho_{2}$};
		\draw (0,-2.5) node[align=center] {$\rho_{0}$};
            \draw (1,1) node[align=center] {$\sigma_{1,3}$};
            \draw (-0.5,1.25) node[align=center] {$\sigma_{2,3}$};
            \draw (-1,-1) node[align=center] {$\sigma_{0,2}$};
            \draw (1,-1) node[align=center] {$\sigma_{0,1}$};
	\end{tikzpicture}
	\caption{The fan of $\Bl_0\bbP^2$.}
	\label{fig: fan blowup2}
\end{figure}

Suppose that $\q$ has degree $\beta = L$, therefore
\begin{equation}\label{eq: degrees Li}
    (\deg(L_i))_i = (L\cdot [D_i])_i = (1,1,1,0).
\end{equation}
Let $\p\in C$ be smooth and let $\ord_\p = (\ord_\p(s_i))_{i}$ be the collection of orders of vanishing at $\p$, with the convention that $\ord_\p(s_i) = \infty$ if $s_i$ is identically 0 on the component $\compp$ of $C$ containing $\p$. Note that 
\[
    \ord_\p \in \{0,1,\infty\} \times \{0,1,\infty\} \times \{0,1,\infty\} \times \{0,\infty\}
\]
by \Cref{eq: degrees Li}, so there are 54 different possibilities for $\ord_\p$. 

Some of those 54 are ruled out by generic non-degeneracy. We also impose that $\p$ is a basepoint, which happens if and only if $s_0(\p)=s_3(\p) = 0$ or $s_1(\p)=s_2(\p) = 0$. Finally, we look at all the possibilities modulo the symmetry interchanging $s_1$ and $s_2$. This leaves us with 14 different possibilities for the vector $\ord_\p$, which we collect in \Cref{tab: list betaps}. For each of them, we compute the vector $u$ in \Cref{eq: def u} (with the convention explained there if one of the entries is $\infty$), the collection of cones in $\sigma \in \Sigma(2)$ containing $u$, and the class $\betap = \beta(\p,\sigma)$.

\begin{table}
    \centering
    \begin{tabular}{ |c|c|c|c|c| } 
 \hline
  & $(\ord_\p(s_i))$ & $u=\sum_i \ord_\p(s_i) u_i$ & $\sigma$ & $\betap$\\\hline
    1 & $(0,1,1,0)$ & $u_3$ & $\sigma_{1,3}, \sigma_{2,3}$ & $E$ \\ \hline
    2 & $(0,1,1,\infty)$ & $u_3+\lambda_3 u_3$ & $\sigma_{1,3}, \sigma_{2,3}$ & $E$ \\ \hline
    3 & $(0,1,\infty,0)$ & $u_3 + \lambda_2 u_2$ & $\sigma_{2,3}$ & $E$ \\ \hline
    4 & $(0,1,\infty,\infty)$ & $\lambda_2u_2 + \lambda_3u_3$ & $\sigma_{2,3}$ & $E$ \\ \hline
    5 & $(1,0,0,\infty)$ & $u_1 + \lambda_3 u_3$ & $\sigma_{1,3}$ & $S$ \\ \hline   
    6 & $(1,0,1,\infty)$ & $u_2 + \lambda_3u_3$ & $\sigma_{2,3}$ & $S$ \\ \hline
    7 & $(1,0,\infty, 1)$ & $\lambda_2 u_2$ & $\sigma_{0,2}, \sigma_{2,3}$ & $S$ \\ \hline
    8 & $(1,1,1,0)$ & $0$ & $\sigma_{0,1}, \sigma_{0,2}, \sigma_{1,3}, \sigma_{2,3}$ & $L$ \\ \hline
    9 & $(1,1,1,\infty)$ & $\lambda_3u_3$ & $\sigma_{1,3}, \sigma_{2,3}$ & $L$ \\ \hline
    10 & $(1,1,\infty,0)$ & $\lambda_2u_2$ & $\sigma_{0,2}, \sigma_{2,3}$ & $L$ \\ \hline
    11 & $(1,1,\infty,\infty)$ & $\lambda_2 u_2 + \lambda_3 u_3$ & $\sigma_{2,3}$ & $L$ \\ \hline
    12 & $(\infty,0,\infty,1)$ & $\lambda_0 u_0 + \lambda_2 u_2$ & $\sigma_{0,2}$ & $S$ \\ \hline
    13 & $(\infty,1,1,0)$ & $\lambda_0 u_0$ & $\sigma_{0,1}, \sigma_{0,2}$ & $L$ \\ \hline
    14 & $(\infty,1,\infty,0)$ & $\lambda_0 u_0 + \lambda_2 u_2$ & $\sigma_{0,2}$ & $L$ \\ 
 \hline
\end{tabular}
    \vspace{1em}
    \caption{List of all possible $\betap \neq 0$ for a quasimap to $\Bl_0\bbP^2$ of degree $L$}
    \label{tab: list betaps}
\end{table}

To check that the table is correct, it is enough to see that for each entry one has
\[
    \ord_\p(s_i) \geq \betap \cdot [D_i]
\]
for all $i$, with equalities for some $i\in \{0,3\}$ and some $i \in \{1,2\}$. Note that $(S\cdot [D_{i}])_i = (1,0,0,1) $, $(E\cdot [D_{i}])_i = (0,1,1,-1) $ and $(L\cdot [D_{i}])_i = (1,1,1,0)$.

Finally, generalizing \Cref{ex: basepoints_PN,ex: basepoints_P}, we observe that the curve class $\betap$ at a smooth point $\p$ of any quasimap to $\Bl_0\bbP^2$ can also be described as follows: 
\[
    \betap = d_{\p,S} S + d_{\p,E} E
\]
with
\begin{align*}
    d_{\p,S} &= \min \{ \ord_\p (s_0), \ord_\p (s_1) + \ord_\p (s_3), \ord_\p (s_2) + \ord_\p (s_3)\}, \\
    d_{\p,E} &= \min \{\ord_\p (s_1), \ord_\p (s_2)\}.
\end{align*}
One way to see this is to use the closed embedding $\iota\colon \Bl_0 \bbP^2\hookrightarrow \bbP^2\times \bbP^1$ in \Cref{eq: embedding_blowup_P2xP1}, which satisfies that $\iota_\ast\colon A_1(\Bl_0\bbP^2)\to A_1(\bbP^2\times \bbP^1)$ maps $S$ to $(1,0)$ and $E$ to $(0,1)$, combined with \Cref{lem: basepoints of ibar q} and \Cref{ex: basepoints_P}. This argument uses that $\iota_\ast$ is injective and the compatibility of $\betap$ with pushforward along closed embeddings (\Cref{lem: pushforward degree of a basepoint}).
\end{example}

\subsection{Properties of the degree of a basepoint}\label{subsec: properties_degree_basepoint}

The following property is an immediate corollary of \Cref{prop: uniqueness of degree of a basepoint}.

\begin{corollary}\label{cor: expression regular extension}
    Let $\q = (C,L_\rho, s_\rho, c_m)$ be a quasimap to $X$ and let $B$ denote the set of basepoints of $\q$. For $\p\in B$, let $\coordp$ denote a local coordinate at $\p$. Then the regular extension $\qreg$ of $\q$ is given by
    \[
        (C,L'_\rho, s'_\rho, c'_m)
    \]
    with
    \begin{align*}
        L'_\rho &\coloneqq L_\rho\otimes \ccO_C(- \sum_{\p\in B}\betap\cdot [D_\rho]\ \p)\\
        s'_\rho &\coloneqq s_\rho\otimes \prod_{\p\in B} \coordp^{- \betap\cdot [D_\rho]},\\
        c'_m &= c_m\otimes \left(\bigotimes_{\p\in B} \psi_{-m,\p,\betap}\right).
    \end{align*}
\end{corollary}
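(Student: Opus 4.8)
The plan is to deduce this from \Cref{prop: uniqueness of degree of a basepoint} by peeling off the basepoints of $\q$ one at a time. Enumerate $B = \{\p_1,\dots,\p_k\}$, set $\q_0 \coloneqq \q$, and for each $j$ let $\q_j = (C, L^{(j)}_\rho, s^{(j)}_\rho, c^{(j)}_m)$ be the quasimap obtained from $\q_{j-1}$ by applying \Cref{prop: uniqueness of degree of a basepoint} at the point $\p_j$: thus $\p_j$ is not a basepoint of $\q_j$, and $L^{(j)}_\rho, s^{(j)}_\rho, c^{(j)}_m$ differ from those of $\q_{j-1}$ by a twist down by the class $\beta_{\p_j,\q_{j-1}}$, concentrated at $\p_j$. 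After $k$ steps I want to recognise $\q_k$ as the regular extension $\qreg$ and read off the formulas by composing the twists.

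First I would check that the modification performed at $\p_j$ is \emph{local at $\p_j$}, hence leaves intact the data that govern the degrees at the other basepoints. Concretely, for $i \ne j$ the sheaf $\ccO_C(-\beta_{\p_j}\cdot[D_\rho]\,\p_j)$ is canonically identified with $\ccO_C$ near $\p_i$, and a local coordinate at $\p_j$ is invertible there; therefore $C_{\p_i}$, the set $V_{\p_i}$, and the integers $\ord_{\p_i}(s^{(j)}_\rho\mid_{C_{\p_i}})$ are unchanged, and the rational map to $X$ on $C\setminus B$ is unchanged as well. Since, by \Cref{proposition: existence choice of sigma general case} and the uniqueness in \Cref{prop: uniqueness of degree of a basepoint}, the degree of a basepoint is determined precisely by these data, it follows that $\beta_{\p_i,\q_j} = \beta_{\p_i,\q_{j-1}}$ for every $i \ne j$. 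Inductively this gives $\beta_{\p_i,\q_{j-1}} = \beta_{\p_i,\q}$ for all $i$ and $j$, so that at every stage one twists down by the \emph{original} degree $\betap$, and $B_{\q_j} = \{\p_{j+1},\dots,\p_k\}$; in particular $\q_k$ has no basepoints. This locality of $\betap$ is the only real content of the argument; everything else is bookkeeping, which is why the result is recorded as a corollary.

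To finish, $\q_k$ being basepoint-free is an honest morphism $C\to X$, and since the successive twists do not alter the induced map on $C\setminus B$ it agrees with $\q$ there; by uniqueness of the regular extension (\Cref{def: regular extension}) we conclude $\q_k = \qreg$. It then remains to unwind the recursion: because the twists at $\p_1,\dots,\p_k$ are supported at pairwise distinct points, the tensor factors for the line bundles, the coordinate powers for the sections, and the trivialisations $\psi_{-m,\p_j,\betap}$ for the $c_m$'s simply accumulate over $\p\in B$, yielding
\[
    L'_\rho = L_\rho\otimes\ccO_C\Big(-\sum_{\p\in B}\betap\cdot[D_\rho]\,\p\Big), \qquad
    s'_\rho = s_\rho\otimes\prod_{\p\in B}\coordp^{-\betap\cdot[D_\rho]}, \qquad
    c'_m = c_m\otimes\Big(\bigotimes_{\p\in B}\psi_{-m,\p,\betap}\Big),
\]
as asserted; implicitly this also shows that the order of removal is irrelevant.
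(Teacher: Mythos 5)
Your argument is correct and matches the paper's intended reasoning: the paper records this statement as an immediate consequence of \Cref{prop: uniqueness of degree of a basepoint}, applied successively at each basepoint (exactly as in the proof of item \eqref{item: degree as regular + basepoints} of \Cref{prop: properties degree of a basepoint}). Your only addition is to spell out the locality bookkeeping --- that a twist at $\p_j$ leaves the vanishing data, hence the degrees, at the other basepoints unchanged, and that the induced map on $C\setminus B$ is untouched --- which is precisely what the paper treats as immediate.
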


\begin{corollary}\label{lem: equality of quasimaps}
    Let $\qone,\qtwo$ be quasimaps to $X$ with the same underlying curve. Then $\qone = \qtwo$ if and only if the following conditions hold
    \begin{enumerate}
        \item\label{item: same extension} $\qonereg=\qtworeg$
        \item\label{item: same basepoints} $B_{\qone} = B_{\qtwo}$ and
        \item\label{item: same degrees at basepoints} $\beta_{\p,\qone} = \beta_{\p,\qtwo}$ for all $\p\in B_{\qone} = B_{\qtwo}$.
    \end{enumerate}
\end{corollary}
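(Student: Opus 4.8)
The forward implication is immediate: the regular extension (\Cref{def: regular extension}), the set of basepoints, and the degree of a quasimap at a basepoint (\Cref{def: degree of a basepoint}) are all intrinsically attached to a quasimap, so if $\qone=\qtwo$ then the three listed conditions hold. The content of the statement is the converse, and the plan is to observe that \Cref{cor: expression regular extension} can be run backwards: it exhibits the line bundle--section--trivialization data of a quasimap $\q$ as an explicit twist, supported at the basepoints $\p\in B$, of the data of its regular extension $\qreg$ by the classes $\betap$, and each twisting operation involved is invertible.

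Concretely, write $(C,L'_\rho,s'_\rho,c'_m)=\qreg$. Tensoring by $\ccO_C(\betap\cdot[D_\rho]\,\p)$ is inverse to tensoring by $\ccO_C(-\betap\cdot[D_\rho]\,\p)$, and $\psi_{m,\p,\betap}\otimes\psi_{-m,\p,\betap}=\id$ by the compatibility in \Cref{rmk: compatible isos}. Hence the formulas of \Cref{cor: expression regular extension} invert to
\begin{align*}
    L_\rho &= L'_\rho\otimes \ccO_C\Big(\sum_{\p\in B}\betap\cdot [D_\rho]\ \p\Big),\\
    s_\rho &= s'_\rho\otimes \prod_{\p\in B} \coordp^{\,\betap\cdot [D_\rho]},\\
    c_m &= c'_m\otimes \Big(\bigotimes_{\p\in B} \psi_{m,\p,\betap}\Big),
\end{align*}
where $\coordp$ is a local coordinate at $\p$. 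In other words, $\q$ is recovered from $\qreg$, the set $B$, and the classes $(\betap)_{\p\in B}$ (together with a choice of local coordinate at each basepoint). To finish, assume $\qone,\qtwo$ satisfy the three conditions; after replacing $\qone$ by an equivalent quasimap we may assume $\qonereg$ and $\qtworeg$ have literally the same underlying data. Put $B\coloneqq B_{\qone}=B_{\qtwo}$ and fix one local coordinate $\coordp$ at each $\p\in B$ on the common source curve. Feeding the common data $\qonereg=\qtworeg$, $B$, and $\beta_{\p,\qone}=\beta_{\p,\qtwo}$ into the displayed formulas yields identical line bundles, sections and trivializations for $\qone$ and $\qtwo$, so $\qone=\qtwo$.

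I do not expect a genuine obstacle here: once \Cref{cor: expression regular extension} is in hand, the statement is essentially formal, since inverting ``$Y=X\otimes(\text{invertible twist})$'' to ``$X=Y\otimes(\text{inverse twist})$'' is automatic and \Cref{cor: expression regular extension} already guarantees that the data in question assemble into an honest quasimap. The only bookkeeping point worth a word is that the recovered section $s_\rho$ might seem to depend on the auxiliary coordinate $\coordp$; this dissolves either by noting that a change of $\coordp$ differs by a unit near $\p$ and so only replaces $\q$ by an equivalent quasimap, or by phrasing the inversion as a coordinate-independent identity of rational sections of $L_\rho$.
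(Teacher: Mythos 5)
Your proposal is correct and follows essentially the same route as the paper, whose proof simply invokes the explicit twisting formulas of \Cref{cor: expression regular extension}; you merely spell out the (automatic) inversion of those formulas and the harmless dependence on the choice of local coordinates. Nothing further is needed.
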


\begin{proof}
    It follows from the explicit expression of $\qonereg$ and $\qtworeg$ in \Cref{cor: expression regular extension}. 
\end{proof}

Recall that a quasimap $\q$ has an associated regular extension $\qreg$ \Cref{def: regular extension}. In the following result we denote the degree $\beta_{\qreg}$ of $\qreg$ simply by $\beta_\reg$.

\begin{proposition}\label{prop: properties degree of a basepoint}
    Let $\q=(C,L_\rho,s_\rho,c_m)$ be a quasimap to $X$ of degree $\beta$. 
    The following holds
    \begin{enumerate}
        \item $\betap = 0$ if and only if $\p\notin B$,
        \item $\betap$ is effective for all $\p\in C$,
        \item\label{item: degree as regular + basepoints} $\beta = \beta_\reg + \sum_{\p\in B}\betap$.
    \end{enumerate}
\end{proposition}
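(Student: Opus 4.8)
The plan is to deduce all three statements from the explicit description of the regular extension given in \Cref{cor: expression regular extension}, together with the characterization of $\betap$ as a class of the form $\beta(\p,\sigma)$ satisfying the equalities and inequalities in \Cref{proposition: existence choice of sigma general case}.

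\textbf{Parts (1) and (2).} First I would prove (1) and (2) simultaneously. Fix a smooth point $\p\in C$ and choose, via \Cref{proposition: existence choice of sigma general case}, a cone $\sigma\in\Sigma(\dim X)$ with $\vanishp\subseteq\sigma(1)$ such that $\betap = \beta(\p,\sigma)$ and $\ord_\p(s_\rho\mid_{\compp}) = \betap\cdot[D_\rho]$ for all $\rho\notin\sigma(1)$, while $\ord_\p(s_\rho\mid_{\compp})\geq\betap\cdot[D_\rho]$ for all $\rho\in\Sigma(1)$. For effectivity: exactly as in the uniqueness argument in the proof of \Cref{prop: uniqueness of degree of a basepoint}, any ample class on $X$ can be written with non-negative coefficients in the basis $\{[D_\rho]\colon\rho\notin\sigma(1)\}$ (by \Cref{lem: generators Pic}, and positivity of the coefficients since the $\ord_\p$ are non-negative wherever they are finite, i.e.\ on $\rho\notin\sigma(1)$ since $\vanishp\subseteq\sigma(1)$), so $\betap$ pairs non-negatively with every ample, hence every nef, class; thus $\betap$ lies in the dual of the nef cone, which is exactly the cone of effective curve classes. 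This gives (2). For (1): if $\p\notin B$ then $\betap = 0$ by convention if $\p$ is nodal, and if $\p$ is a smooth non-basepoint then the quasimap $\q$ itself already witnesses non-degeneracy at $\p$, so $\betap$ may be taken to be $\beta(\p,\sigma_0)$ for a maximal cone $\sigma_0$ with $s_\rho(\p)\neq 0$ for all $\rho\notin\sigma_0$; then $\ord_\p(s_\rho\mid_{\compp}) = 0$ for all $\rho\notin\sigma_0(1)$, forcing $\betap\cdot[D_\rho] = 0$ on a basis of $\Pic(X)$, hence $\betap = 0$. Conversely, if $\betap = 0$ then by \Cref{prop: uniqueness of degree of a basepoint} the modified data $\qpr$ equals $\q$, and $\p$ is not a basepoint of $\qpr$, so $\p\notin B$.

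\textbf{Part (3).} For the degree identity, I would compute directly from \Cref{cor: expression regular extension}: for every ray $\rho\in\Sigma(1)$,
\[
    \deg(L'_\rho) = \deg(L_\rho) - \sum_{\p\in B}\betap\cdot[D_\rho],
\]
since twisting by $\ccO_C(-\sum_{\p\in B}(\betap\cdot[D_\rho])\,\p)$ changes the degree by exactly $-\sum_{\p\in B}\betap\cdot[D_\rho]$. By \Cref{def: degree quasimap}, $\beta\cdot[D_\rho] = \deg(L_\rho)$ and $\beta_\reg\cdot[D_\rho] = \deg(L'_\rho)$, so
\[
    \beta\cdot[D_\rho] = \beta_\reg\cdot[D_\rho] + \Bigl(\sum_{\p\in B}\betap\Bigr)\cdot[D_\rho]
\]
for all $\rho\in\Sigma(1)$. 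Since the classes $[D_\rho]$ generate $\Pic(X) = A^1(X)$, and $A_1(X)$ is dual to $A^1(X)$ by the perfect pairing for smooth proper $X$, this forces $\beta = \beta_\reg + \sum_{\p\in B}\betap$ in $A_1(X)$.

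\textbf{Expected obstacle.} Most of this is bookkeeping; the only genuinely non-formal point is the effectivity claim in (2), but it is already carried out verbatim inside the proof of \Cref{prop: uniqueness of degree of a basepoint} (the passage from "$\betap$ non-negative on ample classes" to "$\betap$ effective" via the nef cone being the closure of the ample cone and effective curves being dual to nef divisors). A minor care point is that in \Cref{cor: expression regular extension} the sum $\sum_{\p\in B}\betap\cdot[D_\rho]$ is always finite and non-negative termwise, so the twisted sections $s'_\rho$ are genuine sections and the degree computation is literal; this is guaranteed by part (2) and by the fact that $B$ is finite. I would also note in passing that $\qreg$ is exactly the iterated application of \Cref{prop: uniqueness of degree of a basepoint} over all basepoints, which is why \Cref{cor: expression regular extension} applies.
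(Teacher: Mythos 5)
Your proof is correct and follows essentially the same route as the paper: parts (1) and (3) are deduced from the uniqueness characterization in \Cref{prop: uniqueness of degree of a basepoint} (equivalently its iterated form \Cref{cor: expression regular extension}), and effectivity comes from pairing non-negatively with nef classes written in the basis $\{[D_\rho]\colon\rho\notin\sigma(1)\}$ and invoking the duality between the nef and Mori cones, exactly as in the paper's uniqueness argument. One small slip in your closing ``care point'': the terms $\betap\cdot[D_\rho]$ need \emph{not} be non-negative (e.g.\ $\betap=E$ on $\Bl_0\bbP^2$ has $E\cdot E=-1$), but this is harmless since the regularity of the $s'_\rho$ is guaranteed by the inequalities $\ord_\p(s_\rho\mid_{\compp})\geq\betap\cdot[D_\rho]$ and the degree of a twist by $\ccO_C(d\,\p)$ shifts by $d$ regardless of the sign of $d$.
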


\begin{proof}
\begin{enumerate}
    \item If $\p$ is singular, then $\betap = 0$ by \Cref{def: degree of a basepoint} and $\p\notin B$ by \Cref{def: quasimap}. For a smooth point $\p$, the result follows from \Cref{prop: uniqueness of degree of a basepoint}.
    \item The classes $\{[D_\rho]\colon \rho\in\Sigma(1)\}$ generate the effective cone 
    by \cite[Lemma 15.1.8]{CLS}. We know that $\betap = \beta(\p,\sigma)$ for some cone $\sigma$. By construction, the classes $\beta(\p,\sigma)$ lie in the dual of the cone of effective divisors, therefore also on the dual of the nef cone, so they must be effective by \cite[Theorem 6.3.22]{CLS}.
    \item Applying \Cref{prop: uniqueness of degree of a basepoint} successively to each basepoint, the result is a quasimap $\qpr$ without basepoints, which must then be the regular extension $\qreg$.\qedhere
\end{enumerate}
\end{proof}

Let $\iota\colon X \to Y$ be a closed embedding between smooth projective toric varieties. Recall the morphism $\ibar$ induced by $\iota$ between (prestable) toric quasimaps to $X$ and $Y$ (\Cref{constr: functoriality_qmaps} and \Cref{rmk: functoriality_same_maps_qmaps}).
We have the following compatibility with the degree of a basepoint.

\begin{lemma}\label{lem: degree of basepoint after composition} \label{lem: pushforward degree of a basepoint}
    Let $\ibar \colon \Qpre(X,\beta)\to \Qpre(Y,\iota_\ast \beta)$ , let $\q=(C,L_\rho,s_\rho,c_m)$ be a quasimap to $X$ of class $\beta$.
    For every 
    point $\p\in C$ we have the following equality in $A_1(Y)$:
    \[
        \iota_\ast(\betapq) = \beta_{\p,\ibar(\q)}.
    \]
\end{lemma}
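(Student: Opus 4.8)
The statement is a compatibility between the pushforward of curve classes and the operation $\ibar$ on quasimaps. The natural strategy is to reduce everything to the explicit description of $\ibar$ in \Cref{constr: functoriality_qmaps} together with the characterization of the degree of a basepoint via \Cref{prop: uniqueness of degree of a basepoint}. First I would dispose of the trivial cases: if $\p$ is a nodal point then $\betapq = 0$ by \Cref{def: degree of a basepoint}, and $\p$ is also nodal in the underlying curve of $\ibar(\q)$ (the curve is unchanged by \Cref{rmk: no_stabilization}), so both sides vanish. Hence we may assume $\p$ is a smooth point. If $\p$ is not a basepoint of $\q$, then $\betapq = 0$ by \Cref{prop: properties degree of a basepoint}, and one checks directly from the formulas $s'_\tau = \mu_{\underline a^\tau}\prod_\rho s_\rho^{a_\rho^\tau}$ in \Cref{eq: functoriality_qmaps} that $\p$ is not a basepoint of $\ibar(\q)$ either (using condition (2) on the polynomials $P_\tau$, which guarantees non-degeneracy is preserved); so again both sides are $0$.

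\textbf{Main case.} Assume $\p$ is a smooth basepoint. Fix a local coordinate $\coord$ at $\p$. By \Cref{prop: uniqueness of degree of a basepoint} applied to $\q$, the twisted data $\qpr = (C, L'_\rho, s'_\rho, c'_m)$ with $L'_\rho = L_\rho\otimes\ccO_C(-\betapq\cdot[D_\rho]\,\p)$, $s'_\rho = s_\rho\otimes\coord^{-\betapq\cdot[D_\rho]}$, $c'_m = c_m\otimes\psi_{-m,\p,\betapq}$ is a quasimap of which $\p$ is \emph{not} a basepoint. The key observation is that $\ibar$ commutes with this twisting operation: applying \Cref{constr: functoriality_qmaps} to $\qpr$ produces, in the $\tau$-factor, the line bundle $\bigotimes_\rho (L'_\rho)^{\otimes a_\rho^\tau} = \bigotimes_\rho L_\rho^{\otimes a_\rho^\tau}\otimes\ccO_C\!\left(-\big(\sum_\rho a_\rho^\tau\,\betapq\cdot[D_\rho]\big)\p\right)$ and the section $\mu_{\underline a^\tau}\prod_\rho (s'_\rho)^{a_\rho^\tau} = \big(\mu_{\underline a^\tau}\prod_\rho s_\rho^{a_\rho^\tau}\big)\otimes\coord^{-\sum_\rho a_\rho^\tau\,\betapq\cdot[D_\rho]}$, with a compatible adjustment of the trivialization. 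Now $\sum_\rho a_\rho^\tau\,\betapq\cdot[D_\rho] = \betapq\cdot\big(\sum_\rho a_\rho^\tau[D_\rho]\big) = \betapq\cdot\iota^\ast[D^Y_\tau] = (\iota_\ast\betapq)\cdot[D^Y_\tau]$, where the middle equality is \Cref{rmk: coefficientes_a_explained} and the last is the projection formula. Therefore $\ibar(\qpr)$ is exactly the twist of $\ibar(\q)$ by the class $\iota_\ast\betapq$ in the sense of \Cref{prop: uniqueness of degree of a basepoint} (one should check the trivializations match, which is a bookkeeping computation with the isomorphisms $\psi$: since $c'_m = c_m\otimes\psi_{-m,\p,\betapq}$ and $c'_{m_Y}$ is read off from $c'_{m_X}$ by the same rule as in \Cref{constr: functoriality_qmaps}, compatibility of the $\psi$'s under $m_X\mapsto m_Y$ follows from the defining relation $\scalar{m_X,u_\rho}=\sum_\tau a_\rho^\tau\scalar{m_Y,u_\tau}$ and \Cref{rmk: compatible isos}). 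Since $\p$ is not a basepoint of $\ibar(\qpr)$ (as $\ibar(\qpr) = \ibar$ applied to a quasimap with no basepoint at $\p$, and non-degeneracy at $\p$ is preserved by $\ibar$ as in the previous paragraph), the uniqueness in \Cref{prop: uniqueness of degree of a basepoint} applied to $\ibar(\q)$ forces $\beta_{\p,\ibar(\q)} = \iota_\ast\betapq$.

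\textbf{Main obstacle.} The only genuinely delicate point is verifying that the trivializations $c'_{m_Y}$ obtained from $\ibar(\qpr)$ agree with those of the $\iota_\ast\betapq$-twist of $\ibar(\q)$, i.e. that the $\psi$-isomorphisms are compatible under pushforward. This is not conceptually hard but requires carefully tracing the identification $M_Y\to M_X$, $m_Y\mapsto m_X$ from \Cref{constr: functoriality_qmaps} through the definition of $\psi_{m,\p,\beta}$ in \Cref{rmk: compatible isos}; the relation $\sum_{\rho}(\betapq\cdot[D_\rho])\,u_\rho = 0$ in $N_X$ pushes forward to $\sum_\tau((\iota_\ast\betapq)\cdot[D^Y_\tau])\,u_\tau = 0$ in $N_Y$, and the two trivializations are built from these two vanishing relations compatibly. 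Everything else is a direct substitution into the formulas of \Cref{constr: functoriality_qmaps} and the projection formula.
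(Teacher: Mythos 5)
Your proof is correct and follows essentially the same route as the paper: the paper also disposes of the non-smooth case by definition and then verifies, via \Cref{prop: uniqueness of degree of a basepoint}, \Cref{rmk: coefficientes_a_explained} and \Cref{eq: functoriality_qmaps}, that twisting $\q$ by $\betapq$ and then applying $\ibar$ coincides with twisting $\ibar(\q)$ by $\iota_\ast\betapq$, concluding by the uniqueness statement. Your write-up simply makes explicit the projection-formula computation and the $\psi$-trivialization bookkeeping that the paper leaves as ``one can check''.
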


\begin{proof}
    If $\p$ is not smooth, both terms are 0 by definition. If $\p$ is smooth, one can check, using the explicit descriptions in \Cref{prop: uniqueness of degree of a basepoint}, \Cref{rmk: coefficientes_a_explained} 
    and \Cref{eq: functoriality_qmaps}, that first twisting $\q$ by $\beta_{\p,\q}$ at $\p$ as in \Cref{prop: uniqueness of degree of a basepoint} and then applying $\ibar$ is the same as twisting $\ibar(\q)$ by $\iota_\ast \betapq$ at $\p$. Therefore, both $\iota_\ast \betapq$ and $\beta_{\p,\ibar(\q)}$ satisfy the uniqueness in \Cref{prop: uniqueness of degree of a basepoint} applied to $\ibar(\q)$. 
\end{proof}

\subsection{The relation between degree and length}

Given a prestable and not necessarily toric quasimap $\q$ with underlying curve $C$, the authors of \cite[Def. 7.1.1]{C-FKM} assign to every point $\p\in C$ a number $\ell(\p)$,
called the \textit{length of the quasimap $\q$ at the point $\p$}, as follows.

Given $X=W\GIT G$ with $W=\Spec(A)$, a character $\theta$ on $G$ and a quasimap $\q=(C,P,u)$ with $P$ a principal $G$-bundle on $C$ and $u$ a section of the fibre bundle $P\times_G W\to C$, let
    \begin{equation}\label{eq: CFKM_definition_length}
        \ell(\p)\coloneqq \min \left\{\frac{\ord_{\p}(u^*s)}{m} \colon s\in H^0(W,L_{m\theta})^G, u^*s\neq 0, m>0\right\}.
    \end{equation}
    If the character $\theta$ is chosen such that $H^0(W,L_\theta)^G$ generates $\oplus_{m\geq 0} H^0(W,L_{m\theta})^G$ as an algebra over $A^G$, then we can replace $\ord_{\p}(u^*s)/m$ by $\ord_{\p}(u^*s)$. Note that 
    \[
        \left\{s \colon s\in H^0(W,L_{m\theta})^G, m>0 \right\}
    \]
    is the ideal inside $A$ cutting out the scheme theoretic unstable locus $W^{\operatorname{us}}$ inside $W$.

Toric varieties have a natural GIT presentation, whose unstable locus can be described combinatorially. We use this fact to show that, for toric quasimaps, the degree $\betap$ recovers the length $\ell(\p)$.

\begin{lemma}\label{prop: degree_generalizes_length}
    Let $q$ be a quasimap to a smooth projective toric variety $X$ with underlying curve $C$. For every $x\in C$, the length $\ell(x)$ of $\q$ at $\p$ can be recovered combinatorially from the degree $\betap$ of $\q$ at $\p$ as follows:
    \begin{equation}\label{eq: length for toric}
        \ell(\p) = \min\left\{ \sum_{\rho\notin\sigma(1)} \betap \cdot [D_\rho] \colon \sigma \in \Sigma_{\max}, \vanishp\subseteq \sigma(1)\right\}.
    \end{equation}
\end{lemma}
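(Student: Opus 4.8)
The plan is to unravel both sides of \eqref{eq: length for toric} using the combinatorial description of the unstable locus of the standard GIT presentation \eqref{equation:GIT_presentation} of $X$, and to recognize the formula on the right as exactly the minimum appearing in \eqref{eq: CFKM_definition_length}. First I would recall that $X = (\bbA^{\Sigma(1)}\setminus Z(\Sigma))\GIT G$ with $G = \Hom(A^1(X),\bbC^*)$, and that under the identification $\Pic(X) = \widehat{G}$ one has $L_{m\theta} = \ccO_X(mD)$ for $D$ the divisor with class $m\theta$; the $G$-invariant sections of $L_{m\theta}$ on $\bbA^{\Sigma(1)}$ are the degree-$mD$ monomials in the Cox ring $S^X = \bbC[z_\rho]$. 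By \eqref{eq: Z Sigma primitive collections} the ideal cutting out the unstable locus $Z(\Sigma)$ is $B(\Sigma) = \langle z^{\hat\sigma} : \sigma\in\Sigma_{\max}\rangle$ with $z^{\hat\sigma} = \prod_{\rho\notin\sigma(1)} z_\rho$. Choosing $\theta$ so that $H^0(L_\theta)^G$ generates the section ring — which we may, since $X$ is projective — the length simplifies to $\ell(\p) = \min\{\ord_\p(\q^*s) : s \text{ a nonzero monomial in }B(\Sigma)\}$, and since the minimum of an order of vanishing over a set of monomials is attained at a monomial of minimal total degree, it suffices to take $s$ ranging over the generators $z^{\hat\sigma}$.

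Next I would compute $\ord_\p(\q^*(z^{\hat\sigma}))$ for a fixed maximal cone $\sigma$. Pulling back, $\q^*(z^{\hat\sigma}) = \prod_{\rho\notin\sigma(1)} s_\rho$ (a section of $\bigotimes_{\rho\notin\sigma(1)} L_\rho$ on $C$), so its order of vanishing at $\p$, computed on the component $\compp$, is $\sum_{\rho\notin\sigma(1)} \ord_\p(s_\rho\mid_{\compp})$, \emph{provided} none of these restrictions vanishes identically on $\compp$, i.e.\ provided $\vanishp\subseteq\sigma(1)$; otherwise $\q^*(z^{\hat\sigma})$ vanishes identically on $\compp$ and contributes $\infty$, so such $\sigma$ may be discarded from the minimum — this matches the constraint $\vanishp\subseteq\sigma(1)$ in \eqref{eq: length for toric}. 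It then remains to identify $\sum_{\rho\notin\sigma(1)}\ord_\p(s_\rho\mid_{\compp})$ with $\sum_{\rho\notin\sigma(1)}\betap\cdot[D_\rho]$ for the cones $\sigma$ relevant to the minimum. For \emph{the particular} cone $\sigma$ furnished by \Cref{proposition: existence choice of sigma general case}, part~\eqref{item:general_equalities} gives $\ord_\p(s_\rho\mid_{\compp}) = \betap\cdot[D_\rho]$ for every $\rho\notin\sigma(1)$, so the two sums agree on the nose; hence the right-hand side of \eqref{eq: length for toric} is $\leq \ell(\p)$. For the reverse, I would use part~\eqref{item:general_inequalities}: for an arbitrary maximal cone $\sigma$ with $\vanishp\subseteq\sigma(1)$ we have $\ord_\p(s_\rho\mid_{\compp})\geq \betap\cdot[D_\rho]$ termwise, hence $\sum_{\rho\notin\sigma(1)}\ord_\p(s_\rho\mid_{\compp}) \geq \sum_{\rho\notin\sigma(1)}\betap\cdot[D_\rho]$, so $\ell(\p)$, being a minimum of the left-hand sums, is $\geq$ the right-hand side of \eqref{eq: length for toric}. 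Combining the two inequalities gives the claimed equality.

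The step I expect to be the main obstacle is the bookkeeping around identically-vanishing sections and the role of the auxiliary large integers $\lambda_\rho$: one must be careful that the \emph{value} of $\betap$ does not depend on these choices (which is exactly the content of the uniqueness in \Cref{prop: uniqueness of degree of a basepoint}), and that the $\infty$-conventions on orders of vanishing interact correctly with both the minimum in \eqref{eq: CFKM_definition_length} and the constraint $\vanishp\subseteq\sigma(1)$. A secondary technical point is justifying the reduction from all $G$-invariant sections of all $L_{m\theta}$ to the finite generating set $\{z^{\hat\sigma}\}$, for which I would invoke that $B(\Sigma)$ is a monomial ideal with these generators and that $\ord_\p$ is additive and monotone, so the minimizing invariant section can always be taken among the $z^{\hat\sigma}$; one should also check that no issue arises from $u^*s = 0$ being excluded in \eqref{eq: CFKM_definition_length}, which is precisely why the cones with $\vanishp\not\subseteq\sigma(1)$ drop out.
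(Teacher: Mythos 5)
Your overall route is the same as the paper's: rewrite the CFKM length via the GIT presentation as $\ell(\p)=\min\{\ord_\p(s^{\hat\sigma}\mid_{\compp})\colon \sigma\in\Sigma_{\max},\ s^{\hat\sigma}\mid_{\compp}\neq 0\}$ with $s^{\hat\sigma}=\prod_{\rho\notin\sigma(1)}s_\rho$, observe that $s^{\hat\sigma}\mid_{\compp}\neq 0$ is exactly the condition $\vanishp\subseteq\sigma(1)$, and then compare $\sum_{\rho\notin\sigma(1)}\ord_\p(s_\rho\mid_{\compp})$ with $\sum_{\rho\notin\sigma(1)}\betap\cdot[D_\rho]$ using \Cref{proposition: existence choice of sigma general case}. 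Up to that comparison your argument coincides with the paper's proof, including the handling of identically vanishing sections and the reduction from all invariant sections to the generators $z^{\hat\sigma}$.

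The gap is in the final comparison: your two concluding inequalities are in fact the same inequality. The termwise bound $\ord_\p(s_\rho\mid_{\compp})\geq\betap\cdot[D_\rho]$ correctly gives $\ell(\p)\geq$ the right-hand side of \eqref{eq: length for toric}; but the equality of the two sums at the special cone $\sigma_0$ of \Cref{proposition: existence choice of sigma general case} only yields $\ell(\p)\leq\sum_{\rho\notin\sigma_0(1)}\betap\cdot[D_\rho]$, not ``RHS $\leq\ell(\p)$'' as you state, and the direction $\ell(\p)\leq$ RHS is never established. It is also the genuinely delicate direction: the right-hand side is a minimum over \emph{all} maximal cones containing $\vanishp$, and since $\betap\cdot[D_\rho]$ can be negative this minimum can be strictly smaller than its value at $\sigma_0$. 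Concretely, for $X=\Bl_0\bbP^2$ and a smooth point with $(\ord_\p(s_i))=(0,1,1,0)$ (entry 1 of \Cref{tab: list betaps}, so $\betap=E$ and $\vanishp=\emptyset$), the cone $\sigma_{0,1}$ contributes $E\cdot[D_2]+E\cdot[D_3]=1-1=0$ to the minimum, while the minimum of $\sum_{\rho\notin\sigma(1)}\ord_\p(s_\rho)$ over all four maximal cones is $1$. So to close your argument you must show that the minimum on the right of \eqref{eq: length for toric} is attained at (or bounded below by its value at) a cone computing $\betap$, or restrict the cones over which the minimum is taken; this is precisely the point where the paper's own proof also concludes very quickly by invoking $\betap=\beta(\p,\sigma_0)$, and the example above shows that this step requires a real additional argument rather than bookkeeping about the $\infty$-conventions, which is where you predicted the difficulty would lie.
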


\begin{proof}
    If $\p$ is singular, then $\betap=0$ by definition and $\ell(\p)=0$ because $\p$ is not a basepoint. Therefore, we can assume that $\p$ is smooth, so that $\betap$ is defined by the conditions in \Cref{prop: uniqueness of degree of a basepoint}.

    The toric variety $X_\Sigma$ has a quotient presentation, see \Cref{equation:GIT_presentation}. Let $\bbA^{\Sigma(1)} = \Spec(S)$, with $S = \bbC[z_\rho]_{\rho\in\Sigma(1)}$. For each cone $\sigma \in \Sigma$, let 
    \[
        z^{\hat{\sigma}} = \prod_{\rho\notin\sigma(1)} z_\rho \in S.
    \]
    If we choose a character $\theta$ on $G=\Hom(A^1(X),\bbC^*)$ coming from an ample divisor class, then the unstable locus $Z(\Sigma)$ inside $\bbA^{\Sigma(1)}$ is cut out by the ideal 
    \[
        B(\Sigma) = \left\langle z^{\hat{\sigma}} \colon \sigma\in\Sigma_{\max} \right\rangle \subset S.
    \]
    We can then rewrite \eqref{eq: CFKM_definition_length} for a prestable quasimap $\q = (C,L_\rho,s_\rho,c_m)$ to a toric variety $X_\Sigma$ as
    \begin{equation}\label{eq:expresion_length}
        \ell(\p) = \min \{ \ord_\p (s^{\hat{\sigma}}\mid_{\compp}) \colon \sigma\in\Sigma_{\max}, s^{\hat{\sigma}}\mid_{\compp} \neq 0\}
    \end{equation}
    where
    \begin{equation}\label{eq: def s to the sigma}
        s^{\hat{\sigma}} = \prod_{\rho\notin\sigma(1)} s_\rho \in H^0(C,\otimes_{\rho\notin \sigma(1)} L_\rho).
    \end{equation}
    Note that the section $s^{\hat{\sigma}}$ vanishes identically on $\compp$ if and only if for some $\rho\notin \sigma(1)$  the section $s_\rho$ vanishes identically on $\compp$. It follows that
    \begin{equation}\label{eq:vanishing_section_cone}
        s^{\hat{\sigma}}\mid_{\compp} \neq 0 \iff \vanishp\subseteq \sigma(1).
    \end{equation}

    Using \Cref{eq:expresion_length,eq: def s to the sigma,eq:vanishing_section_cone}, we have that
    \begin{equation}\label{eq: reformulation length}
        \ell(\p) = \min\left\{ \sum_{\rho\notin\sigma(1)} \ord_{\p}(s_\rho) \colon \sigma \in \Sigma_{\max}, \vanishp\subseteq \sigma(1)\right\}.
    \end{equation}
    To conclude, we need to show that \Cref{eq: reformulation length} equals the right hand side in \Cref{eq: length for toric}. One inequality follows immediately from \Cref{eq: inequality order betap}. On the other hand, in \Cref{eq: reformulation length} we can replace $\ord_{\p}(s_\rho)$ by $\beta(\p, \sigma) \cdot [D_\rho]$
    by the definition of $\beta(\p, \sigma)$ in  \Cref{constr: degree of basepoint no vanishing}. This finishes the proof since we saw in the proof of \Cref{prop: uniqueness of degree of a basepoint} that the degree $\betap$ equals $\beta(\p,\sigma)$ for some $\sigma \in \Sigma_{\max}$ with $\vanishp \subseteq \sigma(1)$.  
\end{proof}

\section{Embeddings of toric varieties and quasimap spaces}\label{sec: embeddings of toric varieties}

A morphism $\iota\colon X \to Y$ between smooth projective varieties induces a morphism
\[
    \imap: \Mgn(X,\beta)\to \Mgn(Y,\iota_\ast \beta),
\]
via composition, which is a closed embedding when $\iota$ is. Similarly, 
if $X$ and $Y$ are toric, $\iota$
induces a morphism
\[
    \ibar: \Qgn(X,\beta)\to \Qgn(Y,\iota_\ast \beta),
\]
described in \Cref{constr: functoriality_qmaps}. However, $\ibar$ may not be a closed embedding, even if $\iota$ is, see \Cref{ex: quasimaps do not embed}. Motivated by this example, we introduce a class of morphisms, that we call \textit{epic}, in \Cref{subsec: epic morphisms}. We show that $\ibar$ is a closed embedding if $\iota$ is an epic closed embedding in \Cref{cor: injective on A1 implies closed embedding}, and that every smooth projective toric variety admits an epic embedding into a product of projective spaces (\Cref{thm: toric is projectively epic}).

\subsection{Quasimaps do not embed along closed embeddings}\label{subsec: quasimaps do not embed}

Let $\iota\colon X\hookrightarrow Y$ be a closed embedding between smooth projective toric varieties. The induced morphism 
\[
    \ibar\colon \Qgn (X,\beta)\to \Qgn (Y,\iota_\ast \beta)
\]
is not a closed embedding in general. Here is an example, which appeared in \cite[Remark 2.3.3]{Battistella_Nabijou}, where it is attributed to Ciocan-Fontanine. 

\begin{example}\label{ex: quasimaps do not embed}
    Let $s\colon \bbP^1\times\bbP^1 \hookrightarrow \bbP^3$ be the Segre embedding, given in coordinates by 
	\[
		s([x\colon y], [z\colon w]) = [xz\colon xw\colon yz\colon yw].
	\]
    Note that $s$ is a toric closed embedding. Consider the following two quasimaps from $\bbP^1$, with homogeneous coordinates $[s\colon t]$, to $\bbP^1\times\bbP^1$ of degree $(2,2)$ 
	\begin{align*}
	    \qone([s\colon t]) &= [s^2\colon st],[st\colon t^2],& 
		\qtwo([s\colon t]) &= [st\colon t^2],[s^2\colon st].
	\end{align*}
    Both of them have the same image under $\Qs$
	\[
		\Qs(\qone) ([s\colon t]) = \Qs(\qtwo) ([s\colon t]) = [s^3t\colon s^2t^2\colon s^2t^2 \colon st^3],
	\]
    which is a quasimap to $\bbP^3$ of degree $4$. This means that $\Qs$
    is not a monomorphism, therefore it is not a closed embedding. Note that the quasimaps above are not stable, but the same conclusion holds if we add marked points. 
    
    Our contribution is the following: we can use the degree of a basepoint (see \Cref{def: degree of a basepoint}) to explain why $\Qs$ is not a monomorphism.  Both $\qone$ and $\qtwo$ have the same regular extension $\qreg$, namely, the diagonal embedding of $\bbP^1$ in $\bbP^1 \times \bbP^1$,
    \[
        \qreg([s\colon t]) = [s\colon t], [s\colon t].
    \]
    Let $0=[1\colon 0]$ and let $\infty = [0\colon 1]$ in $\bbP^1$. Then $\qone$ and $\qtwo$ both have the same basepoints $B_\qone = B_\qtwo = \{0,\infty\}$, but we can still distinguish $\qone$ from $\qtwo$ because the degrees at the basepoints differ:
    \begin{align*}
        \beta_{0, \qone} &= (0,1), & \beta_{\infty, \qone} &= (1,0),\\
        \beta_{0, \qtwo} &= (1,0), & \beta_{\infty, \qtwo} &= (0,1)
    \end{align*}
    if we identify $A_1(\bbP^1\times \bbP^1)$ with $\bbZ^2$. Applying $\Qs$, both quasimaps not only have the same regular extension and the same set of basepoints, but also
    \begin{equation}\label{eq: equality_degrees_example}
        \beta_{0, \Qs(\qone)} = \beta_{\infty, \Qs(\qone)} = \beta_{0, \Qs(\qtwo)} = \beta_{\infty, \Qs(\qtwo)} = 1 \in A_1(\bbP^3).
    \end{equation}
    Therefore, we can no longer distinguish them, by \Cref{lem: equality of quasimaps}. Note that, by \Cref{lem: degree of basepoint after composition}, the degree of a basepoint is compatible with $\Qs$ and $s_\ast$, in the sense that $\beta_{x, \Qs(\q_i)} = s_\ast \beta_{x,\q_i}$; therefore, the equalities in \Cref{eq: equality_degrees_example} follow from the fact that $s_\ast(1,0) = s_\ast(0,1) = 1$.
\end{example}

\Cref{ex: quasimaps do not embed} suggests that the reason why $\Qs$ is not a monomorphism (and therefore the reason why it is not a closed embedding, since it is proper) is that $s_\ast \colon A_1(\bbP^1\times\bbP^1)\to A_1(\bbP^3)$ is not injective. This observation motivates \Cref{thm: fibres of ibar} and \Cref{cor: injective on A1 implies closed embedding}.

\subsection{Epic morphisms}\label{subsec: epic morphisms}

Motivated by \Cref{ex: quasimaps do not embed}, we introduce the following class of morphisms.

\begin{proposition}\label{prop: equivalence epic morphisms}
    Let $X$ and $Y$ be smooth projective varieties whose Picard group is free of finite rank and let   $f\colon X \to Y$. The following are equivalent:
    \begin{enumerate}
        \item\label{item: epi pic} $f^*\colon \Pic(Y) \to \Pic(X)$ is surjective,
        \item\label{item: epi A1} $f^*\colon A^1(Y) \to A^1(X)$ is surjective,
        \item\label{item: mono A1} $f_*\colon A_1(X) \to A_1(Y)$ is injective.
    \end{enumerate}
\end{proposition}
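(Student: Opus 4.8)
The plan is to prove the three equivalences by establishing $(\ref{item: epi pic}) \Leftrightarrow (\ref{item: epi A1})$ first, which is essentially a matter of unwinding definitions, and then $(\ref{item: epi A1}) \Leftrightarrow (\ref{item: mono A1})$, which uses the perfect pairing between $A_1$ and $A^1$ on a smooth projective variety recalled in \Cref{background: toric varieties}.

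\textbf{Step 1: $(\ref{item: epi pic}) \Leftrightarrow (\ref{item: epi A1})$.} On a smooth variety the group $A^1(X)$ of codimension-one cycles modulo rational equivalence is canonically isomorphic to $\Pic(X)$, and the pullback $f^*$ on Chow groups corresponds to the pullback on Picard groups under this identification; the same holds for $Y$. Hence surjectivity of $f^*\colon \Pic(Y)\to\Pic(X)$ is literally the same statement as surjectivity of $f^*\colon A^1(Y)\to A^1(X)$. I would state this in one or two sentences citing the identification $A^1 \simeq \Pic$ already invoked in the background section.

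\textbf{Step 2: $(\ref{item: epi A1}) \Leftrightarrow (\ref{item: mono A1})$.} Here I would use the perfect pairings $A_1(X)\times A^1(X)\to\bbZ$ and $A_1(Y)\times A^1(Y)\to\bbZ$, together with the projection formula $f_*(\alpha)\cdot D = \alpha \cdot f^*(D)$ for $\alpha\in A_1(X)$ and $D\in A^1(Y)$. Since both Picard groups (hence both $A^1$) are free of finite rank, the pairings identify $A_1(X)$ with $\Hom(A^1(X),\bbZ)$ and $A_1(Y)$ with $\Hom(A^1(Y),\bbZ)$, and under these identifications the projection formula says precisely that $f_*\colon A_1(X)\to A_1(Y)$ is the transpose (dual homomorphism) of $f^*\colon A^1(Y)\to A^1(X)$. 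It then remains to invoke the general fact that, for a homomorphism $\phi\colon A\to B$ of finitely generated free abelian groups, the transpose $\phi^\vee\colon B^\vee\to A^\vee$ is surjective if and only if $\phi$ is injective \emph{with torsion-free cokernel}; since $A^1(X)$ is assumed free, $\Coker(f^*)$ is automatically torsion-free only after we are careful, so in fact the cleanest route is: $\phi^\vee$ surjective $\iff$ $\phi$ is a split injection $\iff$ ($\phi$ injective and $\Coker\phi$ free); but because the target $A^1(X)$ is free, any subgroup quotient complication is handled by noting that surjectivity of $\phi^\vee$ forces $\phi$ to be split injective, and conversely $\phi^\vee = (f_*)^\vee$ applied to the injective $f_*$ need only be checked directly. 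I expect the main obstacle to be exactly this linear-algebra subtlety about transposes and torsion, so I would phrase Step 2 to avoid it: argue $(\ref{item: epi A1})\Rightarrow(\ref{item: mono A1})$ by showing that if $\alpha\in A_1(X)$ has $f_*\alpha=0$ then $\alpha\cdot f^*D = f_*\alpha\cdot D = 0$ for all $D\in A^1(Y)$, and since $f^*$ is surjective this means $\alpha$ pairs to zero with all of $A^1(X)$, hence $\alpha=0$ by non-degeneracy; and conversely $(\ref{item: mono A1})\Rightarrow(\ref{item: epi A1})$ by a dual counting/saturation argument, observing that the image of $f^*$ is a subgroup of the free group $A^1(X)$ whose annihilator in $A_1(X)$ is $\Ker(f_*)=0$, so $\mathrm{Im}(f^*)$ spans $A^1(X)\otimes\bbQ$ and is saturated (any $D\in A^1(X)$ with $kD\in\mathrm{Im}(f^*)$ pairs integrally with all of $A_1(X)$ consistently), forcing $\mathrm{Im}(f^*)=A^1(X)$.

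The genuinely delicate point, and the one I would spell out most carefully, is the surjectivity direction $(\ref{item: mono A1})\Rightarrow(\ref{item: epi A1})$: injectivity of $f_*$ only gives that $\mathrm{Im}(f^*)$ has full rank, and one must additionally argue it is \emph{saturated} in $A^1(X)$ to conclude it is everything. For this I would use the perfect pairing once more: if $D\in A^1(X)$ and $kD\in\mathrm{Im}(f^*)$ for some $k>0$, write $kD = f^*E$; then for every $\alpha\in A_1(X)$ the integer $D\cdot\alpha$ satisfies $k(D\cdot\alpha) = f^*E\cdot\alpha = E\cdot f_*\alpha$, so the linear functional $\alpha\mapsto E\cdot f_*\alpha$ on $A_1(X)$ is divisible by $k$; since $f_*$ is injective and $A_1(Y)$ is free, $E\cdot f_*(-)$ being $k$-divisible as a functional forces $E$ itself to be $k$-divisible modulo the annihilator of $\mathrm{Im}(f_*)$, and a short argument recovers $E' \in A^1(Y)$ with $f^*E' = D$. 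This completes the cycle of implications.
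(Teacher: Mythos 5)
Your Step 1 and the direction \Cref{item: epi A1}$\Rightarrow$\Cref{item: mono A1} of Step 2 are correct and coincide with the paper's own (very terse) argument: identify $A^1$ with $\Pic$, use the projection formula to see that $f_*$ and $f^*$ are adjoint for the intersection pairings, and invoke non-degeneracy. The genuine gap is in the converse direction \Cref{item: mono A1}$\Rightarrow$\Cref{item: epi A1}, exactly the step you yourself flag as delicate. Injectivity of $f_*$ only gives, via the pairings, that $\mathrm{Im}(f^*)$ has finite index in $A^1(X)$; to get surjectivity you must show $\mathrm{Im}(f^*)$ is saturated, and your argument for saturation does not go through. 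From $kD=f^*E$ you correctly deduce that the functional $\alpha\mapsto E\cdot f_*\alpha$ on $A_1(X)$ takes values in $k\bbZ$, but this only says that $E$ pairs $k$-divisibly with the sublattice $\mathrm{Im}(f_*)\subseteq A_1(Y)$; unless $\mathrm{Im}(f_*)$ is itself saturated in $A_1(Y)$ --- which is the same difficulty one level up and is not implied by injectivity --- you cannot conclude that $E$ is $k$-divisible in any useful sense, and the promised ``short argument recovering $E'$'' does not exist.

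In fact no argument can close this gap at the stated level of generality, because the implication fails: let $f\colon \bbP^1\to\bbP^2$ be the degree-two Veronese (conic) embedding. Both varieties are smooth and projective with Picard group free of rank one; $f_*\colon A_1(\bbP^1)\simeq\bbZ\to A_1(\bbP^2)\simeq\bbZ$ is multiplication by $2$, hence injective, while $f^*\colon \Pic(\bbP^2)\to\Pic(\bbP^1)$ sends $\ccO_{\bbP^2}(1)$ to $\ccO_{\bbP^1}(2)$ and is not surjective. In your notation: $D=\ccO_{\bbP^1}(1)$, $k=2$, $E=\ccO_{\bbP^2}(1)$; the functional $E\cdot f_*(-)$ is $2$-divisible, yet $E$ is not, and $D\notin\mathrm{Im}(f^*)$. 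So \Cref{item: mono A1}$\Rightarrow$\Cref{item: epi A1} needs an extra hypothesis, e.g.\ that $\Coker(f_*)$ is torsion-free (equivalently that $f_*$ has saturated image), which does not follow from injectivity alone. For what it is worth, the paper's own one-line proof (``$f_*$ and $f^*$ are transpose'') elides exactly the same point, so your instinct about where the difficulty lies was sound; note that the directions actually used later, namely \Cref{item: epi pic}$\Rightarrow$\Cref{item: mono A1} via \Cref{thm: toric is projectively epic} and \Cref{cor: injective on A1 implies closed embedding}, are the unproblematic ones.
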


\begin{proof}
    The equivalence between \Cref{item: epi pic} and \Cref{item: epi A1} holds because for a smooth variety $\Pic$ and $A^1$ are isomorphic. 
    Finally, \Cref{item: epi A1} and \Cref{item: mono A1} follows from the assumption that the Picard groups are free of finite rank, because $f_\ast$ and $f^\ast$ are transpose by the projection formula.
\end{proof}

\begin{definition}\label{def: epic morphism}
    A morphism $f \colon X \to Y$ of smooth projective varieties whose Picard group is free of finite rank is \textit{epic} if it satisfies any of the equivalent conditions in \Cref{prop: equivalence epic morphisms}. The name is motivated by \Cref{item: epi pic} in \Cref{prop: equivalence epic morphisms}: epic morphisms induce an epimorphism on Picard groups.
\end{definition}

We are particularly interested in varieties that admit an epic closed embedding in a product of projective spaces.

\begin{proposition}\label{prop: equivalence projectively epic}
    Let $X$ be a smooth projective variety such that $\Pic(X)$ is free of finite rank. The following are equivalent:
    \begin{enumerate}
        \item\label{item: exists epic embedding} there exist $k, n_1, \ldots, n_k \geq 1$ and an epic closed embedding $X \hookrightarrow \bbP^{n_1}\times \ldots \times \bbP^{n_k}$,
        \item\label{item: exists bpf generating set} $X$ admits a generating set of $\Pic(X)$ consisting of basepoint free divisors.
    \end{enumerate}
\end{proposition}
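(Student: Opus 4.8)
The plan is to prove the equivalence by exhibiting explicit constructions in both directions, using the standard fact that a line bundle is basepoint free if and only if it is the pullback of $\mathcal{O}(1)$ under some morphism to a projective space.

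\textbf{From \Cref{item: exists epic embedding} to \Cref{item: exists bpf generating set}.} Suppose we have an epic closed embedding $\iota\colon X \hookrightarrow \bbP = \bbP^{n_1}\times\ldots\times\bbP^{n_k}$. For each $i\in\{1,\ldots,k\}$ let $H_i \in \Pic(\bbP)$ be the pullback of the hyperplane class under the $i$-th projection $\pi_i\colon \bbP \to \bbP^{n_i}$; these classes freely generate $\Pic(\bbP)$, and each is basepoint free (globally generated by the coordinate sections). Since $\iota$ is epic, $\iota^\ast\colon \Pic(\bbP)\to\Pic(X)$ is surjective, so the classes $\iota^\ast H_i = (\pi_i\circ\iota)^\ast\mathcal{O}(1)$ generate $\Pic(X)$. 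Moreover each $\iota^\ast H_i$ is basepoint free, being the pullback of a basepoint free class along a morphism. This furnishes the desired generating set.

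\textbf{From \Cref{item: exists bpf generating set} to \Cref{item: exists epic embedding}.} Suppose $L_1,\ldots,L_k$ are basepoint free divisor classes generating $\Pic(X)$. Each $L_i$ basepoint free means the sections of $L_i$ give a morphism $\varphi_i\colon X \to \bbP^{n_i}$ (with $n_i = \dim|L_i|$, which we may assume $\geq 1$ after discarding any trivial or pulled-back-from-a-point factors, or else replacing $L_i$ by $L_i$ plus a very ample class and adjusting the generating set — this bookkeeping is routine). Then I would consider the product morphism
\[
    \varphi = (\varphi_1,\ldots,\varphi_k,\psi)\colon X \longrightarrow \bbP^{n_1}\times\ldots\times\bbP^{n_k}\times\bbP^{N},
\]
where $\psi\colon X \hookrightarrow \bbP^N$ is any closed embedding into a single projective space (which exists since $X$ is projective). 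The morphism $\varphi$ is a closed embedding because one of its factors, $\psi$, already is. It is epic because $\varphi^\ast$ of the hyperplane classes in the first $k$ factors are precisely $L_1,\ldots,L_k$, which generate $\Pic(X)$ by hypothesis; hence $\varphi^\ast\colon \Pic(\bbP^{n_1}\times\ldots\times\bbP^{n_k}\times\bbP^N)\to\Pic(X)$ is surjective, which is condition \Cref{item: epi pic} of \Cref{prop: equivalence epic morphisms}. This produces an epic closed embedding of $X$ into a product of projective spaces, as required.

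\textbf{Main obstacle.} The conceptual content is entirely contained in the dictionary between basepoint free divisors and morphisms to projective space, so neither direction is genuinely hard. The one point that needs care is the forward-inclusion of the extra embedding factor $\psi$ in the second direction: if $X$ happens to have $\Pic(X)$ generated by basepoint free classes none of which is ample, the $\varphi_i$ alone need not separate points, so a priori $\prod\varphi_i$ is not a closed embedding. Appending a fixed closed embedding $\psi$ fixes this without disturbing epicness, since adding more factors only enlarges the source of $\varphi^\ast$. (One could alternatively absorb this by noting that some integer combination of the $L_i$ together with an ample class is again in the span, but the direct argument via $\psi$ is cleaner.) The only other thing to watch is the degenerate case $n_i = 0$, excluded by requiring $n_i\geq 1$ in \Cref{item: exists epic embedding}, which is harmless since an $L_i$ with $h^0 = 1$ is trivial and can be dropped from the generating set, and an $L_i$ giving a constant map contributes nothing.
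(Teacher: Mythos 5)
Your proof is correct and takes essentially the same route as the paper: one direction pulls back the hyperplane classes of the factors along the epic embedding, and the other combines the morphisms defined by the basepoint-free generators with an auxiliary factor that is already a closed embedding (the paper appends the complete linear system of a very ample class, which plays exactly the role of your $\psi$), deducing that the product map is a closed embedding and is epic because the generators are among the pullbacks of hyperplane classes.
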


\begin{proof}
    Suppose \Cref{item: exists epic embedding} holds. Let $\bbP = \bbP^{n_1}\times \ldots \times \bbP^{n_k}$ and let $L_i\in \Pic(X)$ be the pullback of $\ccO_{\bbP}(\bbP^{n_1}\times \ldots \times H_i \times \ldots \times \bbP^{n_k})$ with $H_i$ a hyperplane in $\bbP^{n_i}$. Then each $L_i$ is basepoint free and $\{L_i\}$ generate $\Pic(X)$ because the embedding is epic, so \Cref{item: exists bpf generating set} holds.

    Conversely, let $\ccS$ be a generating set $\ccS$ of $\Pic(X)$ whose elements are basepoint free, let $[A]\in \Pic(X)$ be a very ample class and let $\ccS' = \ccS\cup\{[A]\}$. Consider the induced morphism
		\[
			\iota \colon X \to \bbP\coloneqq \prod_{[D]\in \ccS'} \bbP(H^0(\ccO_X(D)))
		\]
	associated to the complete linear systems $\lvert D \rvert$, for $ [D]\in \ccS'$. Then $\iota^\ast$ is surjective on Picard groups because $\ccS'$ is a generating set. Finally, the diagonal arrow in the following diagram is a closed embedding, which implies that so is $\iota$.
	\[
		\begin{tikzcd}
			X\arrow{r}{\iota}\arrow{dr}[']{\lvert A \rvert} & \bbP \arrow{d}{\pi_A}\\
			& \bbP(H^0(\ccO_X(A)))
		\end{tikzcd}\qedhere
	\] 
\end{proof}

\begin{definition}\label{def: projectively epic}
    A smooth projective variety whose Picard group is free of finite rank is called \textit{projectively epic} if it satisfies any of the equivalenc conditions in \Cref{prop: equivalence projectively epic}.
\end{definition}

\begin{theorem}\label{thm: toric is projectively epic}\label{prop: embedding with surjection on Pic}\label{cor: embedding in P with iso on A1}
    Every smooth projective toric variety is projectively epic.
\end{theorem}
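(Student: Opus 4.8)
The strategy is to verify condition \eqref{item: exists bpf generating set} of \Cref{prop: equivalence projectively epic}: every smooth projective toric variety $X$ admits a generating set of $\Pic(X)$ consisting of basepoint-free divisor classes. Combined with \Cref{prop: equivalence projectively epic}, this immediately yields that $X$ is projectively epic, which is the assertion.

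First I would recall the standard toric dictionary for global generation. For a smooth projective toric variety $X_\Sigma$, a torus-invariant Cartier divisor $D = \sum_\rho a_\rho D_\rho$ is basepoint free (equivalently, globally generated) if and only if the associated piecewise-linear support function $\varphi_D$ is convex, equivalently if and only if the polytope $P_D = \{ m \in M_{\bbR} : \langle m, u_\rho \rangle \geq -a_\rho \ \forall \rho \}$ satisfies $\varphi_D(u_\rho) = -a_\rho$ for every ray; this is \cite[Theorem 6.1.7, Theorem 6.3.12]{CLS}. The key classical input is that nef line bundles on a complete toric variety are globally generated: the nef cone $\mathrm{Nef}(X) \subset A^1(X)_{\bbR}$ equals the cone of classes of globally generated (basepoint-free) invariant divisors, by \cite[Theorem 6.3.12]{CLS}.

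So the heart of the matter becomes: show that $\Pic(X) = A^1(X)$ is generated \emph{as a group} by classes lying in the nef cone. Since $X$ is projective, $\mathrm{Nef}(X)$ is a full-dimensional rational polyhedral cone in $A^1(X)_{\bbR}$ (full-dimensional because it contains an ample class and its neighbourhood). A full-dimensional cone in a lattice $A^1(X)$ spans $A^1(X)_{\bbR}$, and in fact the lattice points inside a full-dimensional rational cone generate a finite-index subgroup; to get the \emph{whole} group one argues as follows: pick an ample class $[A]$; for any class $[D] \in \Pic(X)$, the class $[D] + n[A]$ is ample (hence nef, hence basepoint free) for $n \gg 0$, because ampleness is an open condition and $\mathrm{Nef}(X)$ is full-dimensional — concretely, $[D] + n[A]$ lands in the ample cone once $n$ exceeds the finitely many intersection-pairing bounds against the generators of $\overline{\mathrm{NE}}(X)$, which is rational polyhedral for a projective toric variety \cite[Theorem 6.3.20]{CLS}. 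Then $[D] = \bigl([D] + n[A]\bigr) - n[A]$ exhibits $[D]$ as a $\bbZ$-combination of two basepoint-free classes. Hence $\{\,[E] : E \text{ basepoint free}\,\}$ generates $\Pic(X)$, establishing \eqref{item: exists bpf generating set}.

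\textbf{Main obstacle.} The only genuinely delicate point is confirming that ``$[D] + n[A]$ is ample for $n \gg 0$'' for an arbitrary (not necessarily effective) class $[D]$; this is where projectivity of $X$ is essential and is exactly Kleiman-type positivity, valid because $\overline{\mathrm{NE}}(X)$ is a rational polyhedral cone with finitely many extremal rays on a projective toric variety, so one only needs $\bigl([D]+n[A]\bigr)\cdot C > 0$ on those finitely many extremal curve classes $C$, which holds for $n$ larger than $\max_C\bigl(-[D]\cdot C\bigr)/\bigl([A]\cdot C\bigr)$. Everything else — the identification of nef with globally generated on toric varieties, and the reduction via \Cref{prop: equivalence projectively epic} — is a direct citation. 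I would also remark that one can bypass the abstract positivity argument entirely and give a purely combinatorial generating set: the classes $[D_\rho]$ need not be nef, but for each maximal cone $\sigma$ one can write down explicit nef divisors (e.g.\ sums of the nef generators dual to $\sigma$), though organizing these into a clean generating set is more fiddly than the one-line ample-perturbation argument, so the perturbation proof is the one I would present.
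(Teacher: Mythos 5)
Your proof is correct and follows the paper's overall strategy: both arguments verify condition \eqref{item: exists bpf generating set} of \Cref{prop: equivalence projectively epic} by producing a generating set of $\Pic(X)$ made of nef (hence, by \cite[Theorem 6.3.12]{CLS}, basepoint-free) classes. The only real difference lies in how generation is established. The paper applies Gordan's lemma to the full-dimensional rational polyhedral nef cone, which immediately yields a \emph{finite} generating set of the nef semigroup and hence of $\Pic(X)$; you instead use an ample-perturbation (toric Kleiman) argument, writing $[D] = ([D]+n[A]) - n[A]$ with $n$ large, justified by the finitely many extremal rays of the Mori cone. Your mechanism is correct and arguably more standard, but note one small point: the proof of \Cref{prop: equivalence projectively epic} embeds $X$ into a product indexed by the generating set, so that set must be finite; your argument as stated produces the (infinite) set of all basepoint-free classes. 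This is trivially repaired — either invoke that $\Pic(X)$ is free of finite rank so a finite subset of any generating set suffices, or apply your perturbation only to the elements of a $\bbZ$-basis of $\Pic(X)$ together with $[A]$ — but it is worth saying explicitly, since the finiteness that Gordan's lemma hands the paper for free is exactly what the subsequent embedding needs.
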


\begin{proof}
    The nef and Mori cones of a smooth projective toric variety $X$ are dual strongly convex rational polyhedral cones of full dimension by \cite[Theorem 6.3.12]{CLS}. In particular, the semigroup of divisor classes which are nef admits a finite generating set $\ccS$ by Gordan's lemma, which also generates $\Pic(X)$. Furthermore, the elements of $\ccS$ are basepoint free by \cite[Theorem 6.3.12]{CLS}, therefore \Cref{prop: equivalence projectively epic} holds, so $X$ is projectively epic.
\end{proof}

\begin{example}\label{ex: embeddings}
    Let $X = \Bl_0 \bbP^2$ be the blow-up of $\bbP^2$ at the origin, which is a toric variety whose fan is pictured in \Cref{fig: fan blowup2}. Let $\pi\colon X \to \bbP^2$ be the natural projection and let $H$ be the hyperplane class in $\Pic(\bbP^2)$. The group $\Pic(X)$ is generated by $L\coloneqq \pi^*H$ and the class $E$ of the exceptional divisor. Let $S = L-E$, which is the class of the strict transform of a line in $\bbP^2$. Then the nef cone of $X$ is generated by $S$ and $L$. 
    
    There are four toric boundary divisors in $X$, which we denote by $D_i = D_{\rho_i}$, with the conventions in \Cref{fig: fan blowup2}. They satisfy that
    \begin{align*}
        [D_0] &=L, &  
        [D_1] = [D_2] &=S, &
        [D_3] &=E. 
    \end{align*}
    Using this, together with the description of global sections of toric line bundles, one can easily describe in coordinates the following closed embeddings of $X$ into products of projective spaces induced by complete linear systems. Such morphisms will be written using the ring $\bbC[x_0,x_1,x_2,x_3]$ of homogeneous coordinates on $X$, with $x_i$ corresponding to $\rho_i$.

    The set $\{S,L\}$ generates the nef cone of $X$. Although none of the two classes is ample, the morphism
    \[
        \iota \colon X\hookrightarrow \bbP(H^0(\ccO_X(L)) \times \bbP(H^0(\ccO_X(S)) = \bbP^2\times \bbP^1
    \]
    is an epic closed embedding. One way to see that $\iota$ is indeed a closed embedding is to realize that this is the usual description of $X$ as a closed subvariety of $\bbP^2\times\bbP^1$. Its expression in coordinates is
    \begin{equation}\label{eq: embedding_blowup_P2xP1}
        \iota(x_0,x_1,x_2,x_3) = [x_0\colon x_1x_3 \colon x_2x_3], [x_1\colon x_2].
    \end{equation}
    This example illustrates that, in the notation of the proof of \Cref{prop: equivalence projectively epic}, it may not be necessary that $\ccS'$ contains a very ample class if the elements of $\ccS$ already induce a closed embedding.

    The set $\{S,S + L\}$ also generates $\Pic(X)$, both classes are nef and $S+L$ is ample (thus very ample).
    It induces a closed embedding
    \[
        i \colon X\hookrightarrow \bbP(H^0(\ccO_X(S)) \times \bbP(H^0(\ccO_X(S+L)) = \bbP^1\times \bbP^4
    \]
    with
    \[
        i(x_0,x_1,x_2,x_3) = [x_1\colon x_2], [x_0x_2\colon x_0x_1\colon x_2^2x_3\colon x_1x_2x_3 \colon x_1^2x_3].
    \]

    Similarly, one could take the set $\{S+L, L\}$, which also generates $\Pic(X)$, with $L$ nef and $S+L$ ample. The corresponding closed embedding is
    \[
        j \colon X\hookrightarrow \bbP(H^0(\ccO_X(S+L)) \times \bbP(H^0(\ccO_X(L)) = \bbP^4\times \bbP^2
    \]
    with
    \[
        j(x_0,x_1,x_2,x_3) = [x_0x_2\colon x_0x_1\colon x_2^2x_3\colon x_1x_2x_3 \colon x_1^2x_3], [x_0,x_2x_3,x_1x_3].
    \]
\end{example}

\subsection{A criterion for quasimaps to embed along an embedding}\label{subsec: criterion_qmaps_embedd}

We fix a closed embedding $\iota\colon X\hookrightarrow Y$ between smooth projective toric varieties for the rest of \Cref{subsec: criterion_qmaps_embedd} and we study the morphism
\[
    \ibar\colon \Qgn (X,\beta)\to \Qgn (Y,\iota_\ast \beta)
\]
on closed points. For that, we start with two lemmas, which apply more generally to (prestable) toric quasimaps, see \Cref{rmk: functoriality_same_maps_qmaps}.\\

Recall that we denote by $B_\q$ the set of basepoints of $\q$ (see \Cref{def: quasimap}). In \Cref{def: regular extension}, we introduced the notion of regular extension $\qreg$ of a quasimap $\q$ to $X$. In this section, we denote $\qreg$ by $r_X(\q)$. Similarly, $r_Y(\q')$ denotes the regular extension of a quasimap $\q'$ to $Y$.

\begin{lemma}\label{lem: commutativity diagram embedding and extension}
    For every quasimap $\q$ to $X$, we have that
    \[
        r_Y(\ibar \q) = \imap(r_X \q).
    \]
\end{lemma}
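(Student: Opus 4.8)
The plan is to check that the two regular morphisms $r_Y(\ibar \q)$ and $\imap(r_X \q)$ from the underlying curve $C$ of $\q$ to $Y$ agree on the dense open subset $U \coloneqq C\setminus B_\q$, and then conclude by separatedness of $Y$. First I would record that on $U$ the quasimap $\q$ is an honest regular morphism $\q|_U\colon U\to X$ (see the discussion after \Cref{def: quasimap}), so by the uniqueness of the regular extension in \Cref{def: regular extension} we have $r_X(\q)|_U = \q|_U$, and hence $\imap(r_X\q)|_U = \iota\circ(\q|_U)$.

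Next I would identify $\ibar\q|_U$. By \Cref{rmk: functoriality_same_maps_qmaps}, the description of $\ibar$ in terms of line bundle-section pairs coincides with that of $\imap$ on honest maps, which is nothing but post-composition with $\iota$; alternatively, one can argue directly from \Cref{constr: functoriality_qmaps} that the lift $\tilde\iota$ carries the Cox-coordinate lift of $\q|_U$ into $\bbA^{\Sigma_Y(1)}\setminus Z(\Sigma_Y)$ and realizes $\iota\circ(\q|_U)$. Either way, the restriction of $\ibar\q$ to $U$ is the honest morphism $\iota\circ(\q|_U)$; in particular $\ibar\q$ has no basepoints on $U$, so $U\cap B_{\ibar\q}=\emptyset$ and therefore $r_Y(\ibar\q)|_U = \ibar\q|_U = \iota\circ(\q|_U)$.

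Combining the two computations gives $r_Y(\ibar\q)|_U = \iota\circ(\q|_U) = \imap(r_X\q)|_U$. Since $B_\q$ is a finite set of smooth (hence non-generic) points of the curve $C$, the open set $U$ is dense, and $C$ is reduced; as $Y$ is separated, two morphisms $C\to Y$ agreeing on a dense open of a reduced scheme coincide. Hence $r_Y(\ibar\q) = \imap(r_X\q)$.

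The only step carrying genuine content is the identification $\ibar\q|_U = \iota\circ(\q|_U)$, i.e.\ that quasimap functoriality restricts to ordinary composition on the locus where the quasimap is an actual map; but this is essentially built into the construction of $\ibar$ via the lift $\tilde\iota$ of $\iota$ (\Cref{rmk: functoriality_same_maps_qmaps}), so I do not anticipate a real obstacle — the remainder is the standard ``agree on a dense open $\Rightarrow$ agree'' argument.
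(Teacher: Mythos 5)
Your proposal is correct and follows essentially the same route as the paper: both arguments observe that the two morphisms share the underlying curve $C$ (no stabilization, since $\iota$ is a closed embedding) and both extend the honest map $\iota\circ(\q\mid_{C\setminus B_\q})$ from the dense open $C\setminus B_\q$, hence coincide by uniqueness of the extension.
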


\begin{proof}
    The two maps have the same underlying curve $C$ by \Cref{rmk: no_stabilization} and they are both extensions of the map $\iota\circ (\q\mid_{C\setminus B})$ from the dense open $C\setminus B$ to $C$, so they must agree.
\end{proof} 

\begin{lemma}\label{lem: basepoints of ibar q}
    For every quasimap $\q$ to $X$, we have that
    \[
        B_\q = B_{\ibar (\q)}.
    \]
\end{lemma}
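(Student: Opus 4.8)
The plan is to prove both inclusions $B_\q \subseteq B_{\ibar(\q)}$ and $B_{\ibar(\q)} \subseteq B_\q$ by a direct analysis at each point $\p \in C$, using the combinatorial characterization of basepoints in terms of vanishing of sections together with the explicit description of $\ibar$ in \Cref{constr: functoriality_qmaps}. The key translation is: $\p$ is \emph{not} a basepoint of $\q = (C, L_\rho, s_\rho, c_m)$ if and only if there is a maximal cone $\sigma \in \Sigma_X$ such that $s_\rho(\p) \neq 0$ for all $\rho \notin \sigma(1)$; equivalently, by \Cref{eq: Z Sigma primitive collections}, if and only if the tuple $(s_\rho(\p))_{\rho}$ does not lie in $Z(\Sigma_X)$, i.e. for every primitive collection $\ccP$ of $\Sigma_X$ there is some $\rho \in \ccP$ with $s_\rho(\p) \neq 0$. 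The same holds for $\ibar(\q) = (C, L'_\tau, s'_\tau, c'_{m_Y})$ with $Z(\Sigma_Y)$ and the sections $s'_\tau = \mu_{\underline{a}^\tau}\prod_{\rho} s_\rho^{a_\rho^\tau}$.

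First I would handle the inclusion $B_{\ibar(\q)} \subseteq B_\q$, equivalently: if $\p \notin B_\q$ then $\p \notin B_{\ibar(\q)}$. This is essentially functoriality of the honest morphism $\tilde\iota$ on the stable locus. Concretely, if $(s_\rho(\p))_\rho \notin Z(\Sigma_X)$ in $\bbA^{\Sigma_X(1)}$, then by condition (2) on the polynomials $P_\tau$ defining $\iota$ (namely $(P_\tau(x_\rho)) \notin Z(\Sigma_Y)$ whenever $(x_\rho) \notin Z(\Sigma_X)$), the tuple $(P_\tau(s_\rho(\p)))_\tau \notin Z(\Sigma_Y)$. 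Now $s'_\tau(\p)$ is, up to the nonzero scalar $\mu_{\underline{a}^\tau}$, the monomial $\prod_\rho s_\rho(\p)^{a_\rho^\tau}$, which is one term of $P_\tau(s_\rho(\p))$; this is not literally the full polynomial, so a small argument is needed. The cleanest route is to invoke \Cref{lem: commutativity diagram embedding and extension}: away from $B_\q$ the quasimap $\q$ is a genuine morphism to $X$, so $\ibar(\q)$ restricted to $C \setminus B_\q$ is the genuine morphism $\iota \circ (\q|_{C\setminus B_\q})$ to $Y$, which has no basepoints there; hence $B_{\ibar(\q)} \subseteq B_\q$.

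For the reverse inclusion $B_\q \subseteq B_{\ibar(\q)}$, I would argue contrapositively: suppose $\p \notin B_{\ibar(\q)}$, so there is a maximal cone $\sigma_Y \in \Sigma_Y$ with $s'_\tau(\p) \neq 0$ for all $\tau \notin \sigma_Y(1)$. Since $s'_\tau = \mu_{\underline a^\tau}\prod_{\rho} s_\rho^{a_\rho^\tau}$ and $\mu_{\underline a^\tau} \neq 0$, this forces $s_\rho(\p) \neq 0$ for every $\rho$ with $a_\rho^\tau > 0$ for some $\tau \notin \sigma_Y(1)$. I then need to produce a maximal cone $\sigma_X \in \Sigma_X$ with $s_\rho(\p) \neq 0$ for all $\rho \notin \sigma_X(1)$. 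The key input is that $\iota$ is a \emph{closed embedding}: the lift $\tilde\iota\colon \bbA^{\Sigma_X(1)}\setminus Z(\Sigma_X) \to \bbA^{\Sigma_Y(1)}\setminus Z(\Sigma_Y)$ descends to the closed embedding $\iota$, and the basepoint set of a quasimap is precisely the locus where its (local) lift to $\bbA^{\Sigma_X(1)}$ lands in $Z(\Sigma_X)$, as recalled right after \Cref{def: quasimap}. Since $\iota$ is a closed embedding, $\iota^{-1}$ of the stable locus of $Y$ is exactly the stable locus of $X$, so a point maps into $Z(\Sigma_Y)$ under $\tilde\iota$ if and only if it already lies in $Z(\Sigma_X)$; applied pointwise to the local lifts of $\q$, this gives $B_{\ibar(\q)} = B_\q$ directly.

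The main obstacle is keeping the bookkeeping honest around the passage from the polynomials $P_\tau$ to the single monomials $s'_\tau$: a priori $\ibar(\q)$ depends on the \emph{choice} of monomial $\underline a^\tau$, yet its basepoint locus must not. The clean way around this, which I would use, is to avoid monomial-by-monomial comparison entirely and instead argue at the level of the GIT presentation: both $B_\q$ and $B_{\ibar(\q)}$ are characterized as the non-smooth locus of the associated rational map to $X$ (resp. $Y$), and since $\iota$ is a closed embedding the rational map to $X$ is regular at $\p$ exactly when its composite with $\iota$ (which, on $C\setminus B_\q$, is the rational map underlying $\ibar(\q)$ by \Cref{lem: commutativity diagram embedding and extension}) is regular at $\p$. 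This reduces everything to the elementary fact that a closed embedding is in particular a monomorphism, so $f$ extends over $\p$ iff $\iota\circ f$ does.

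\begin{proof}
    Both $B_\q$ and $B_{\ibar(\q)}$ consist of nonsingular points of $C$, so it suffices to check, for each nonsingular $\p\in C$, that $\p\in B_\q$ if and only if $\p\in B_{\ibar(\q)}$.

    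Suppose first that $\p\notin B_\q$. Then $\q$ is a genuine morphism to $X$ in a neighbourhood of $\p$, and by \Cref{lem: commutativity diagram embedding and extension} the quasimap $\ibar(\q)$ agrees with the genuine morphism $\iota\circ r_X(\q)$ on the dense open $C\setminus B_\q$, in particular near $\p$. A genuine morphism to $Y$ has no basepoints, so $\p\notin B_{\ibar(\q)}$. This proves $B_{\ibar(\q)}\subseteq B_\q$.

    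Conversely, suppose $\p\notin B_{\ibar(\q)}$. Working in a neighbourhood $U$ of $\p$ over which all the line bundles $L_\rho$ are trivial, the sections $s_\rho$ give a morphism $\tilde\q_U\colon U\to \bbA^{\Sigma_X(1)}$; by \cite[Thm 1.1]{Cox_functor} and the discussion after \Cref{def: quasimap}, $\p\in B_\q$ if and only if $\tilde\q_U(\p)\in Z(\Sigma_X)$. Composing with the lift $\tilde\iota$ of $\iota$, the description of $\ibar$ in \Cref{constr: functoriality_qmaps} shows that $\tilde\iota\circ \tilde\q_U$ is a local lift of $\ibar(\q)$ (after possibly shrinking $U$ and trivializing the $L'_\tau$), so $\p\in B_{\ibar(\q)}$ if and only if $\tilde\iota(\tilde\q_U(\p))\in Z(\Sigma_Y)$. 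Since $\iota$ is a closed embedding, its lift satisfies $\tilde\iota^{-1}(Z(\Sigma_Y)) = Z(\Sigma_X)$: indeed $\tilde\iota$ restricts to the closed embedding $\iota$ on the stable loci $\bbA^{\Sigma_X(1)}\setminus Z(\Sigma_X)\hookrightarrow \bbA^{\Sigma_Y(1)}\setminus Z(\Sigma_Y)$ by condition (2) on the polynomials $P_\tau$, hence no point outside $Z(\Sigma_X)$ is sent into $Z(\Sigma_Y)$, while any point of $Z(\Sigma_X)$ is sent into $Z(\Sigma_Y)$ because $\tilde\iota$ is defined by polynomials and sends the open stratum to the open stratum, so by continuity the closed stratum to the closed stratum. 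Therefore $\tilde\q_U(\p)\in Z(\Sigma_X)$, i.e. $\p\in B_\q$. This proves $B_\q\subseteq B_{\ibar(\q)}$, and hence $B_\q = B_{\ibar(\q)}$.
\end{proof}
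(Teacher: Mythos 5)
Your first inclusion is fine (it is exactly condition (2) on the polynomials $P_\tau$: the lift $\tilde\iota$ sends the stable locus into the stable locus, so a non-basepoint of $\q$ cannot become a basepoint of $\ibar(\q)$), but the hard inclusion $B_\q\subseteq B_{\ibar(\q)}$ is where your proof has a genuine gap, and it is a different route from the paper's, which deduces the lemma in one line from \Cref{lem: pushforward degree of a basepoint} and \Cref{prop: properties degree of a basepoint}: $\beta_{\p,\ibar(\q)}=\iota_\ast\betapq$, and the pushforward of a non-zero effective class along a closed embedding is non-zero. Your argument reduces everything to the claim $\tilde\iota\bigl(Z(\Sigma_X)\bigr)\subseteq Z(\Sigma_Y)$, and the only justification you give is ``$\tilde\iota$ is defined by polynomials and sends the open stratum to the open stratum, so by continuity the closed stratum to the closed stratum.'' That is a non sequitur: a continuous map sending a dense open set into an open set says nothing about where the complement goes. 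More tellingly, this step uses nothing about $\iota$ being a closed embedding, and the statement it asserts is false for general toric morphisms whose Cox lift does send stable points to stable points. Take the blow-down $\pi\colon\Bl_0\bbP^2\to\bbP^2$ with lift $\tilde\pi(x_0,x_1,x_2,x_3)=(x_0,x_2x_3,x_1x_3)$: it sends the stable locus to the stable locus, yet $(1,0,0,1)\in Z(\Sigma_{\Bl_0\bbP^2})$ (the primitive collection $\{\rho_1,\rho_2\}$) maps to $(1,0,0)\notin Z(\Sigma_{\bbP^2})$. This is precisely the phenomenon that basepoints with $\pi_\ast\betap=0$ disappear, so any correct proof of your claim must feed in a positivity input tied to $\iota$ being an embedding; your proof never does.

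The claim is in fact true for closed embeddings, but proving it is essentially the content of the lemma. One way: if $x\in Z(\Sigma_X)$ then some primitive collection $\ccP$ of $\Sigma_X$ has $x_\rho=0$ for all $\rho\in\ccP$; its primitive-relation class $\beta_\ccP$ is a non-zero effective class with $\beta_\ccP\cdot D^X_\rho\leq 0$ for $\rho\notin\ccP$. If $P_\tau(x)\neq 0$ then $P_\tau$ has a monomial supported away from the vanishing coordinates, so $\iota^\ast[D^Y_\tau]$ is a non-negative combination of classes $[D^X_\rho]$ with $\rho\notin\ccP$, whence $\iota_\ast\beta_\ccP\cdot D^Y_\tau\leq 0$. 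If $\tilde\iota(x)$ were stable, these $\tau$ would contain the complement of a maximal cone $\sigma\in\Sigma_Y$, and writing an ample class on $Y$ with non-negative coefficients in the basis $\{[D^Y_\tau]\colon\tau\notin\sigma(1)\}$ (as in the proof of \Cref{prop: uniqueness of degree of a basepoint}) gives $\iota_\ast\beta_\ccP\cdot A\leq 0$, contradicting $\beta_\ccP\cdot\iota^\ast A>0$, which uses that $\iota^\ast A$ is ample because $\iota$ is a closed embedding. Alternatively, simply invoke the paper's machinery (\Cref{lem: pushforward degree of a basepoint} plus \Cref{prop: properties degree of a basepoint}), which packages exactly this positivity. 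A further small point: your identification of $\tilde\iota\circ\tilde\q_U$ with a local lift of $\ibar(\q)$ requires reading $s'_\tau$ as the full polynomial $P_\tau(s_\rho)$ (the monomial line bundles being identified via the $c_m$), not the single monomial written in \Cref{eq: functoriality_qmaps}; this is the intended construction, but since your whole argument runs through the lift you should say it explicitly.
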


\begin{proof}
    Follows from \Cref{lem: pushforward degree of a basepoint} and \Cref{prop: properties degree of a basepoint} because the pushforward along a closed embedding of an effective non-zero class is a non-zero class.
\end{proof}

\begin{theorem}\label{thm: fibres of ibar}
    Let $\beta\in A_1(X)$ be an effective non-zero curve class and let $\q$ be an $n$-marked genus-$g$ quasimap to $Y$ of class $\iota_\ast \beta$ 
    with underlying curve $C$. Consider the morphism induced by $\iota$ on prestable quasimaps
    \[
        \ibar \colon \Qpre(X,\beta)\to \Qpre(Y,\iota_\ast \beta).
    \]
    Then the fibre $\ibar^{-1}(\q)$ of $\q$ along $\ibar$ is non-empty if and only if 
    \begin{enumerate}
        \item\label{item: regular extension factors} there exists a morphism $f\colon C \to X$ such that $\qreg = \iota \circ f$ and 
        \item\label{item: existence curve class + conditions} there exist effective non-zero classes $(\beta^\p)_{\p\in B_\q}$ in $A_1(X)$ such that
        \begin{enumerate}
            \item\label{item: pushforward agrees} $\iota_\ast(\beta^\p) = \betapq$ for all $\p\in B_\q$,
            \item\label{item: correct sum} $\beta = \beta_f + \sum_{\p\in B_\q} \beta^\p$ and
            \item\label{item: order condition} $\ord_\p(t_{\rho}) + \beta^\p \cdot D_\rho \geq 0$ for every $\rho\in \Sigma_X(1)$ and every $\p\in B_\q$
        \end{enumerate}
    \end{enumerate}
    with $\{t_\rho\}_{\rho\in\Sigma_X(1)}$ the underlying sections of $f$.

    Furthermore, the fibre $\ibar^{-1}(\q)$ is in one-to-one correspondence with the (finite) set of effective non-zero classes $(\beta^\p)_{\p\in B_\q}$ in $A_1(X)$ satisfying \Cref{item: pushforward agrees,item: correct sum,item: order condition}.

    If, moreover, $\q$ is stable, then so is every element in $\ibar^{-1}(\q)$.
\end{theorem}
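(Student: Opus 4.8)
The plan is to deduce everything from \Cref{lem: equality of quasimaps} together with the compatibilities \Cref{lem: commutativity diagram embedding and extension}, \Cref{lem: basepoints of ibar q} and \Cref{lem: pushforward degree of a basepoint}, using as the basic tool the operation inverse to taking a regular extension. Given a map $f=(C,L_\rho,t_\rho,c_m)$ to $X$, finitely many distinct nonsingular points $\p\in C$ away from the markings, and effective classes $\beta^\p\in A_1(X)$, I would define the \emph{twist of $f$ up by $(\beta^\p)$} by replacing each $L_\rho$ with $L_\rho\otimes\ccO_C(\sum_\p(\beta^\p\cdot[D_\rho])\,\p)$, each $t_\rho$ with $t_\rho\otimes\prod_\p z_\p^{\beta^\p\cdot[D_\rho]}$, and each $c_m$ accordingly via the isomorphisms of \Cref{rmk: compatible isos}; this reverses \Cref{cor: expression regular extension}. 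Compatibility and quasimap non-degeneracy hold automatically (away from the $\p$'s nothing is changed), so the result is a genuine prestable quasimap exactly when every twisted section is regular, i.e.\ when $\ord_\p(t_\rho)+\beta^\p\cdot[D_\rho]\geq 0$ for all $\rho$ and all $\p$ — which is precisely condition (2c) of the statement (automatic where $\beta^\p\cdot[D_\rho]\geq 0$, a genuine vanishing constraint where $\beta^\p\cdot[D_\rho]<0$, which can occur because $D_\rho$ need not be nef). Away from the $\p$'s the twisted-up quasimap agrees with $f$, so its regular extension is again $f$, its set of basepoints is $\{\p:\beta^\p\neq 0\}$, its class is $\beta_f+\sum_\p\beta^\p$, and — by the uniqueness clause of \Cref{prop: uniqueness of degree of a basepoint}, since untwisting it by $\beta^\p$ at a single $\p$ returns $f$ near $\p$, of which $\p$ is not a basepoint — its degree at each such $\p$ is exactly $\beta^\p$.

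The forward implication is then immediate. If $\q'\in\ibar^{-1}(\q)$, set $f\coloneqq r_X(\q')$; \Cref{lem: commutativity diagram embedding and extension} gives $\iota\circ f=\imap(r_X\q')=r_Y(\ibar\q')=\qreg$, which is condition (1), and \Cref{cor: expression regular extension} exhibits $\q'$ as $f$ twisted up by $(\beta_{\p,\q'})_{\p\in B_{\q'}}$, with $B_{\q'}=B_\q$ by \Cref{lem: basepoints of ibar q}. Now (2a) is \Cref{lem: pushforward degree of a basepoint}, (2b) is the third item of \Cref{prop: properties degree of a basepoint}, and (2c) holds because the sections of $\q'$ are regular. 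Conversely, given $f$ and effective non-zero $(\beta^\p)_{\p\in B_\q}$ satisfying (1) and (2a)--(2c), let $\q'$ be $f$ twisted up by $(\beta^\p)$: by (2c) it is a prestable quasimap, of class $\beta_f+\sum_\p\beta^\p=\beta$ by (2b). Then $\ibar(\q')=\q$ by \Cref{lem: equality of quasimaps}: both have source $C$; the same regular extension, since $r_Y(\ibar\q')=\imap(r_X\q')=\iota\circ f=\qreg=r_Y(\q)$; the same basepoints, $B_{\ibar(\q')}=B_{\q'}=B_\q$; and the same degree at each $\p\in B_\q$, namely $\iota_\ast\beta^\p=\betapq$ by (2a).

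For the bijection: $\iota$ is a monomorphism, so the $f$ in (1) is unique and $r_X(\q')=f$ for every $\q'\in\ibar^{-1}(\q)$; since also $B_{\q'}=B_\q$, \Cref{cor: expression regular extension} shows $\q'$ is determined by the tuple $(\beta_{\p,\q'})_{\p\in B_\q}$. The two constructions above are mutually inverse bijections between $\ibar^{-1}(\q)$ and the set of tuples of effective non-zero classes satisfying (2a)--(2c). This set is finite because (2b) forces $0\leq\beta^\p\cdot H\leq(\beta-\beta_f)\cdot H$ for a fixed ample $H$ on $X$, and the Mori cone of $X$ contains only finitely many lattice points of bounded degree against $H$.

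For the stability claim I would pull back the polarization from $Y$. If $A=\sum_\tau\alpha_\tau D^Y_\tau$ is a very ample divisor on $Y$, then $\iota^\ast A$ is very ample on $X$, and by \Cref{rmk: coefficientes_a_explained} it equals $\sum_\rho\big(\sum_\tau\alpha_\tau a^\tau_\rho\big)[D^X_\rho]$; taking $\alpha^X_\rho\coloneqq\sum_\tau\alpha_\tau a^\tau_\rho$ as the polarization on $X$ and using \eqref{eq: functoriality_qmaps}, the line bundle $\bigotimes_\rho L_\rho^{\otimes\alpha^X_\rho}$ attached to $\q'$ equals the line bundle $\bigotimes_\tau(L'_\tau)^{\otimes\alpha_\tau}$ attached to $\q=\ibar(\q')$. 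Since stability is independent of the polarization, the ampleness of $\omega_C(p_1+\cdots+p_n)\otimes\ccL^\epsilon$ for all $\epsilon>0$ is literally the same requirement for $\q'$ as for $\q$, and $\q$ is stable by hypothesis. I expect the main obstacle to be the careful bookkeeping of the twisting-up operation — above all, verifying that (2c) is exactly the regularity of the twisted sections and that the twisted-up quasimap has degree precisely $\beta^\p$, not smaller, at each $\p$ — which is where the uniqueness clause of \Cref{prop: uniqueness of degree of a basepoint} does the essential work.
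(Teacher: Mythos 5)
Your proposal is correct and follows essentially the same route as the paper's proof: the forward direction via \Cref{lem: commutativity diagram embedding and extension}, \Cref{lem: basepoints of ibar q}, \Cref{lem: pushforward degree of a basepoint} and \Cref{prop: properties degree of a basepoint}, and the converse and the bijection via the inverse of the twisting in \Cref{cor: expression regular extension} combined with the uniqueness statements of \Cref{prop: uniqueness of degree of a basepoint} and \Cref{lem: equality of quasimaps}. The only deviations are harmless: you prove the stability claim by pulling back the polarization along $\iota$ (the paper instead observes that components contracted by an element of the fibre are also contracted by $\q$ and invokes stability of $\q$), and you supply an explicit finiteness argument for the set of tuples $(\beta^\p)$, which the paper leaves implicit.
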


\begin{proof}
    Firstly, let $\qone$ be a quasimap to $X$ of class $\beta$ such that $\ibar\qone = \q$. By \Cref{lem: commutativity diagram embedding and extension},
    \[
        \imap(r_X \qone) = r_Y(\ibar \qone) = r_Y(\q).
    \]
    This means that $r_Y(\q)=\qreg$ factors through $X$ and we can take $f \coloneqq r_X(\qone) = \qonereg$. Using \Cref{lem: basepoints of ibar q}, the classes $\beta^\p \coloneqq \beta_{\p,\qone}$, for $\p\in B_q$, satisfy the desired conditions. Indeed, \Cref{item: pushforward agrees} follows from \Cref{lem: degree of basepoint after composition}, \Cref{item: correct sum} from \Cref{prop: properties degree of a basepoint} and \Cref{item: order condition} from \Cref{cor: expression regular extension}, since 
    \[
        \ord_\p(t_{\rho}) + \beta_{\p,\qone} \cdot D_\rho = \ord_{\p}(s_\rho^{(1)}) \geq 0,
    \]
    where $s_\rho^{(1)}$ denote the sections of $\qone$.

    Conversely, let $\q$ be a quasimap to $Y$ of class $\iota_\ast \beta$ and let $f$ and $\beta^\p$, for $\p\in B_\q$, be as in the statement.  
    Since $\imap$ is a closed embedding, the morphism $f$ is determined uniquely by the condition $\qreg = \iota\circ f$. We show how to use the data in \Cref{item: existence curve class + conditions} to construct a unique quasimap $\qone$ to $X$ of class $\beta$ such that $\ibar\qone = \q$. We fix that $\qonereg = f$, and then we twist the line bundle-section pairs associated to $f$ at each basepoint $\p\in B_\q$ as in \Cref{cor: expression regular extension}, but changing the minus sign by a plus sign. This makes $\p$ a basepoint of $\qonereg$ with class $\beta^\p$ for every $\p\in B_\q$. By \Cref{lem: equality of quasimaps}, the previous construction clearly determines a quasimap $\qone$ uniquely.
    
    Finally, we show that if $\q$ is stable, then so is $\qone$. This follows from the fact that $\q$ and $\qone$ have the same curve (and marks) and the fact that components contracted by $\qone$ are also contracted by $\q$. Indeed, if a component $C'$ of the underlying curve of $\q$ and $\qone$ has degree $\beta_{C',\qone} = 0$ with respect to $\qone$, then by \Cref{prop: properties degree of a basepoint} and \Cref{lem: degree of basepoint after composition} we have that $\beta_{C',\q} = i_\ast \beta_{C',\qone} = 0$. But then stability of $\q$ ensures that $C'$ must have enough special points.
\end{proof}

\begin{corollary}\label{cor: injective on A1 implies closed embedding}
    Let $\iota\colon X\hookrightarrow Y$ be an epic closed embedding. Then
    the morphism
    \[
        \ibar \colon \Qgn(X,\beta)\to \Qgn(Y,\iota_\ast \beta)
    \]
    induced by $\iota$ is a closed embedding over $\bbC$. 
\end{corollary}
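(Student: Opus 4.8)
The statement to prove is that if $\iota\colon X\hookrightarrow Y$ is an epic closed embedding of smooth projective toric varieties, then $\ibar\colon \Qgn(X,\beta)\to \Qgn(Y,\iota_\ast\beta)$ is a closed embedding. Since both moduli spaces are proper Deligne--Mumford stacks, $\ibar$ is proper, and a proper monomorphism of algebraic stacks is a closed immersion; so it suffices to show that $\ibar$ is a monomorphism, i.e.\ that it is injective on $T$-points for every scheme $T$ and, more precisely, that the diagonal $\Qgn(X,\beta)\to \Qgn(X,\beta)\times_{\Qgn(Y,\iota_\ast\beta)}\Qgn(X,\beta)$ is an isomorphism. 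I would reduce this to: (a) $\ibar$ induces an injection on geometric points, and (b) $\ibar$ is unramified (equivalently, injective on tangent spaces / formally unramified), since proper + unramified + injective on points $\Rightarrow$ closed immersion.

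\textbf{Injectivity on geometric points.} This is the heart, and it is exactly what \Cref{thm: fibres of ibar} delivers. Let $\q$ be a geometric point of $\Qgn(Y,\iota_\ast\beta)$. By \Cref{thm: fibres of ibar}, the fibre $\ibar^{-1}(\q)$ is in bijection with the set of tuples $(\beta^\p)_{\p\in B_\q}$ of effective nonzero classes in $A_1(X)$ satisfying conditions \ref{item: pushforward agrees}--\ref{item: order condition}. Now I use that $\iota$ is epic: by \Cref{prop: equivalence epic morphisms}, $\iota_\ast\colon A_1(X)\to A_1(Y)$ is injective. Hence condition \ref{item: pushforward agrees}, namely $\iota_\ast(\beta^\p)=\betapq$, determines each $\beta^\p$ uniquely. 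Therefore the tuple $(\beta^\p)$ — if it exists at all — is unique, so $\ibar^{-1}(\q)$ has at most one element. (One should also note that \Cref{thm: fibres of ibar} as stated concerns the prestable moduli stacks, but the last sentence of that theorem gives that stability is preserved, so the fibre over a stable $\q$ inside $\Qgn(Y,\iota_\ast\beta)$ is identified with the fibre in the prestable spaces; this gives injectivity on geometric points of the stable moduli stacks.)

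\textbf{From geometric points to a monomorphism / closed embedding.} The above shows $\ibar$ is injective on isomorphism classes of geometric points; since $\ibar$ is represented by a closed embedding after composing with the functorial map to maps — more honestly, since automorphisms of a quasimap to $X$ inject into automorphisms of the image quasimap to $Y$ (the underlying curve and marked points are unchanged by \Cref{rmk: no_stabilization}, and an automorphism of the curve fixing the line-bundle--section data downstairs also fixes it upstairs as the data of $\q$ over $X$ is recovered from that over $Y$ together with the uniquely-determined degrees at basepoints, by \Cref{lem: equality of quasimaps}) — $\ibar$ is moreover injective on automorphism groups, hence a monomorphism of stacks. A proper monomorphism of Deligne--Mumford stacks (both being of finite type over $\bbC$) is a closed immersion, which is the claim. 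The main obstacle is the bookkeeping in the last step: verifying carefully that $\ibar$ is a monomorphism and not merely injective on closed points — i.e.\ ruling out infinitesimal deformations in the fibre. This is handled by running the uniqueness argument of \Cref{thm: fibres of ibar} in families: given two quasimaps $\qone,\qtwo$ to $X$ over a base $T$ with $\ibar\qone\cong\ibar\qtwo$, \Cref{lem: commutativity diagram embedding and extension} forces $r_X\qone\cong r_X\qtwo$ (since $\imap$ is a closed embedding hence a monomorphism), \Cref{lem: basepoints of ibar q} forces the basepoint loci to agree, and \Cref{lem: pushforward degree of a basepoint} together with injectivity of $\iota_\ast$ forces the degrees at basepoints to agree fibrewise; then \Cref{lem: equality of quasimaps} (applied in the family) gives $\qone\cong\qtwo$, so $\ibar$ is a monomorphism, completing the proof.
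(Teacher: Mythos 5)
Your reduction to showing that $\ibar$ is a proper monomorphism, and your use of \Cref{thm: fibres of ibar} together with injectivity of $\iota_\ast$ on $A_1$ to conclude that each geometric fibre of $\ibar$ has at most one point, are exactly the paper's argument. The genuine gap is in your final step, the passage from injectivity on geometric points to the monomorphism property. You correctly identify that the danger is infinitesimal, but the argument you give does not address it: the regular extension $\qreg$, the basepoint set $B_\q$ and the degrees $\betapq$ are defined in the paper only for quasimaps over a geometric point, and \Cref{lem: commutativity diagram embedding and extension}, \Cref{lem: basepoints of ibar q}, \Cref{lem: pushforward degree of a basepoint} and \Cref{lem: equality of quasimaps} are all statements about such quasimaps. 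There is no ``regular extension in families'' (basepoints appear and disappear in families, so the required twist is not flat over the base), and hence ``\Cref{lem: equality of quasimaps} applied in the family'' is not available. What your argument actually yields is that two $T$-families $\qone,\qtwo$ with $\ibar\qone\cong\ibar\qtwo$ agree over the geometric points of $T$; over a non-reduced base, e.g.\ $T=\Spec \bbC[\epsilon]/(\epsilon^2)$, this says nothing beyond the closed fibre, which is precisely the infinitesimal phenomenon you set out to exclude. Your alternative plan of proving $\ibar$ unramified is announced but never carried out.

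The paper closes this gap with a general criterion instead of a families computation: $\ibar$ is locally of finite type \cite[Lemma 01T8]{stacks-project}, and a morphism locally of finite type is a monomorphism as soon as each of its fibres over a point is empty or maps isomorphically to that point \cite[Lemma 05VH]{stacks-project}; the fibre statement is exactly what \Cref{thm: fibres of ibar} plus injectivity of $\iota_\ast$ supply. With that substitution the rest of your proposal coincides with the paper's proof: $\ibar$ is proper because $\Qgn(X,\beta)$ is proper and $\Qgn(Y,\iota_\ast\beta)$ is separated, and a proper monomorphism is a closed immersion \cite[Lemma 04XV]{stacks-project}. So the fix is not to push the basepoint bookkeeping into families, but to invoke this fibrewise criterion (equivalently, to actually prove unramifiedness, which that criterion packages for you).
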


\begin{proof}
    The morphism $\ibar$ is proper over $\Spec(\bbC)$ by \cite[Lemma 01W6]{stacks-project} because $\ccQ (X,\beta)$ is proper and $\ccQ (Y,\iota_\ast \beta)$ is separated, both over $\Spec(\bbC)$. By \cite[Lemma 04XV]{stacks-project}, it is enough to show that $\ibar$ is also a monomorphism.
    
    Since $\Qgn(X,\beta)$ is locally of finite type over $\Spec(\bbC)$, so is the morphism $\ibar$ by \cite[Lemma 01T8]{stacks-project}. Furthermore, the assumption that $\iota_\ast$ is injective on curve classes ensures, by \Cref{thm: fibres of ibar}, that for every (geometric) point $s$ in $\Qgn(Y,\iota_\ast \beta)$, the natural morphism $\Qgn(X,\beta)_s \to s$ induced by $\ibar$ is an isomorphism. This ensures that $\ibar$ is a monomorphism by \cite[Lemma 05VH]{stacks-project}.
\end{proof}

\section{The contraction morphism for smooth projective toric varieties}\label{sec: contraction}

We fix a smooth projective toric variety $X$ for the rest of \Cref{sec: contraction}.

\subsection{Construction of the contraction morphism}\label{subsec: construction of contraction}

We construct a morphism of stacks
\[
    c = c_{X}\colon \cssgn \rightarrow \Qgn(X,\beta),
\]
with $\cssgn$ a closed substack of $\Mgn(X,\beta)$. The morphism $c_X$, called the \textit{contraction morphism} of $X$, will be a generalization of the contraction morphism $c_{\bbP^N}$ described in \Cref{subsec: contraction for Pn} and it extends the identity on the locus of stable maps which are stable as quasimaps. The construction of $c_X$ relies on results from \Cref{sec: embeddings of toric varieties} and $c_{\bbP^N}$.\\

\begin{remark}
    In \Cref{subsec: contraction for Pn} we have reviewed the morphism $c_{\bbP^N}$. One can check that a similar construction defines a (global) contraction morphism for products of projective spaces.
\end{remark}

\begin{construction}\label{constr: c_X}
    Let $\beta\in A_1(X)$ be an effective curve class. Choose an epic closed embedding
    \[
		\iota \colon X \hookrightarrow \bbP\coloneqq \bbP^{n_1} \times \ldots \times \bbP^{n_k}.
    \]
    Such an embedding always exists by \Cref{cor: embedding in P with iso on A1}. From \Cref{cor: injective on A1 implies closed embedding} it follows that
    \[
        \ibar \colon \Qgn(X,\beta)\hookrightarrow \Qgn(\bbP,\iota_\ast \beta)
    \]
    is a closed embedding. Let 
    \[
        c_\bbP\colon \Mgn(\bbP,\iota_\ast\beta) \to \Qgn(\bbP,\iota_\ast\beta)
    \]
    denote the contraction morphism for $\bbP$.
    Then $c_\bbP$ fits in the following diagram.
    \begin{equation}\label{eq: diagram that defines c_X}
    \begin{tikzcd}
        \Mgn(X,\beta) \arrow[hook]{r}{\imap}&
        \Mgn(\bbP,\iota_\ast\beta) \arrow{d}{c_\bbP}\\
        \Qgn(X,\beta) \arrow[hook]{r}{\ibar} &
        \Qgn(\bbP,\iota_\ast\beta)
    \end{tikzcd}
    \end{equation}
    We define the stack $\cssgn$ by the following Cartesian diagram
    \[
        \begin{tikzcd}
            \cssgn \arrow{r}{j} \arrow{d}{c_X}    & \Mgn(X,\beta) \arrow{d}{c_\bbP \circ \imap} \\
            \Qgn(X,\beta)\arrow{r}{\ibar}   & \Qgn(\bbP,\iota_\ast \beta)    
        \end{tikzcd}
    \]
    and we call the induced morphism 
    \[
        c_X\colon \cssgn \to \Qgn(X,\beta)
    \]
    the \textit{contraction morphism of $X$}. 
\end{construction}

Note that $j\colon \cssgn\to \Mgn(X,\beta)$ is a closed embedding and $c_X\colon \cssgn \to \Qgn(X,\beta)$ is locally of finite type.

The definition of $\cssgn$ and of $c_X$ a priori depends on the chosen epic closed embedding $\iota$.  Surprisingly, the description of the closed points of $\cssgn$ in \Cref{prop: description css and c_X} is independent of $\iota$. Equivalently, for a morphism $f$ to $X$, the property that $(c_\bbP \circ \imap) (f)$ factors through $\ibar$ is independent of the chosen epic embedding $\iota$ in a product of projective spaces. In future work, we plan to study the image of $\ibar$ in order to understand this property better. We expect that a similar result for families would have applications in enumerative geometry.

\subsection{Description of contraction on points}\label{subsec: description_contraction}

We describe the points of the closed substack $\cssgn$ inside $\Mgn(X,\beta)$ and the morphism $c_X$ from \Cref{constr: c_X} in terms of line bundle-section pairs. 
We use the notations introduced in \Cref{subsec: functoriality quasimaps,subsec: construction of contraction}.\\

\begin{proposition}\label{prop: description css and c_X}
    Let $(C,L_\rho, s_\rho, c_m)$ be the $\Sigma$-collection corresponding to an $n$-marked genus-$g$ stable map $f\colon C\to X$. Let $T_1,\ldots, T_\ell$ be the rational tails of $f$, let $\tilde{C}$ be the curve obtained by contracting all the rational tails in $C$, viewed as a subcurve of $C$. For $1\leq i \leq \ell$, let $\p_i = T_i\cap \overline{(C\setminus T_i)}$ and let $\coord_i$ denote a local coordinate at $\p_i$ inside $\tilde{C}$.
    Then
    \begin{enumerate}[(i)]
        \item \label{item: condition defining css} The map $(C,L_\rho, s_\rho, c_{m_X})$ belongs to $\cssgn$ if and only if
        \[
            \ord_{\p_i}(s_\rho\mid_{\tilde{C}}) + \beta_{T_i} \cdot D_\rho \geq 0
        \]
        for every $\rho\in\Sigma_X(1)$ and every $1\leq i\leq \ell$.
        
        \item\label{item: description of c_X} If $(C,L_\rho, s_\rho, c_{m_X})$ lies in $\cssgn$ then its image under $c_X$ is the quasimap $(\tilde{C},\tilde{L}_\rho, \tilde{s}_\rho, \tilde{c}_{m_X})$ with
        \begin{align*}
            \tilde{L}_\rho &= L_\rho\mid_{\tilde{C}} \otimes \ccO_{\tilde{C}}\left(\sum_{i=1}^\ell (\beta_{T_i}\cdot D_\rho) \p_i\right), \\ 
            \tilde{s}_\rho &= s_\rho\mid_{\tilde{C}} \otimes \prod_{i=1}^\ell \coord_i^{\beta_{T_i}\cdot D_\rho},\\
            \tilde{c}_{m_X} &= c_{m_X}\mid_{\tilde{C}}\otimes (\otimes_{i=1}^\ell \psi_{m_Y, x_i, \beta_{T_i}}),
    \end{align*}
    where 
    the isomorphisms $\psi_{m_X,x_i,\beta_{T_i}}$ are constructed 
    in \Cref{rmk: compatible isos}.
\end{enumerate}
\end{proposition}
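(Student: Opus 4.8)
The plan is to unwind \Cref{constr: c_X} and reduce everything to \Cref{thm: fibres of ibar}. By the Cartesian square defining $\cssgn$ (and since $\ibar$ is a closed embedding), a stable map $f\colon C\to X$ gives a point of $\cssgn$ if and only if the stable quasimap $\q \coloneqq c_\bbP(\imap(f)) = c_\bbP(\iota\circ f)$ to $\bbP$ lies in the image of $\ibar$, and in that case $c_X(f)$ is the unique preimage $\ibar^{-1}(\q)$. So the first step is to describe $\q$ explicitly. As $\iota$ is a closed embedding, $\iota\circ f$ is a stable map with the same underlying curve $C$, and since being a rational tail is a purely curve-theoretic condition, $\iota\circ f$ has the same rational tails $T_1,\dots,T_\ell$ as $f$, now with $\beta_{T_i,\iota\circ f} = \iota_\ast\beta_{T_i}$. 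Using the description of the contraction morphism for a product of projective spaces (contract each $T_i$, and twist the line bundle–section pair attached to a ray $\rho$ of $\bbP$ by $\prod_i \coord_i^{\,\iota_\ast\beta_{T_i}\cdot D_\rho}$), together with \Cref{ex: basepoints_P}, I would check that $\q$ has underlying curve $\tilde C$, that its regular extension is $\qreg = \iota\circ(f\mid_{\tilde C})$, and that its basepoints are exactly $\p_1,\dots,\p_\ell$ with $\beta_{\p_i,\q} = \iota_\ast\beta_{T_i}$ — the point being that $\iota\circ f$ is an honest morphism on $\tilde C$, so the orders of vanishing of its sections at $\p_i$ are $0$ in each projective factor, and the twist contributes exactly $\iota_\ast\beta_{T_i}\cdot D_\rho$ to each section.

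Next, apply \Cref{thm: fibres of ibar} to $\q$ and the class $\iota_\ast\beta$ (assuming $\beta\neq 0$; if $\beta = 0$ there are no rational tails and both assertions are trivial). Condition \Cref{item: regular extension factors} holds automatically, with the morphism there equal to $f\mid_{\tilde C}\colon\tilde C\to X$ and sections $t_\rho = s_\rho\mid_{\tilde C}$, because $\qreg = \iota\circ(f\mid_{\tilde C})$ and $\imap$ is a closed embedding. In \Cref{item: existence curve class + conditions}, the requirement $\iota_\ast\beta^{\p_i} = \beta_{\p_i,\q} = \iota_\ast\beta_{T_i}$ forces $\beta^{\p_i} = \beta_{T_i}$ by injectivity of $\iota_\ast$; each $\beta_{T_i}$ is effective and non-zero, since a stable map has no contracted rational tails; and \Cref{item: correct sum} is just the decomposition $\beta = \beta_{f\mid_{\tilde C}} + \sum_i\beta_{T_i}$ of the degree of $f$ over its components, which always holds. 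Hence the only non-vacuous condition is \Cref{item: order condition}, namely $\ord_{\p_i}(s_\rho\mid_{\tilde C}) + \beta_{T_i}\cdot D_\rho \geq 0$ for all $\rho$ and all $i$ — which is precisely the inequality of \Cref{item: condition defining css}. This proves \Cref{item: condition defining css}, and the bijective part of \Cref{thm: fibres of ibar} shows in addition that $\ibar^{-1}(\q)$ is a single (stable) point.

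For \Cref{item: description of c_X}, that single point $c_X(f) = \ibar^{-1}(\q)$ is precisely the quasimap $\qone$ constructed in the proof of \Cref{thm: fibres of ibar}: one fixes $\qonereg = f\mid_{\tilde C}$, whose $\Sigma_X$-collection is the restriction $(L_\rho\mid_{\tilde C},\, s_\rho\mid_{\tilde C},\, c_{m_X}\mid_{\tilde C})$, and then twists at each $\p_i$ by $\beta^{\p_i} = \beta_{T_i}$ exactly as in \Cref{cor: expression regular extension}, but with the sign of the twist reversed. Reading off this twist yields the asserted formulas for $\tilde L_\rho$, $\tilde s_\rho$ and $\tilde c_{m_X}$, completing the proof.

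The step I expect to be most delicate is the explicit identification of $\q = c_\bbP(\iota\circ f)$ in the first paragraph: the contraction morphism for $\bbP$ is only pinned down via the reference to \cite{popa_roth} and the remark on products, so some care is needed to extract its underlying curve, its regular extension and — above all — the degrees $\iota_\ast\beta_{T_i}$ of its basepoints in the line bundle–section language of \Cref{ex: basepoints_P}. Once that is in place, the rest is a bookkeeping application of \Cref{thm: fibres of ibar} and the injectivity of $\iota_\ast$.
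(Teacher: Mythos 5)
Your proposal is correct and follows essentially the same route as the paper: both parts reduce to \Cref{thm: fibres of ibar} applied to $\q = c_\bbP(\imap(f))$ (whose regular extension is $\iota\circ(f\mid_{\tilde{C}})$ and whose basepoint degrees are $\iota_\ast\beta_{T_i}$), with injectivity of $\iota_\ast$ forcing $\beta^{\p_i}=\beta_{T_i}$, so that only the order condition survives, giving \Cref{item: condition defining css}. The only cosmetic difference is in \Cref{item: description of c_X}: you derive the formula for $c_X(f)$ from the twisting construction inside the proof of \Cref{thm: fibres of ibar}, whereas the paper verifies the same formula by applying $\ibar$ to the candidate quasimap and matching it with $c_\bbP(\imap(f))$ via the projection formula and \Cref{rmk: coefficientes_a_explained} --- equivalent bookkeeping.
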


\begin{proof} 
    The proof of \Cref{item: description of c_X} follows from a careful study of the morphisms 
    in Diagram \eqref{eq: diagram that defines c_X}.
    The morphism $c_{\bbP}$ is a generalization of the morphism $c_{\bbP^N}$, which is described in terms of line bundle-section pairs in \Cref{subsec: contraction for Pn}. Similarly, $\imap$ is described in \Cref{constr: functoriality_qmaps} (see \Cref{rmk: functoriality_same_maps_qmaps}). These descriptions imply \Cref{item: description of c_X}.

    For completeness, we spell out the details. Let us write 
    \begin{align*}
        (c_\bbP \circ \imap) (C,L_\rho,s_\rho,c_{m_X}) &= (\hat{C}, \hat{L}_\tau, \hat{s}_\tau, \hat{c}_{m_Y}),\\
        \ibar (\tilde{C},\tilde{L}_\rho, \tilde{s}_\rho, \tilde{c}_{m_X}) &= (\tilde{C},\overline{L}_\tau, \overline{s}_\tau, \overline{c}_{m_Y}).
    \end{align*}
    It suffices to show the equality
    \[
        (\hat{C}, \hat{L}_\tau, \hat{s}_\tau, \hat{c}_{m_Y}) = (\tilde{C},\overline{L}_\tau, \overline{s}_\tau, \overline{c}_{m_Y}).
    \]
    Furthermore, in that case it follows that $(\tilde{C},\tilde{L}_\rho, \tilde{s}_\rho, \tilde{c}_{m_X})$ is generically non-degenerate, because $(\tilde{C},\overline{L}_\tau, \overline{s}_\tau, \overline{c}_{m_Y})$ is, by \Cref{lem: basepoints of ibar q}.

    The equality $\hat{C} = \tilde{C}$ follows from the description of $c_\bbP$ and the fact that $\iota$ is a closed embedding, so $\imap$ and $\ibar$ preserve the underlying curve.  

    For the line bundles, we have that
    \begin{align*}
        \hat{L}_\tau &= \bigotimes_\rho L_\rho^{a_\rho^\tau}\mid_{\tilde{C}} \otimes \left( \bigotimes_{i=1}^\ell \ccO_{\tilde{C}}(\iota_\ast (\beta_{T_i}) \cdot D_\tau\ \p_i)\right),\\
        \overline{L}_\tau&= \bigotimes_\rho L_\rho^{a_\rho^\tau}\mid_{\tilde{C}} \otimes \left( \bigotimes_{i=1}^\ell \ccO_{\tilde{C}}(\beta_{T_i} \cdot (\sum_{\rho} a_\rho^\tau D_\rho) \ \p_i)\right).
    \end{align*}
    with $a_{\rho}^\tau$ as in \Cref{constr: functoriality_qmaps}. 
    The desired equality follows from the projection formula and \Cref{rmk: coefficientes_a_explained}, since
    \[
        \iota_\ast (\beta_{T_i}) \cdot D_\tau = \beta_{T_i} \cdot \iota^\ast (D_\tau) = \beta_{T_i} \cdot (\sum_{\tau} a_\rho^\tau D_\rho).
    \]

    For the sections, we have that
    \begin{align*}
        \hat{s}_\tau&= \mu_{\underline{a}^\tau} \prod_\rho s_\rho^{a_\rho^\tau}\mid_{\tilde{C}} \prod_{i=1}^\ell \coord_i ^{\iota_\ast(\beta_{T_i})\cdot D_\tau}\\
        \overline{s}_\tau&= \mu_{\underline{a}^\tau} \prod_\rho s_\rho^{a_\rho^\tau}\mid_{\tilde{C}} \prod_{i=1}^\ell \coord_i ^{\beta_{T_i}\cdot (\sum_{\rho} a_\rho^\tau D_\tau)},
    \end{align*}
    and we conclude by the same argument.

    Finally, for the isomorphisms we have that
    \begin{align*}
        \hat{c}_{m_Y}&= c_{m_X} \otimes \left(\bigotimes_i \psi_{m_Y,x_i,\iota_\ast(\beta_{T_i})}\right)\\
        \overline{c}_{m_Y}&= c_{m_X} \otimes \left(\bigotimes_i \psi_{m_X,x_i,\beta_{T_i}}\right)
    \end{align*}
    where $m_X \in M_X$ is uniquely determined by the conditions
    \[
        \scalar{m_X,u_\rho} = \sum_\tau a_\rho^\tau \scalar{m_Y,u_\tau}
    \]
    for every $\rho\in\Sigma(1)$.
    One can check that this conditions implies the equality $\psi_{m_Y,x_i,\iota_\ast(\beta_{T_i})} = \psi_{m_X,x_i,\beta_{T_i}}$ for each $i$.
  
    To conclude the proof, we show \Cref{item: condition defining css}.
    By construction of $\cssgn$, we need to find under what conditions the quasimap $q = (c_\bbP\circ \imap)(f)$ factors through the image of $\ibar$, for which we use \Cref{thm: fibres of ibar}. It suffices to rewrite \Cref{item: regular extension factors,item: existence curve class + conditions} in \Cref{thm: fibres of ibar} for $q$ in terms of $f$ itself.

    \Cref{item: regular extension factors} in \Cref{thm: fibres of ibar} is automatic here. Indeed, $q_\reg = (c_\bbP(\iota\circ f))_{\reg}$ and $\iota \circ (f\mid_{\tilde{C}})$ are maps which agree on a dense open in $\tilde{C}$, therefore they must agree everywhere. This implies that $q_\reg$ factors through $X$.

    Let us conclude with \Cref{item: existence curve class + conditions} in \Cref{thm: fibres of ibar}. By the proof of \Cref{thm: fibres of ibar}, if $\q=\ibar \qone$ for some quasimap $\qone$ to $X$, then $\beta^{\p_i} = \beta_{\p_i,\qone}$. Combining this with \Cref{item: pushforward agrees} and the equality $\beta_{\p_i,\q} = \iota_\ast(\beta_{T_i,f})$, it follows $\iota_\ast(\beta^{\p_i}) = \iota_\ast(\beta_{T_i,f})$, and therefore $\beta^{\p_i} = \beta_{T_i,f}$ since $\iota_\ast$ is injective. This is the only possible choice for the class $\beta^{\p_i}$.
    Therefore, $f$ lies in $\cssgn$ if and only if \Cref{item: correct sum,item: order condition} of \Cref{thm: fibres of ibar} hold for this choice of $\beta^{\p_i}$. \Cref{item: correct sum} says
    \[
        \beta_f = \beta_{f\mid_{\tilde{C}}} + \sum_{i=1}^\ell \beta_{T_i,f},
    \]
    which is immediate. Thus, the only non-trivial condition for $f$ to lie in $\cssgn$ is \Cref{item: order condition}, which says that
    \[
        \ord_{\p_i}(s_\rho\mid_{\tilde{C}}) + \beta_{T_i,f}\cdot D_\rho \geq 0
    \]
    for every $\rho\in\Sigma(1)$ and every $1\leq i \leq \ell$. This is exactly the condition in \Cref{item: condition defining css}.
\end{proof}

Note that, if $X$ has the property that all the toric boundary divisors $D_\rho$ are nef, then $c_X$ is defined on the whole of $\Mgn(X,\beta)$ since the condition in \Cref{item: condition defining css} in \Cref{prop: description css and c_X} holds trivially.

\begin{example}\label{ex:family_special_fibre_V}
    In order to illustrate the content of \Cref{prop: description css and c_X}, we construct a family of stable maps whose special fibre lies in $\cssgn$ but whose general fibre lies outside $\cssgn$.
    
    Let $X=\Bl_0 \bbP^2$ be the blow-up of $\bbP^2$ at the origin, whose fan is pictured in \Cref{fig: fan blowup2}. We will use the curve classes $L,S, E\in A_1(\Bl_0 \bbP^2)$ introduced in \Cref{ex: embeddings}. By \cite{Cox_functor}, a morphism to $\Bl_0 \bbP^2$ is given by line bundle-section pairs $(L_i,s_i)$ for $i=0,\ldots, 3$ satisfying the following conditions
    \begin{enumerate}
        \item $L_0\simeq L_1\otimes L_3$ and $L_1\simeq L_2$,
        \item $s_0$ and $s_3$ do not vanish simultaneously and
        \item $s_1$ and $s_2$ do not vanish simultaneously.
    \end{enumerate}
    Such a map has degree $\beta$ if $\beta\cdot D_i = \deg(L_i)$. For example, $(L\cdot D_i) = (1,1,1,0)$, $(S\cdot D_i) = (1,0,0,1)$ and $(E\cdot D_i) = (0,1,1,-1)$.

    Let $C_1 = \bbP^1$ with homogeneous coordinates $[x_0\colon x_1]$ and let $C_2 = \bbP^1$ with homogeneous coordinates $[y_0\colon y_1]$. We follow the convention that $0=[1\colon 0]$. Let $C$ be the rational nodal curve obtained by gluing of $C_1$ and $C_2$ along the origins. By abuse of notation, we denote the node of $C$ by 0.
    
    Consider the affine line $\bbA^1$ with coordinate $t$ and let $W=C\times \bbA^1$, which we view as a trivial family of nodal curves over $\bbA^1$. We denote by $W_t$ and $0_t$ the fibre and the node over $t\in\bbA^1$, respectively.  Furthermore, we choose two disjoint sections of the projection $W\to \bbA^1$ with the condition that they factor through $(C_1\setminus 0)\times \bbA^1$. These will be our marked points.

    Consider the following morphism $f\colon W \to \Bl_0 \bbP^2$, whose components we denote by $s_0,\ldots, s_3$;
    \begin{enumerate}
        \item $f$ restricted to $C_1\times  \bbA^1$ is given by
        \[
            f([x_0\colon x_1],t) = [x_0^2\colon 0\colon 2\colon x_1^2 - tx_1x_0]
        \]
        \item and $f$ restricted to $C_2\times \bbA^1$ is given by
        \[
            f([y_0\colon y_1],t) = [y_0^2\colon y_1y_0\colon y_1^2-3y_1y_0+2y_0^2\colon 0]
        \]
    \end{enumerate}
    One can check that the two descriptions glue along $0\times \bbA^1$ and satisfy non-degeneracy, so $f$ is a family in $\overline{\ccM}_{0,2}(\Bl_0 \bbP^2,2L)$ which restricts to a family of degree $2S$ on $C_1\times \bbA^1$ and of degree $2E$ on $C_2\times \bbA^1$.

    For each $t$, \Cref{prop: description css and c_X} says that the morphism $f_t\colon W_t \to \Bl_0 \bbP^2$ lies in the closed substack $\cssgn$ of $\overline{\ccM}_{0,2}(\Bl_0 \bbP^2,2L)$ where $c_{\Bl_0\bbP^2}$ is defined if and only if
    \[
        \ord_{0_t}(s_3\mid_{C_1\times \{t\}}) \geq -\beta_{C_1\times \{t\}}\cdot D_3 = -2\ E\cdot E = 2.
    \]
    Since $s_3 = x_1^2-tx_1x_0$, the condition is satisfied if and only if $t=0$. Therefore, $f$ is indeed a family of stable maps whose general fibre does not lie in $\cssgn$ but whose special fibre lies in $\cssgn$.

    As a sanity check for \Cref{prop: description css and c_X}, we show that $f_0$ is indeed the only fibre that lies in $\cssgn$ using \Cref{eq: diagram that defines c_X} directly. For that, we choose the closed embedding $\iota\colon \Bl_0 \bbP^2\hookrightarrow\bbP^2\times \bbP^1$ described in \Cref{ex: embeddings} and show that $c_{\bbP} \circ \imap (f_t)$ lies in the image of $\ccQ_{0,2}(\Bl_0 \bbP^2,2L)$ if and only if $t=0$.

    Following the definition of $\iota$ in \Cref{eq: embedding_blowup_P2xP1}, we have the following description of $\imap(f)$:
    \begin{enumerate}
        \item on $C_1\times  \bbA^1$
        \[
            \imap(f)([x_0\colon x_1],t) = [x_0^2\colon 0\colon 2 (x_1^2 - tx_1x_0)],[0\colon 2]
        \]
        \item and, on $C_2\times \bbA^1$, 
        \[
            \imap(f)([y_0\colon y_1],t) = [y_0^2\colon 0\colon 0],[y_1y_0\colon y_1^2-3y_1y_0+2y_0^2]
        \]
    \end{enumerate}

    Next, we apply $c_\bbP$, where $\bbP = \bbP^2\times\bbP^1$. This means we contract $C_2\times \bbA^1$ and twist by $\iota_\ast(2E) = (0,2)$. The result is the morphism 
    \[
        g = c_{\bbP} \circ \imap (f) \colon C_1\times \bbA^1\to \Bl_0 \bbP^2\
    \]
    given by
    \[
        g([x_0\colon x_1],t) = [x_0^2\colon 0\colon 2 (x_1^2 - tx_1x_0)],[0\colon 2x_1^2].
    \]

    Observe that if we choose coordinates $[z_0\colon z_1\colon z_2]$ on $\bbP^2$ and $[w_0\colon w_1]$ on $\bbP^1$, then the image of $\iota$ (see \Cref{eq: embedding_blowup_P2xP1}) is the closed with equation $z_1w_1-z_2w_0$. The same equations determine if a stable map to $\bbP^2\times \bbP^1$ factors through $\Bl_0 \bbP^2$, but the situation is different for quasimaps. In fact, $g$ satisfies this equation, but we argue next that $g_t$ lies in the image of $\ccQ_{0,2}(\Bl_0 \bbP^2,2L)$ if and only if $t=0$.

    The image of a family of quasimaps $(C,L_i,s_i,c_m)$ along 
    \[
        \ibar\colon \ccQ_{0,2}(\Bl_0 \bbP^2,2L)\to \ccQ_{0,2}(\bbP^2\times \bbP^1,(2,2))
    \]
    can be expressed in homogeneous coordinates as
    \[
        [s_0\colon s_1s_3\colon s_2s_3],[s_1\colon s_2].
    \]
    In particular, for $g_t$ to lie in the image of $\ibar$ we must have that the rational function
    \[
        \frac{2 (x_1^2 - tx_1x_0)}{2x_1^2}
    \]
    is regular (and, in fact, constant) on $C_1\times \{t\}$. This happens if and only if $t=0$.
\end{example}

The degree of a basepoint has the following nice geometric interpretation for maps of the form $c_X(f)$ with $f\in\css$.

\begin{remark}\label{rmk: geometric_interpretation_degree}
    The following observation can be deduced from \Cref{prop: description css and c_X}, or directly from \Cref{constr: c_X}. For every stable map $f\colon C\to X$ and every rational tail $T\subset C$ of $f$, the point $\p = f(T)$ is a basepoint of the quasimap $c_X(f)$ of degree $\beta_{\p,c_X(f)} = \deg(f\mid_T)$. Since the notion of degree of a basepoint is instrinsic, the same must hold for any other stable map $f'$ with $c_X(f') = c_X(f)$. In other words, we can reinterpret the degree $\betap$ of a basepoint $\p$ of a quasimap $\q$ as the degree of any rational tail $T$ of a stable map $f\colon C \to X$ such that $c_X(f) = \q$ and $c_X$ contracts $T$ to $\p$, whenever one such map exists. For example, the existence of such $f$ is guaranteed if $X$ is Fano, since then $c_X$ is surjective by \Cref{thm: surjectivity Fano}.
\end{remark}

\begin{example}
    We highlight the relation, explained in \Cref{rmk: geometric_interpretation_degree}, between the degree of a basepoint and the degree of rational tails contracted to it. 
    
    Let $C=\bbP^1$ with homogeneous coordinates $[s_0\colon s_1]$, consider $\bbA^1$ with coordinate $t$ and let $W=C\times \bbA^1$. Consider the following morphism $f\colon W\to \bbP^2$
    \[
        f([s_0\colon s_1],t) = [s_0\colon ts_0\colon s_1].
    \]
    Then $f$ induces a morphism $\widetilde{f}\colon W\setminus W_0 \to \Bl_0\bbP^2$, which extends to a family of quasimaps $\q$ on $W$,
    \[
        \q([s_0\colon s_1],t) = [s_0\colon ts_0\colon s_1\colon 1]),
    \]
    with a basepoint at the point $t=s_1=0$, the origin in $W_0$. 

    On the other hand, $\widetilde{f}$ can also be extended to a morphism
    \[
        \widetilde{f} \colon \widetilde{W} = \Bl_{s=t=0} W \to \Bl_0\bbP^2,
    \]
    which we view as a family of maps over $\bbA^1$. If we view $\widetilde{W}$ as the closed subvariety cut-out by $u_1t-s_1u_0$ inside $C\times \bbA^1 \times \bbP^1_{[u_0\colon u_1]}$, then the expression of $\widetilde{f}$ is
    \[
        \widetilde{f}([s_0\colon s_1],t,[u_0\colon u_1]) = [u_1s_0\colon u_0s_0 \colon u_1\colon u_1s_1]. 
    \]

    The special fibre $\widetilde{W}_0$ has two components $T = V(s_1)$ and $\widetilde{C} = V(u_0)$. The restriction of $\widetilde{f}$ to each of them is
    \[
        \widetilde{f}\mid_{T}( [u_0\colon u_1]) = [u_1\colon u_0\colon u_1\colon 0],
    \]
    of degree $E$, and
    \[
        \widetilde{f}\mid_{\widetilde{C}}( [s_0\colon s_1]) = [s_0\colon 0\colon 1\colon s_1],
    \]
    of degree $S$.

    \Cref{rmk: geometric_interpretation_degree} tells us that we can read that $\deg(\widetilde{f}\mid_{T}) = E$ directly from $\q$, by checking that $\betap = \beta_{\p,\q_0} = E$, where $\p$ is the point $s=t=0$ in $W$. We check the claim in this particular example. 

    For that, we need to compute $\betap$ following \Cref{proposition: existence choice of sigma general case}. Firstly, 
    \[
        \q_0([s_0\colon s_1]) = [s_0\colon 0\colon s_1\colon 1])
    \]
    has the following orders of vanishing at $\p$: $(\ord_\p(s_i)) = (0, \infty, 1, 0)$, and $\vanishp = \{\rho_1\}$. Let $\sigma_{i,j}$ denote the cone in \Cref{fig: fan blowup2} spanned by $\rho_i$ and $\rho_j$. The only cones $\sigma\in\Sigma(2)$ such that $\vanishp\subseteq \sigma$ are $\sigma_{0,1}$ and $\sigma_{1,3}$. The former cone gives
    \begin{align*}
        \beta(\p,\sigma_{0,1})\cdot D_2 &= 1,\\
        \beta(\p,\sigma_{0,1})\cdot D_3 &= 0.
    \end{align*}
    Therefore $\beta(\p,\sigma_{0,1}) = L$, but it does not satisfy the condition in \Cref{proposition: existence choice of sigma general case} because
    \[
        0 = \ord_\p(s_0) \not \geq \beta(\p,\sigma_{0,1})\cdot D_0 = L\cdot L = 1.
    \]
    
    Instead, we must take $\sigma_{1,3}$, which gives
    \begin{align*}
        \beta(\p,\sigma_{1,3})\cdot D_0 &= 0,\\
        \beta(\p,\sigma_{1,3})\cdot D_2 &= 1.
    \end{align*}
    Therefore $\beta(\p,\sigma_{1,3}) = E$, and one easily checks that
    \begin{align*}
        \infty = \ord_\p(s_1) &\geq \beta(\p,\sigma_{1,3})\cdot D_1 = E\cdot S = 1,\\
        0 = \ord_\p(s_3) &\geq \beta(\p,\sigma_{1,3})\cdot D_3 = E\cdot E = -1.
    \end{align*}
    So $\betap = \beta(\p,\sigma_{0,1}) = E$, as claimed.
\end{example}

\section{Surjectivity for Fano targets}\label{sec: surjectivity Fano}

We have introduced the contraction morphism $c_X$ between stable maps and stable quasimaps for smooth projective toric varieties in \Cref{constr: c_X}. In this section, we prove that $c_X$ is surjective if the target $X$ is Fano.

\begin{theorem}\label{thm: surjectivity Fano}
    Let $X$ be a smooth Fano toric variety. The contraction morphism
    \[
        c_X\colon \cssgn\to \ccQ_{g,n}(X,\beta)
    \]
    defined in \Cref{constr: c_X} is surjective. 
\end{theorem}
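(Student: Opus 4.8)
The plan is to check surjectivity on $\bbC$-points, which suffices because $c_X\colon\cssgn\to\Qgn(X,\beta)$ is a finite-type morphism of finite-type algebraic stacks over $\bbC$, so its image is constructible and, once it contains every closed point, is everything. Fix a stable toric quasimap $\q$ to $X$ of class $\beta$, with underlying curve $C$, basepoints $B=B_\q$, regular extension $\qreg\colon C\to X$, and degrees $\betap=\betapq\in A_1(X)$ for $\p\in B$; recall that each $\betap$ is effective and nonzero and $\beta=\beta_{\qreg}+\sum_{\p\in B}\betap$ by \Cref{prop: properties degree of a basepoint}. Note also that, since $\q$ is \emph{stable} as a quasimap, no subtree of $C$ is a rational tail, so $\qreg$ itself has no rational tails.

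The construction is to undo the contraction basepoint by basepoint. For each $\p\in B$ I want a genus-zero stable map $g_\p\colon(T_\p,\star_\p)\to X$ of class $\betap$ with $g_\p(\star_\p)=\qreg(\p)$, and then I glue $\star_\p$ to the point $\p\in C$ (legitimate, as basepoints are nonsingular and unmarked). This produces a prestable map $f\colon C'=C\cup\bigcup_{\p\in B}T_\p\to X$ with $f|_C=\qreg$ and $f|_{T_\p}=g_\p$; it has class $\beta_{\qreg}+\sum_{\p}\betap=\beta$, and its rational tails are exactly the $T_\p$, so contracting them recovers $C$. Then $f\in\cssgn$ and $c_X(f)=\q$: by \Cref{item: condition defining css} in \Cref{prop: description css and c_X} membership of $f$ in $\cssgn$ amounts to $\ord_\p(s'_\rho|_C)+\betap\cdot D_\rho\ge 0$ for all $\rho$ and $\p$, where $s'_\rho$ are the sections of $\qreg$, and by \Cref{cor: expression regular extension} this quantity equals $\ord_\p(s_\rho|_{\compp})\ge 0$, so it holds automatically; and by \Cref{item: description of c_X} in \Cref{prop: description css and c_X} the image $c_X(f)$ is obtained from $\qreg$ by twisting by $\betap$ at each $\p$, which by \Cref{cor: expression regular extension} and \Cref{lem: equality of quasimaps} is exactly $\q$. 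It remains to check that $f$ is a genuine stable map: the tails $T_\p$ carry non-constant (hence stable) maps, and a short case analysis on whether $\beta_{C',\qreg}$ or $\beta_{C',\q}$ vanishes shows that every component $C'$ of $C$, which in $f$ has all its special points from $\q$ plus one new node per basepoint lying on it, satisfies the map-stability inequality precisely because $\q$ is a stable quasimap.

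Thus the theorem reduces to the following statement, which is where the Fano hypothesis enters and which I expect to be the main obstacle: \emph{for every basepoint $\p$ there is a genus-zero stable map to $X$ of class $\betap$ whose marked point maps to $\qreg(\p)$.} The key structural input is that $\qreg(\p)$ is highly constrained: by \Cref{constr: degree of basepoint no vanishing} and \Cref{prop: uniqueness of degree of a basepoint}, $\betap=\beta(\p,\sigma)$ for a maximal cone $\sigma$ with $\vanishp\subseteq\sigma(1)$ and $\ord_\p(s_\rho|_{\compp})=\betap\cdot D_\rho$ for $\rho\notin\sigma(1)$; twisting down by $\betap$ shows the sections of $\qreg$ indexed by $\rho\notin\sigma(1)$ do not vanish at $\p$, so $\qreg(\p)$ lies in the affine chart $U_\sigma$, and in fact on the torus-orbit closure cut out by those $D_\rho$ with $\rho\in\sigma(1)$ along which $\ord_\p(s_\rho|_{\compp})$ strictly exceeds $\betap\cdot D_\rho$. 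The plan is then to use that $X$ is Fano to write $\betap$ inside the Mori cone $\mori(X)$ as a nonnegative integral combination $\sum_j m_j[C_{\tau_j}]$ of classes of torus-invariant rational curves, and to chain suitable representatives (or torus-translates) of the $C_{\tau_j}$ into a connected genus-zero curve through $\qreg(\p)$; the combinatorial position of $\qreg(\p)$ found above is exactly what makes such a chain available. The Fano hypothesis is convenient rather than essential here: what is really used is that every class occurring as some $\betap$ is reachable by a rational curve through the forced point $\qreg(\p)$, which is the content of the weaker hypothesis in \Cref{rmk: relax Fano in surjectivity}.
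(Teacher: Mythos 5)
Your reduction is sound and the bookkeeping is correct: attaching at each basepoint $\p$ a genus-zero stable map of class $\betap$ through $\qreg(\p)$, gluing, and then checking via \Cref{prop: description css and c_X} and \Cref{cor: expression regular extension} that the resulting map lies in $\cssgn$ and contracts to $\q$ all works (including the verification that the new tails are the only rational tails and that map-stability is automatic). But this reduction simply relocates the entire difficulty into your italicized ``key lemma'', which you do not prove. That lemma is essentially the genus-zero, single-basepoint instance of the theorem itself (compare \Cref{rmk: geometric_interpretation_degree}, where the paper deduces exactly this existence statement \emph{from} surjectivity), so leaving it at the level of a plan leaves the mathematical content of \Cref{thm: surjectivity Fano} unaddressed. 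Moreover, the plan you sketch for it does not work as stated: torus-invariant curves $V(\tau)$ are fixed by the torus, so ``torus-translates'' of the $C_{\tau_j}$ are the curves themselves, and no chain of such curves can pass through a point of the dense orbit. Such points do occur as $\qreg(\p)$: your own analysis gives $\qreg(\p)\in D_\rho$ only for rays with $\ord_\p(s_\rho|_{\compp})$ strictly larger than $\betap\cdot D_\rho$ (or $s_\rho$ identically zero), and the set $S$ of such rays can be empty --- e.g.\ row~8 of \Cref{tab: list betaps}, or a quasimap to $\bbP^2$ whose three sections vanish to the same finite order at $\p$. You would also need to justify that $\betap$ is a \emph{nonnegative integral} combination of invariant curve classes, which is not automatic. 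So the constraint on $\qreg(\p)$ is genuinely doing work, and turning it into the required rational curve (or tree) of class exactly $\betap$ through that point demands an argument of the same depth as the theorem.

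For comparison, the paper does not attempt to produce honest stable maps at the basepoints in one step. It grafts at each basepoint a rational curve carrying a \emph{quasimap} of class $\betap$ (\Cref{constr: main construction for surjectivity}), which may create new basepoints on the grafted curve, and runs an induction on the anticanonical length $\lambda(\beta)$ of \Cref{def:length}. The only delicate case is a single basepoint with $\betap=\beta$, where an infinite loop must be excluded; this is done by choosing the sections on the grafted curve so that some new basepoint degree strictly drops, and it is precisely here that the Fano hypothesis enters, via the inequality \eqref{eq: use Fano in surjectivity} and \cite[Proposition 6.4.4]{CLS}, together with \Cref{lemma: key lemma,lem: exists nonvanishing section}. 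If you want to salvage your cleaner one-step strategy, you would need an independent proof that every $(\betap,\qreg(\p))$ arising from a basepoint is realized by a genus-zero stable map through that point --- which, absent a new idea about rational curves on toric Fano varieties in prescribed classes through prescribed strata, is the theorem you set out to prove.
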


The proof of \Cref{thm: surjectivity Fano} is delayed until \Cref{subsec: proof surjectivity}. Before that, we introduce some terminology about the monoid of effective curve classes in \Cref{subsec: factorizations} and an important construction in \Cref{subsec: grafting}.

We fix a smooth toric Fano variety $X$ for the rest of \Cref{sec: surjectivity Fano}. \\

\subsection{Factorizations of curve classes}\label{subsec: factorizations}

\begin{definition}
    Let $\beta$ be a non-zero effective curve class in $X$. A \textit{factorization} of $\beta$ is an expression $\beta = \beta_1 + \ldots + \beta_k$ such that $k\geq 2$ and $\beta_i$ is non-zero and effective for every $i$. We say that $\beta$ is \textit{irreducible} if it does not admit any factorization.
\end{definition}

\begin{definition}\label{def:length}
    The \textit{length} of a curve class $\beta\in A_1(X)$ is 
    \[
        \lambda(\beta) = \deg(\iota_\ast \beta) = \beta \cdot (-K_X) =  \sum_{\rho \in \Sigma(1)} \beta \cdot D_\rho,
    \]
    with $\iota\colon X\hookrightarrow \bbP^N$ the anticanonical embedding. 
\end{definition}

\begin{remark}\label{rmk: length_properties}
The length $\lambda$ defines a morphism of semigroups with unit $\lambda\colon A_1(X) \to \bbZ$. Moreover, 
    \begin{enumerate}
        \item if $\beta$ is effective then $\lambda(\beta) \geq 0$ with equality if and only if $\beta = 0$ and
        \item if $\beta$ is non-zero and effective and if $\beta = \beta_1 + \ldots + \beta_k$ is a factorization, then $\lambda(\beta) > \lambda(\beta_i)$ for all $i$.
    \end{enumerate}
\end{remark}

\subsection{Grafting trees on to quasimaps}\label{subsec: grafting}

We explain how to replace a basepoint of a quasimap with a rational curve and how to extend the quasimap there given certain extra data. This construction is fundamental for the proof of \Cref{thm: surjectivity Fano}.

\begin{construction}\label{constr: main construction for surjectivity}
    Let $\q = (C,L_\rho, s_\rho,c_m)$ be a quasimap to $X$ of degree $\beta$. Suppose for simplicity that $\q$ has a unique basepoint $\p$ and let $\betap$ be the degree of $\q$ at $\p$. Let $\qreg = (C,L'_\rho, s'_\rho,c'_m)$ denote the regular extension of $\q$, constructed in \Cref{def: regular extension}.

    Given a rational irreducible curve $T\simeq \bbP^1$, a point $\p_T\in T$ and sections $t_\rho\in H^0(T,\ccO_{T}(\betap\cdot [D_\rho]))$ such that $t_\rho(\p_T) = s'_\rho(\p)$ for all $\rho \in \Sigma(1)$, we can construct a new quasimap
    \[
        \qext = (\tilde{C},\tilde{L}_\rho, \tilde{s}_\rho,\tilde{c}_m)
    \]
    of class $\beta$ by \textit{grafting $T$ on to $\q$ at $\p$} as follows:
    \begin{itemize}
        \item The curve $\tilde{C}$ is obtained by gluing $C$ and $T$ along $\p\in C$ and $\p_T\in T$. By abuse of notation, we denote the resulting node in $\tilde{C}$ also by $\p$. 
        
        \item For each $\rho \in \Sigma_X(1)$, the line bundle $\tilde{L}_\rho$ on $\tilde{C}$ is obtained by gluing the line bundle $L'_\rho$ on $C$ and the line bundle $\ccO_{T} (\betap \cdot [D_\rho])$ on $T$. 

        \item The sections $\tilde{s}_\rho$ of $\tilde{L}_\rho$ are obtained by gluing the sections $t_\rho$ and $s'_\rho$. 
        
        \item Finally, for each $m\in M$, the isomorphisms $\tilde{c}_m$ on $\tilde{C}$ are obtained by gluing the isomorphisms $c'_m\colon \otimes_{\rho\in\Sigma(1)} {L'}_\rho^{\otimes \langle m,u_\rho\rangle} \simeq \ccO_C$ and the isomorphisms $\psi_{m,\betap} \colon \ccO_{T}(\sum_{\rho\in\Sigma(1)}\ \betap\cdot [D_\rho]\ \scalar{m,u_\rho}) \simeq \ccO_{T}$ analogous to those constructed in \Cref{rmk: compatible isos}. 
    \end{itemize}
    The defining data of $\qext$ automatically satisfies generically non-degeneracy. This is clear on $C$, while on $T$ it follows from the fact that $\qreg$ is non-degenerate at $\p$. 
\end{construction}

\begin{remark}
    With the notations of \Cref{constr: main construction for surjectivity}, the regular extension $\qreg$ of a quasimap $\q$ has the following property: a section  $s_\rho$ is identically zero on an irreducible component $C'$ of $C$ if and only if $s'_\rho$ is identically zero on $C'$. 
    
    It follows that, for each basepoint $\p$ of $\q$, we can find sections $t_\rho$ satisfying the assumptions of \Cref{constr: main construction for surjectivity}. Indeed, we can choose $t_\rho$ as follows
    \begin{itemize}
        \item if $\betap\cdot [D_\rho] < 0$, we must take $t_\rho=0$ and the gluing condition holds because $s'_\rho(x)=0$ by \Cref{cor: expression regular extension},
        \item if $\betap\cdot [D_\rho] = 0$ then we must take $t_\rho$ constant with value $s'_\rho(\p)$ and
        \item if $\betap\cdot [D_\rho] > 0$ then we can choose any section $t_\rho\in H^0(T,\ccO_{T}(\betap\cdot [D_\rho]))$ with $t_\rho(\p_T)=s'_\rho(\p)$.
    \end{itemize}
\end{remark}

\begin{remark}\label{rmk: grafting prunning}
    Grafting can be undone. With the notations of \Cref{constr: main construction for surjectivity}, we can recover $\q$ from $\qext$ by contracting $T$, restricting the rest of the data to $C$ and twisting with the degree $\beta_{T}$ of $\qext$ on $T$ analogously to \Cref{prop: description css and c_X}. In that case, we say that $\q$ is obtained by \textit{pruning $\qext$ along $T$}. By \Cref{prop: description css and c_X}, pruning all the rational tails of a stable map $f\colon C \to X$ we recover the quasimap $c_X(f)$.
\end{remark}

\begin{remark}\label{rmk: extend_grafting}
    \Cref{constr: main construction for surjectivity} can be extended to quasimaps $\q$ with more than one basepoints. The grafting of $T$ on to $\q$ at a basepoint $\p$, is obtained by replacing in \Cref{constr: main construction for surjectivity} the map $\qreg$ by the quasimap obtained by extending $\q$ only at $\p$. The description of such quasimap can be obtained using \Cref{cor: expression regular extension}, replacing $B$ in the formulas by the singleton $\{\p\}$. Note that if we graft a quasimap $\q$ along all its basepoints $\p_1,\ldots, \p_\ell$, the basepoints of the resulting quasimap $\qext$ must lie on the grafted rational curves $T_1,\ldots, T_\ell$.
\end{remark}

\subsection{Proof of surjectivity}\label{subsec: proof surjectivity}

\begin{proof}[Proof of theorem \ref{thm: surjectivity Fano}]
    The morphism $c_X\colon \cssgn\to \ccQ_{g,n}(X,\beta)$ is locally of finite type, therefore by \cite[Lemma 0487]{stacks-project} it is enough to show that $c_X$ is surjective on closed points. 
    
    Let $\q = (C,L_\rho, s_\rho,c_m)$ be a 
    stable quasimap to $X$ in $\Qgn(X,\beta)$. We show by induction on $\lambda(\beta)$ (see \Cref{def:length}) that there exists a stable map $f$ to $X$ in $\Mgn(X,\beta)$ with $c_X(f)=\q$.

    If $\lambda(\beta) = 0$, then there are no basepoints by \Cref{prop: properties degree of a basepoint}. This means that $\q$ is itself a stable map and there is nothing to show.

    If $C$ has basepoints $\p_1,\ldots, \p_\ell$, grafting an irreducible rational curve $T_i$ at each basepoint $\p_i$ induces an equality of curve classes
    \begin{equation}\label{eq: factorization ell tails}
        \beta = \sum_{i=1}^\ell \beta_{\p_i} + (\beta - \sum_{i=1}^\ell \beta_{\p_i}),
    \end{equation}
    where $\beta_{\p_i}\neq 0$ for all $i$ by \Cref{prop: properties degree of a basepoint}. If $\ell \geq 2$, \Cref{eq: factorization ell tails} is a factorization, and we can use \Cref{rmk: length_properties} to conclude by induction. 
    
    Therefore, we can restrict to the case that $\lambda(\beta) > 0$ and $C$ has only one basepoint $\p$. Furthermore, by restricting ourselves to the irreducible component containing the basepoint $\p$, we can assume that $C$ is irreducible.  Let $\qext$ be obtained by grafting an irreducible rational curve $T_1$ to $\q$ at $\p$. Then we get an expression
    \[
        \beta = \betap + (\beta - \betap)
    \]
    with $\betap\neq 0$. If $\beta - \betap$ is also non-zero, we conclude by induction on $\lambda$. Thus, we can assume that $\beta = \betap$. 
    
    In other words, we only need to deal with the particular case that on the new curve $C\cup T_1$ the degree is 0 on $C$ and $\beta$ on $T_1$. The strategy is to continue grafting irreducible rational curves to $\qext$ at the new basepoints, which necessarily lie on $T_1$ (see \Cref{rmk: extend_grafting}). Again, the only problematic case is if there is a unique basepoint $x_1$ of $\qext$ on $T_1$ such that $\beta_{x_1} = \beta$, because we could enter a loop and end up with an infinite chain of $\bbP^1$'s. To rule this out, we use \Cref{lemma: key lemma} as follows: when grafting $T_1$ to $\q$, we are allowed to choose the sections $t_\rho$ on $T_1$ (satisfying the conditions in \Cref{constr: main construction for surjectivity}). If we show that we can choose the sections $t_\rho$ with the property that
    \begin{equation}\label{eq: surjectivity win condition}
        \text{there is } t_\rho \neq 0 \text{ such that } \ord_{x_1} (t_\rho) < \beta \cdot D_\rho,
    \end{equation}
    then \Cref{lemma: key lemma} (see also \Cref{rmk: grafting prunning}), applied to $C_1 = C\cup T_1$ and $C_2 = T_2$, ensures that when we graft $T_2$ we cannot have $\beta_{T_1} = 0$ and $\beta_{T_2} = \beta$, and so we conclude by induction.

    To get \eqref{eq: surjectivity win condition} for a specific class $\beta$, it suffices to show one of the following:
    \begin{equation}\label{item: win condition 2}
        \text{there exists } \rho \in \Sigma(1) \text{ such that } s_\rho(\p)=0, s_\rho\neq 0 \text{ and } \beta\cdot D_\rho \geq 2,
    \end{equation}
    or
    \begin{equation}\label{item: win condition 11}
         \text{ there exist } \rho \neq \rho' \in \Sigma(1) \text{ such that } \beta\cdot D_\rho = \beta\cdot D_{\rho'} =1 \text{ and at most one is } 0 \text{ on } \compp.
    \end{equation}
    In both cases, it is clear we can choose $t_\rho$ on $T_1$ to have simple disjoint zeroes, which ensures \eqref{eq: surjectivity win condition}.

    To apply this strategy, we distinguish two cases, depending on whether $\beta$ is irreducible or not. In the latter case we shall use that $X$ is Fano, which is not needed in the former case.

    Firstly, we assume that $C$ is irreducible and has a unique basepoint $\p$, that $\beta$ is irreducible with $\lambda(\beta) > 0$ and that $\betap = \beta$. Since $\beta$ is irreducible, it can be represented by the closure of the toric orbit associated to a wall $\tau \in \Sigma$; that is, $\beta = [V(\tau)]$. 
    By \cite[Proposition 6.4.4]{CLS}, there exist two distinct rays $\rho_1,\rho_2\in \Sigma(1)$ such that $\beta \cdot D_{\rho_i} = 1$ for $i=1,2$.
    By \Cref{lem: exists nonvanishing section}, there is $\rho \in \Sigma(1)$ such that $s_\rho(\p) = 0$, $s_\rho\neq 0$ and $\beta \cdot D_\rho > 0$.
    If $\beta \cdot D_\rho \geq 2$, we are done by \eqref{item: win condition 2}.
    Otherwise,
    either $\{\rho,\rho_{1}\}$ or $\{\rho,\rho_{2}\}$
    satisfy \eqref{item: win condition 11}.
        
    Finally, we deal with the case where $\beta$ is not irreducible. We also assume, as before, that $\lambda(\beta)>0$,  
    that $C$ is irreducible, that there is a unique basepoint $\p\in C$ and that $\beta = \betap$. 

    By \Cref{lem: exists nonvanishing section}, there is $\rho \in \Sigma(1)$ such that $s_\rho(\p) = 0$, $s_\rho\neq 0$ and $\beta \cdot D_\rho > 0$.

    We number the rays in $\Sigma(1)$ as $\rho= \rho_1, \rho_2,\ldots, \rho_r$. Since $X$ is Fano, the anticanonical divisor $-K_X = \sum_{i=1}^r D_{\rho_i}$ is very ample (by \cite[Theorem 6.1.15]{CLS}).  Let $\iota\colon X \hookrightarrow \bbP^N$ be the corresponding closed embedding. Since $\beta$ is not irreducible, let $\beta = \beta_1+\beta_2$ be a factorization. Then
    \begin{equation}\label{eq: use Fano in surjectivity}
        \sum_{i=1}^r \beta \cdot D_{\rho_i} = \deg(\iota_\ast \beta) = \deg(\iota_\ast \beta_1) + \deg(\iota_\ast \beta_2) \geq 2.
    \end{equation}
    It follows that either $\beta \cdot D_\rho \geq 2$, and then we conclude by \eqref{item: win condition 2}, or there is $i\neq 1$ such that $\beta \cdot D_{\rho_1} \geq 1$, and then $\{\rho, \rho_i\}$ satisfy \eqref{item: win condition 11}.

    As a result of the induction argument, we end up with a prestable map $f$ to $X$ whose contraction is the original quasimap $\q$. If  $f$ is not stable, we can stabilize it (by contracting its irreducible components of degree 0 with no marks). The result is a stable map $f$ which lies in $\cssgn$ by construction. More precisely, one only needs to check the condition in \Cref{prop: description css and c_X}, and it follows from the following observation: in \Cref{constr: main construction for surjectivity}, we have that
    \[
        \ord_\p(\tilde{s}_\rho\mid_C) + \beta_{T_i}\cdot [D_\rho] = \ord_\p(s'_\rho\mid_C) + \betap\cdot [D_\rho] = \ord_\p(s_\rho) \geq 0.
    \]
    It is also clear that $f$ has degree $\beta$ since the total curve class is preserved by grafting.
\end{proof}

We conclude with two lemmas used in the proof of \Cref{thm: surjectivity Fano}.

\begin{lemma}\label{lemma: key lemma}
    Let $\q$ be a quasimap with underlying curve $C$ consisting of two irreducible components $C_1$ and $C_2$ glued along a point $\p$. Suppose $C_2$ has genus 0.
    Let $\qone$ and $\qtwo$ be the restrictions of $\q$ to $C_1$ and $C_2$ respectively. Assume $\beta_{C_1}=0$ and $\beta_{C_2}\neq 0$. Let $\qpr$ be the quasimap obtained
    by pruning $\q$ along $C_2$.
    Then for every $\rho\in\Sigma(1)$ such that $s'_\rho\neq 0$, or equivalently such that $s_\rho\mid_{C_1}\neq 0$, the following holds
	\[
		\ord_\p(s'_\rho) = \beta_{C_2}\cdot D_\rho = \beta'\cdot D_\rho.
	\]
\end{lemma}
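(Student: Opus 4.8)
The plan is to unwind the pruning construction, reduce the statement to one about orders of vanishing on the single component $C_1$, and then use the hypothesis $\beta_{C_1}=0$ through an elementary fact about degree-zero line bundles. First I would note that $\p$, being the node at which $C_1$ and $C_2$ meet inside the nodal curve $C$, is a smooth point of $C_1$, so that $\ord_\p(-)$ on $C_1$ makes sense; fix a local coordinate $\coord$ at $\p$ on $C_1$. Then I would make explicit what pruning along $C_2$ does: by \Cref{rmk: grafting prunning}, which inverts the grafting of \Cref{constr: main construction for surjectivity} (compare the contraction formulas of \Cref{prop: description css and c_X}), the quasimap $\qpr$ has underlying curve $C_1$, and its line bundle--section pairs are obtained from $L_\rho\mid_{C_1}$ and $s_\rho\mid_{C_1}$ by twisting at $\p$:
\[
    L'_\rho = L_\rho\mid_{C_1}\otimes \ccO_{C_1}\big((\beta_{C_2}\cdot D_\rho)\,\p\big), \qquad s'_\rho = s_\rho\mid_{C_1}\otimes \coord^{\,\beta_{C_2}\cdot D_\rho}.
\]
In particular $s'_\rho\neq 0$ if and only if $s_\rho\mid_{C_1}\neq 0$, which is the equivalence claimed in the statement. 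The point that needs care here is the sign of the twist: grafting adds $+\betap$ on the grafted component (cf.\ \Cref{cor: expression regular extension}), so pruning adds $+\beta_{C_2}\cdot D_\rho$ on the surviving component $C_1$, not $-\beta_{C_2}\cdot D_\rho$.

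For the main step I would use that $\beta_{C_1}=0$ forces $\deg(L_\rho\mid_{C_1}) = \beta_{C_1}\cdot D_\rho = 0$ for every $\rho$, and that on the integral projective curve $C_1$ a non-zero section of a degree-zero line bundle is nowhere vanishing (its divisor of zeros is an effective divisor of degree $0$, hence empty; alternatively pull back to the normalization). Hence for every $\rho$ with $s_\rho\mid_{C_1}\neq 0$ we get $\ord_\p(s_\rho\mid_{C_1})=0$, and substituting into the formula for $s'_\rho$ gives $\ord_\p(s'_\rho)=\beta_{C_2}\cdot D_\rho$. As a by-product this forces $\beta_{C_2}\cdot D_\rho\geq 0$ for such $\rho$, so the right-hand side is a genuine order of vanishing (and the pruning is automatically well defined in this situation).

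Finally, writing $\beta'$ for the degree of $\qpr$, the equality $\beta_{C_2}\cdot D_\rho = \beta'\cdot D_\rho$ is a degree count: $\beta'\cdot D_\rho = \deg L'_\rho = \deg(L_\rho\mid_{C_1}) + \beta_{C_2}\cdot D_\rho = 0 + \beta_{C_2}\cdot D_\rho$, or, equivalently, $\beta' = \beta_{C_1}+\beta_{C_2} = \beta_{C_2}$ since grafting and pruning preserve the total curve class. I do not expect a real obstacle: the only two things demanding attention are getting the sign of the pruning twist correct and the standard fact that a non-zero section of a degree-zero line bundle on an integral projective curve does not vanish.
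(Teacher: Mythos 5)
Your proof is correct and takes essentially the same approach as the paper's: write out the pruning twist $s'_\rho = s_\rho\mid_{C_1}\cdot \coord^{\beta_{C_2}\cdot D_\rho}$ and use $\beta_{C_1}=0$ to conclude $\ord_\p(s_\rho\mid_{C_1})=0$, hence $\ord_\p(s'_\rho)=\beta_{C_2}\cdot D_\rho=\beta'\cdot D_\rho$. You merely make explicit two steps the paper leaves implicit, namely the fact that a non-zero section of a degree-zero line bundle on the integral curve $C_1$ is nowhere vanishing, and the additivity $\beta'=\beta_{C_1}+\beta_{C_2}=\beta_{C_2}$.
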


\begin{proof}
    By definition, 
    \[
        s'_{\rho} =s_\rho\mid_{C_1} \cdot \coord^{\beta_{C_2}\cdot D_\rho},
    \]
    with $\coord$ a local coordinate at $\p$ inside $C_1$. Therefore, $s'_\rho= 0$ if and only if $s_\rho\mid_{C_1}= 0$. Furthermore, if $s_\rho\mid_{C_1}\neq 0$ then $\ord_\p(s_\rho\mid_{C_1}) = 0$ because $\beta_{C_1}=0$, and $\ord_\p(s'_\rho) = \beta_{C_2}\cdot D_\rho= \beta'\cdot D_\rho$. 
\end{proof}

\begin{lemma}\label{lem: exists nonvanishing section}
    Let $\q = (C,L_\rho, s_\rho,c_m)$ be a 
    stable quasimap to $X$ of degree $\beta$ with $C$ irreducible and let $\p$ be a basepoint of $\q$. Then there exists $\rho \in \Sigma(1)$ such that $s_\rho(\p) = 0$ but $s_\rho \neq 0$. In particular, $\beta \cdot D_\rho > 0$. 
\end{lemma}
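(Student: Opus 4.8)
The plan is to argue by contradiction, extracting everything from the combinatorics of the quasimap non-degeneracy condition together with irreducibility of $C$. Since $C$ is irreducible we have $\compp = C$, so $\vanishp$ is simply the set of rays $\rho$ for which $s_\rho$ vanishes identically on $C$. Set $Z \coloneqq \{\rho\in\Sigma(1)\colon s_\rho(\p)=0\}$; then $\vanishp\subseteq Z$, and the assertion of the lemma is exactly that this inclusion is strict, i.e.\ $Z\setminus\vanishp\neq\emptyset$.

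First I would record two consequences of non-degeneracy. Generic non-degeneracy forces $\vanishp\subseteq\sigma(1)$ for some maximal cone $\sigma$: a general point $\p'\in C$ lies outside the finite basepoint set, so there is a maximal cone $\sigma$ with $s_\rho(\p')\neq 0$ for all $\rho\notin\sigma(1)$, and any $\rho\in\vanishp$ satisfies $s_\rho(\p')=0$, hence $\rho\in\sigma(1)$. Next, unwinding \Cref{def: quasimap}: the finite set $B$ of basepoints may be chosen minimal, so $\p$ is a basepoint precisely when there is \emph{no} maximal cone $\sigma'$ with $s_\rho(\p)\neq 0$ for all $\rho\notin\sigma'(1)$ --- equivalently, precisely when $Z$ is contained in $\sigma'(1)$ for no maximal cone $\sigma'$.

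Combining the two: if $Z=\vanishp$, then $Z\subseteq\sigma(1)$ for the maximal cone $\sigma$ produced above, contradicting that $\p$ is a basepoint. Hence there is $\rho\in Z\setminus\vanishp$, i.e.\ $s_\rho(\p)=0$ and $s_\rho\neq 0$, which is the first claim. For the ``in particular'' clause, such an $s_\rho$ is a nonzero global section of $L_\rho$ on the irreducible projective curve $C$ vanishing at the smooth point $\p$, so $\deg L_\rho \geq \ord_\p(s_\rho)\geq 1$; since $\beta\cdot D_\rho=\deg L_\rho$ by \Cref{def: degree quasimap}, we conclude $\beta\cdot D_\rho>0$.

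I do not expect a serious obstacle here. The only point that needs care is the faithful translation of ``$\p$ is a basepoint'' into ``$Z$ lies in no maximal cone'', which uses that the set $B$ in \Cref{def: quasimap} can be taken minimal; one should also note that stability of $\q$ plays no role in the argument.
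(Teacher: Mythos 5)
Your proof is correct, and it takes a mildly different route from the paper's. The paper argues via the irreducible decomposition of the unstable locus $Z(\Sigma)$ into primitive collections (\Cref{eq: Z Sigma primitive collections}): since $\p$ is a basepoint, the sections indexed by some primitive collection $\ccP$ all vanish at $\p$; because $\ccP$ is contained in no cone while $\vanishp$ is contained in $\sigma(1)$ for some maximal cone $\sigma$ (generic non-degeneracy), some $s_\rho$ with $\rho\in\ccP$ is not identically zero, and then $\deg L_\rho>0$ exactly as in your last step. You bypass primitive collections entirely and work directly with the definition of non-degeneracy: $\p$ being a basepoint means $Z=\{\rho\colon s_\rho(\p)=0\}$ is contained in $\sigma'(1)$ for no maximal cone $\sigma'$, whereas $\vanishp$ does lie in some $\sigma(1)$ (this is recorded as a remark in \Cref{sec: basepoint class}), so $Z\setminus\vanishp\neq\emptyset$. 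The two arguments are repackagings of the same combinatorial fact; yours is marginally more self-contained (no appeal to the primitive-collection description of $Z(\Sigma)$), while the paper's is a one-line application of that standard structural statement. Your explicit point that ``basepoint'' must mean the minimal set $B$, i.e.\ a point where non-degeneracy genuinely fails, is the convention the paper uses implicitly in its own proof, and your observation that stability plays no role also matches the paper. The closing degree estimate ($s_\rho$ a nonzero section vanishing at the nonsingular point $\p$ of the irreducible curve $C$, hence $\deg L_\rho\geq 1$ and $\beta\cdot D_\rho>0$ by \Cref{def: degree quasimap}) is the same in both.
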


\begin{proof}
    Since $\p$ is a basepoint, by \Cref{eq: Z Sigma primitive collections} there is a primitive collection $\ccP\subseteq \Sigma(1)$ such that $s_{\rho}(x) = 0$ for all $\rho\in \ccP$. By generic non-degeneracy, there exists $\rho \in \ccP$ such that $s_{\rho}$ is not identically 0. The existence of $s_\rho$ ensures that $\beta \cdot D_\rho = \deg(L_{\rho})> 0$. 
\end{proof}

\begin{remark}\label{rmk: relax Fano in surjectivity}
    \Cref{thm: surjectivity Fano} also holds for $X$ smooth projective toric with the following property: for each effective non-irreducible non-zero curve class $\beta \in A_1(X)$ such that there exists $\rho \in \Sigma(1)$ with $\beta \cdot D_\rho = 1$, there exists $\rho'\neq \rho$ such that $\beta \cdot D_{\rho'} > 0$. 

    This condition is true if $X$ is Fano by the argument involving \Cref{eq: use Fano in surjectivity}.

    A way to control this condition is to look at 
    \[
        \sum_{\rho \in \Sigma(1)} \beta \cdot D_\rho = \beta \cdot (-K_X).
    \]
    For example, the above condition holds if $\-K_X\cdot \beta \geq 2$ for every effective non-zero curve class $\beta \in A_1(X)$. But, in that case, $X$ is Fano as $-K_X$ is very ample by \cite[Theorems 6.1.15 and 6.3.22]{CLS}.
\end{remark}

\bibliographystyle{alpha}
\bibliography{bibliography}

\end{document}